\numberwithin{equation}{section}
\begin{document}
\newtheorem{conjecture}{Conjecture}
\newtheorem{theorem}{Theorem}
\newtheorem{lemma}{Lemma}
\newtheorem{corollary}{Corollary}
\numberwithin{equation}{section}
\newcommand{\dif}{\mathrm{d}}
\newcommand{\intz}{\mathbb{Z}}
\newcommand{\ratq}{\mathbb{Q}}
\newcommand{\natn}{\mathbb{N}}
\newcommand{\comc}{\mathbb{C}}
\newcommand{\rear}{\mathbb{R}}
\newcommand{\prip}{\mathbb{P}}
\newcommand{\uph}{\mathbb{H}}
\newcommand{\fief}{\mathbb{F}}
\newcommand{\majorarc}{\mathfrak{M}}
\newcommand{\minorarc}{\mathfrak{m}}
\newcommand{\sings}{\mathfrak{S}}

\title{On Hecke eigenvalues at Piatetski-Shapiro primes}
\date{\today}
\author{Stephan Baier \and Liangyi Zhao}
\maketitle

\begin{abstract}
Let $\lambda(n)$ be the normalized $n$-th Fourier coefficient of a holomorphic cusp form for the full modular group. We show that for some constant $C>0$ depending on the cusp form and every fixed $c$ in the range $1<c<8/7$, the mean value of $\lambda(p)$ is $\ll \exp(-C\sqrt{\log N})$ as $p$ runs over all (Piatetski-Shapiro) primes of the form $\left[n^c\right]$ with $n\in \mathbbm{N}$ and $n\le N$.
\end{abstract}

\noindent {\bf Mathematics Subject Classification (2000)}: 11F11, 11F30, 11F60, 11L03, 11L07, 11L20.\newline

\noindent {\bf Keywords}: Hecke eigenvalues, Piatetski-Shapiro primes

\section{Introduction}

Let $f$ be a holomorphic cusp form of weight $\kappa$ for the full modular group. By $\lambda_f(n)$ we denote the normalized $n$-th Fourier coefficient of $f$, {\it i.e.}
$$
f(z)=\sum\limits_{n=1}^{\infty} \lambda_f(n)n^{(\kappa-1)/2}e(nz)
$$
for $\Im z>0$.  The Ramanujan-Petersson conjecture, proved by P. Deligne \cite{Deli, Deli2}, states that $\lambda_f(n)\ll n^{\varepsilon}$ for any fixed $\varepsilon>0$. More precisely, we have
\begin{equation} \label{Ramanujanpetersson}
 \lambda_f(n) \ll d(n) \ll n^{\varepsilon},
\end{equation}
where $d(n)$ is the number of divisors of $n$.  If we assume, in addition, that $f$ is an eigenform of all the Hecke operators, then $f$ can be normalized such that $\lambda_f(1)=1$ and with this normalization the implied constant in the first ``$\ll$'' in \eqref{Ramanujanpetersson} can be taken to be 1. \newline

The distribution of Fourier coefficients of cusp forms has received a lot of attention. It is due to G. H. Hardy and S. Ramanujan that
\[ \sum_{n \leq N} \lambda_f (n) e ( \alpha n ) \ll N^{1/2} \log (2N), \]
and it follows from a general formula of K. Chandrasekharan and N. Narasimhan \cite{ChNa} that
\[ \sum_{n \leq N} \lambda_f (n) \ll N^{1/3+\varepsilon}. \]
It is worth noting that more recently N. J. E. Pitt \cite{Pitt} and V. Blomer \cite{Blo} established, respectively, estimates for sums of the forms
\[ \sum_{n \leq N} \lambda(n) e \left( \alpha n^2 + \beta n \right) \; \mbox{and} \; \sum_{n \leq N} \lambda \left( n^2 + s n + t \right), \]
with $\alpha, \beta \in \rear$ and $s, t \in \intz$. \newline

Especially interesting is the distribution of Fourier coefficients of cusp form at prime arguments. It is known that (see, for example, Section 5.6 of \cite{HIEK}) there exists a positive constant $C$, depending on the cusp form $f$, such that
\begin{equation} \label{coeffatprimes}
\sum\limits_{p\le N} \lambda_f(p) \ll N \exp \left( -C \sqrt{\log N} \right),
\end{equation}
where the implied $\ll$-constant depends on the cusp form $f$. Under the generalized Riemann hypothesis for modular $L$-functions, the right-hand side of \eqref{coeffatprimes} can be replaced by $N^{1/2+\varepsilon}$. M. R. Murty \cite{MRM} conjectured an $\Omega$-result of the form
\[ \sum_{p \leq N} \lambda_f (p) = \Omega_{\pm} \left( \frac{\sqrt{N} \log \log \log N}{\log N} \right) \]
and succeeded in proving it provided some $L$-function has no real zero between 1/2 and 1.  S. D. Adhikari \cite{SDA} generalized this result to cusp forms for the group $\Gamma_0(N)$.
The second-named author of the present paper investigated special exponential sums with Fourier coefficients of cusp forms over primes \cite{LZ1}, motivated by some surprising heuristic due to H. Iwaniec, W. Luo and P. Sarnak \cite{ILS} which gives that there should not be square-root cancellation in these exponential sums. \newline

There is a more precise conjecture than \eqref{coeffatprimes} on the distribution of the $\lambda_f(p)$'s, known as the Sato-Tate conjecture.  This conjecture states that if $f$ is a primitive holomorphic cusp form of weight greater than $2$ which is not of dihedral type, then the coefficients $\lambda_f(p)$ follow a certain distribution law. For the details, see \cite{HIEK}, Chapter 21. \newline

In the present paper, we investigate sums of Fourier coefficients of cusp forms over certain {\it sparse} sets of primes, namely Piatetski-Shapiro primes which we will discuss below. The motivation for our investigation is two-fold. First, the mean-values of arithmetic functions (in particular, of Fourier coefficients of cusp forms) over sparse sequences are often difficult to handle and thus of great interest.  The work of V. Blomer \cite{Blo} is in this direction. Second, it is a hard problem to detect primes in arithmetically interesting sets of natural numbers that are {\it sparse}.  Recently, there has been much progress with regard to problems of this type. J. B. Friedlander and H. Iwaniec \cite{FrIw} established the celebrated result that there are infinitely many primes of the form $X^2+Y^4$ with $X,Y \in \mathbbm{N}$.  D. R. Heath-Brown \cite{Hea1} proved the infinitude of the set of primes of the form $X^3+2Y^3$ with $X,Y\in \mathbbm{N}$.  \newline

A classical result in the direction of finding primes in sparse sequences is the Piatetski-Shapiro prime number theorem which states that there exists $c>1$ such that there are infinitely many primes of the form $\left[n^c\right]$ with $n\in \mathbbm{N}$, where $[ x ]$ denotes the integral part of $x$. More precisely, I. I. Piatetski-Shapiro \cite{Piat} proved that
\begin{equation} \label{piatshap}
\left|\{n\le N\ :\ \left[n^c\right] \mbox{ is a prime number}\}\right| \sim \frac{N}{c\log N}, \; \mbox{ as } N\rightarrow\infty
\end{equation}
if $c$ is a fixed number lying in the range $1<c<12/11$. This $c$-range for which \eqref{piatshap} holds has been widened by many authors (see \cites{Hea2, Klo1, Klo2, LiRi, Riv}). The most recent record is due to J. Rivat and P. Sargos \cite{RiSa} who proved that \eqref{piatshap} holds in the range $1<c< 2817/2426$. Lower bounds of the correct order of magnitude for the quantity on the left-hand side of \eqref{piatshap} were established by several authors (see \cites{Jia1, Jia2}) for wider $c$-ranges.  It is conjectured that the asymptotic formula in \eqref{piatshap} holds for all non-integers $c>1$. (Note that for $0 < c \leq 1$, \eqref{piatshap} follows easily from partial summation and the prime number theorem.) D. Leitman and D. Wolke \cite{LeWo} showed that \eqref{piatshap} holds for almost all, with respect to Lebesgue measure, $c$ with $1<c<2$.  Moreover, it was due to J.-M. Deshouillers \cite{Des} that the left-hand side of \eqref{piatshap} tends to infinity as $n$ tends to infinity for almost all $c >1$, with respect to Lebesgue measure.\newline

The main result of this paper is the following.

\begin{theorem} \label{mainresult}
Let $1<c<8/7$ and $\lambda_f(n)$ be the normalized $n$-th Fourier coefficient of a holomorphic cusp form $f$ for the full modular group.  Let $\mathbbm{P}$ denote the set of primes. Then there exists a constant $C>0$ depending on $f$ such that
\begin{equation} \label{themainresult}
\sum\limits_{\substack{n\le N\\ [n^c]\in \mathbbm{P}}} \lambda_f\left(\left[n^c\right]\right) \ll N\exp \left( -C \sqrt{\log N} \right),
\end{equation}
where the implied $\ll$-constant depends on $c$ and the cusp form $f$.
\end{theorem}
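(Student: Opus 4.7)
The starting point is to reindex the sum so that it runs over primes $p = [n^c]$. Setting $\gamma = 1/c$, one has the elementary identity
$$\#\{n\in\mathbbm{N} : [n^c] = p\} = \lfloor -p^{\gamma} \rfloor - \lfloor -(p+1)^{\gamma} \rfloor = (p+1)^{\gamma} - p^{\gamma} + \psi(-p^{\gamma}) - \psi(-(p+1)^{\gamma}),$$
where $\psi(x) = x - \lfloor x \rfloor - 1/2$. Hence the left-hand side of \eqref{themainresult} splits as $S_1 + S_2$, where $S_1$ carries the smooth weight $(p+1)^{\gamma} - p^{\gamma} \asymp \gamma p^{\gamma-1}$ and $S_2$ carries the fractional-part discrepancies. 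The piece $S_1$ is dispatched by partial summation from the bound \eqref{coeffatprimes}, which already contributes $\ll N \exp(-C\sqrt{\log N})$. Thus everything hinges on estimating $S_2$.

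To bound $S_2$, I replace $\psi$ by Vaaler's trigonometric polynomial approximation of degree $H$, whose pointwise error is dominated by a nonnegative trigonometric polynomial of the same degree and therefore contributes only a tolerable bias upon averaging. The problem then reduces to proving, uniformly in $1 \leq h \leq H$, a power-saving bound on the weighted prime exponential sum
$$\mathcal{E}(h;P) := \sum_{p \sim P} \lambda_f(p) e(h p^{\gamma}), \qquad P \leq N^c,$$
so that, after summation over $h$ (weighted by $1/h$), the total saving defeats the exponential factor in the target estimate.

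To detect the primes, I apply Vaughan's identity to $\mathcal{E}(h;P)$, decomposing it, up to a factor $\log^{O(1)}P$, into Type~I sums of the form $\sum_{m \leq M} a_m \sum_n \lambda_f(mn) e(h(mn)^{\gamma})$ and Type~II sums of the form $\sum_{m \asymp M} \sum_{n \asymp P/M} \alpha_m \beta_n \lambda_f(mn) e(h(mn)^{\gamma})$ with general bounded coefficients. For Type~I sums the inner variable is long enough to apply the Voronoi summation formula for $\lambda_f$, producing a dual sum with a Bessel-type kernel that is handled by stationary-phase analysis and standard exponent-pair estimates. For Type~II sums, after restricting to the coprime contribution (the non-coprime part being negligible by the Deligne bound \eqref{Ramanujanpetersson} and the Rankin--Selberg estimate $\sum_{n\leq x}|\lambda_f(n)|^2 \ll x$) so that $\lambda_f(mn) = \lambda_f(m)\lambda_f(n)$, I apply Cauchy--Schwarz in $m$ to extract $\lambda_f(m)$, open the square, and then run van der Corput's $A$-process in $n$ to exploit cancellation in the shifted phase $h[(mn_1)^{\gamma} - (mn_2)^{\gamma}]$.

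The principal obstacle is the joint optimisation of the Vaughan thresholds $(U,V)$ and the Fourier height $H$ against the exponent pair used in the van der Corput step. The Type~II bound is the binding constraint, and the boundary of admissibility produced by a single round of differencing corresponds precisely to $c = 8/7$; pushing past it would demand either a more elaborate combinatorial identity (e.g.\ Heath-Brown's) or the use of two-dimensional exponent pairs, neither of which I pursue. For $1 < c < 8/7$, the resulting power-saving in $\mathcal{E}(h;P)$ combined with a dyadic decomposition in $P$ and summation over $h$ yields the required decay $N\exp(-C\sqrt{\log N})$ in \eqref{themainresult}.
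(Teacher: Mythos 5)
Your skeleton --- reindexing via $\left[-p^{\gamma}\right]-\left[-(p+1)^{\gamma}\right]$, splitting off the smooth weight, Vaaler's approximation, a combinatorial prime-detecting identity, and a Type~I/Type~II dichotomy with a Voronoi-type dual for the sums carrying $\lambda_f$ --- matches the paper's. But there is a genuine gap at exactly the step that determines the exponent $8/7$. For the Type~I sums you propose Voronoi summation on the long inner variable followed by stationary phase and ``standard exponent-pair estimates,'' with the outer variables apparently summed trivially. This is precisely the strategy the paper rules out as insufficient: applying Jutila's long-sum estimate (the packaged form of Voronoi plus stationary phase for $\sum_n\lambda(n)e(tn^{\gamma})$, Lemma~\ref{expsumhecke}) to the inner sum and summing over $h$ and $m$ trivially only handles $Y\gg N^{8/3-2\gamma}$, which forces $\gamma>8/9$, i.e.\ $c<9/8$, not $c<8/7$. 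To go further one must exploit the averaging over $h$, $m$ and the rational approximations $l/k$ \emph{after} the transformation: the paper dissects the inner range into short Farey segments, transforms each by Lemma~\ref{transformlemma}, applies Cauchy--Schwarz across all outer variables, and controls the near-coincidences of the dual phases by a spacing lemma for monomial points (Lemmas~\ref{cardlemma} and~\ref{spacinglemma}) before invoking van der Corput. None of this is in your plan, and without it the ``joint optimisation'' cannot reach $8/7$.

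Two further points. You locate the binding constraint in the Type~II bound, but in the paper the Type~II estimate (Lemma~\ref{Ldcondlemma}) only needs $\gamma>13/15$; the condition $\gamma>7/8$, i.e.\ $c<8/7$, comes from making the two Type~I ranges overlap (Lemmas~\ref{Kdcondlemma0} and~\ref{Kdcondlemma}). Relatedly, you defer Heath-Brown's identity to a hypothetical improvement beyond $8/7$, whereas the paper already relies on its flexibility (Lemma~\ref{Heath}) to arrange the admissible $X,Y$-ranges at $8/7$; that Vaughan's identity alone suffices is asserted, not checked. Finally, the non-coprime part of $\lambda_f(mn)$ is not negligible: the contribution of pairs with $d\mid\gcd(m,n)$ has trivial bound $\asymp HN/d^{2}$, which for small $d\ge 2$ exceeds the target since $H$ may be as large as $N^{1-\gamma+\eta}$. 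One must therefore carry the M\"obius-inverted multiplicativity through the whole argument and prove each estimate with a $d^{-1}$ saving uniformly in $d$, as in Lemma~\ref{bilinearsums}, rather than discard these terms by appeal to Deligne and Rankin--Selberg.
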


We note that, by \eqref{piatshap} and $8/7=1.142...<1.161...=2817/2426$ (recall that Sargos and Rivat established \eqref{piatshap} for the range $1<c<2817/2426$), the right-hand side of \eqref{themainresult} is small compared to the total number of Piatetski-Shapiro primes of the form $[n^c]$ with $n\le N$ if $c$ lies in the range given in Theorem \ref{mainresult}. \newline

An even harder problem is the question how the absolute values of the Fourier coefficients of cusp forms are distributed at Piatetski-Shapiro primes. For the full set of primes, one has the following results. If $f$ is a normalized Hecke eigenform, then, similarly as the prime number theorem, it can be established by using the analytic properties of the Rankin-Selberg $L$-function $L(f\otimes \overline{f},s)$ that
\begin{equation} \label{squaremean}
 \sum\limits_{p\le N} |\lambda_f(p)|^2 \sim \frac{N}{\log N}. 
\end{equation}
For general cusp forms $f$, one has
\begin{equation} \label{squaremeangen}
 \sum\limits_{p\le N} |\lambda_f(p)|^2 \sim c_f \frac{N}{\log N}
 \end{equation}
as $N\rightarrow \infty$, where $c_f$ is some positive constant depending on $f$. To see this, write $f$ as a linear combination of Hecke eigenforms and thus $\lambda_f(p)$ as a linear combination of the corresponding Fourier coefficients of these Hecke eigenforms, multiply out the modulus square, and use \eqref{squaremean} together with the similarly established fact that
\begin{equation} \label{}
 \sum\limits_{p\le N} \lambda_f(p)\overline{\lambda_g(p)} = o\left(\frac{N}{\log N}\right)
\end{equation}
if $f$ and $g$ are linearly independent Hecke eigenforms.   We conjecture that a result analogous to \eqref{squaremeangen} holds for Piatetski-Shapiro primes.

\begin{conjecture} \label{conj}
Under the assumptions of Theorem \ref{mainresult}, there exists a constant $c_f>0$ such that
\[
\sum\limits_{\substack{n\le N\\ [n^c]\in \mathbbm{P}}} \left|\lambda_f\left(\left[n^c\right]\right)\right|^2 \sim c_f\frac{N}{c\log N}, \; \mbox{ as } N\rightarrow\infty.
\]
\end{conjecture}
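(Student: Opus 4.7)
The plan is to parallel the proof of Theorem~\ref{mainresult}, but with the second-moment asymptotic \eqref{squaremeangen} playing the role that \eqref{coeffatprimes} plays there as the source of the main term. Writing $\gamma = 1/c$, for every $p \ge 1$ the indicator $\lfloor (p+1)^\gamma \rfloor - \lfloor p^\gamma \rfloor \in \{0,1\}$ counts the number of $n \in \mathbbm{N}$ with $[n^c]=p$, and every such $n$ with $p \le N^c$ satisfies $n \le N + O(1)$; thus
\[
\sum\limits_{\substack{n \le N \\ [n^c] \in \mathbbm{P}}} \left|\lambda_f([n^c])\right|^2 = \sum\limits_{\substack{p \le N^c \\ p \in \mathbbm{P}}} \left|\lambda_f(p)\right|^2 \bigl(\lfloor (p+1)^\gamma \rfloor - \lfloor p^\gamma \rfloor\bigr) + O(N^\varepsilon).
\]
Splitting $\lfloor x \rfloor = x - 1/2 - \psi(x)$ with $\psi(x) = \{x\}-1/2$, and Taylor-expanding $(p+1)^\gamma - p^\gamma = \gamma p^{\gamma-1} + O(p^{\gamma-2})$, isolates a smooth main term $\gamma \sum_{p \le N^c} |\lambda_f(p)|^2 p^{\gamma - 1}$; partial summation against \eqref{squaremeangen} evaluates this to $c_f N/(c\log N) + o(N/\log N)$, precisely the value conjectured. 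The problem therefore reduces to showing that the oscillatory remainder $\sum_p |\lambda_f(p)|^2 \bigl(\psi((p+1)^\gamma) - \psi(p^\gamma)\bigr)$ is of lower order.

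Following the standard Piatetski-Shapiro technology (as in \cite{RiSa}), I would approximate $\psi$ by its Vaaler truncated Fourier polynomial of length $H$, so that the remainder is controlled by uniform power-saving bounds for the twisted prime exponential sums
\[
T(h) := \sum\limits_{p \le N^c} \left|\lambda_f(p)\right|^2 e(h p^\gamma), \qquad 1 \le |h| \le H.
\]
Decomposing $f$ in the basis of normalized Hecke eigenforms $f_1, \dots, f_r$ (whose Fourier coefficients are real at level one) and applying the Hecke multiplicativity $\lambda_g(p)^2 = \lambda_g(p^2) + 1$, each $T(h)$ becomes a finite combination of three pieces: the classical Piatetski-Shapiro prime sum $\sum_{p \le N^c} e(h p^\gamma)$, already bounded in the range $1 < c < 8/7$ by the techniques of \cite{RiSa}; twisted sums $\sum_{p \le N^c} \lambda_g(p^2) e(h p^\gamma)$ whose coefficients are the Dirichlet coefficients of $L(s, \operatorname{Sym}^2 g)$; and, for $i \ne j$, off-diagonal sums $\sum_{p \le N^c} \lambda_{f_i}(p)\lambda_{f_j}(p) e(h p^\gamma)$ attached to the pole-free Rankin-Selberg $L(s, f_i \otimes f_j)$.

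The main obstacle is the second and third types of sum. Since $|\lambda_g(p^2)|$ and $|\lambda_{f_i}(p)\lambda_{f_j}(p)|$ can attain their full Ramanujan size with no average cancellation available from Deligne's estimate \eqref{Ramanujanpetersson} alone, every bit of saving must come from the oscillation of $e(h p^\gamma)$. The natural route is Vaughan's (or Heath-Brown's) identity to decompose each prime sum into Type~I and Type~II bilinear forms in a dummy variable $n$, then Voronoi summation on the arithmetic side---which for $\lambda_g(n^2)$ requires the $\mathrm{GL}_3$ Voronoi formula for $\operatorname{Sym}^2 g$, automorphic by Gelbart-Jacquet---against the analytic test function $x \mapsto e(h x^\gamma)$, followed by van der Corput $A$- and $B$-processes on the resulting phase integrals. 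The Type~II piece is the serious one: because the $\mathrm{GL}_3$ Voronoi formula loses relative to its $\mathrm{GL}_2$ counterpart, I expect any unconditional $c$-range obtainable by this route to be strictly smaller than the range $(1,8/7)$ of Theorem~\ref{mainresult}, and pinning down such an explicit range is the real substance of Conjecture~\ref{conj}.
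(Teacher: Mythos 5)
This statement is an open conjecture in the paper: the authors give no proof of it, and in their concluding ``Notes'' section they state explicitly that proving it would require ``estimates for exponential sums with squares of Hecke eigenvalues (or more generally, with Fourier coefficients of Rankin--Selberg convolutions of cusp forms)'' --- which is precisely the step your proposal does not supply. Your reduction of the main term is sound: the identity $\lfloor (p+1)^\gamma\rfloor - \lfloor p^\gamma\rfloor$ as the counting weight, the split into a smooth part plus a $\psi$-oscillation, and partial summation against \eqref{squaremeangen} do produce $c_f N/(c\log N)$, and your diagnosis of the remaining obstacle (the sums $\sum_p \lambda_g(p^2)e(hp^\gamma)$ and the off-diagonal Rankin--Selberg sums, where Deligne's bound gives no room and all saving must come from the phase) agrees exactly with the authors' own assessment of why this is hard. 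But a diagnosis is not a proof. You never establish, or even precisely state, a power-saving bound for the Type~II bilinear forms arising from the $\mathrm{GL}_3$ coefficients, and you concede in your final sentence that you cannot determine any $c$-range in which your route closes. Since the conjecture asserts the asymptotic for every $c$ in $(1,8/7)$, what you have written is a research programme whose decisive step is missing, not a proof.

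Two smaller points. First, the whole machinery of this paper that handles the Hecke-eigenvalue twist --- Jutila's transformation (Lemma~\ref{expsumtransformation}), which is a $\mathrm{GL}_2$ Voronoi-type summation for level-one holomorphic forms --- has no ready analogue for $\operatorname{Sym}^2 g$; the $\mathrm{GL}_3$ Voronoi formula you invoke has a qualitatively different (and weaker) shape, so even the Type~I analysis would need to be rebuilt from scratch, not merely ``lose relative to its $\mathrm{GL}_2$ counterpart.'' Second, your counting weight $\lfloor (p+1)^\gamma\rfloor - \lfloor p^\gamma\rfloor$ counts integers in $(p^\gamma,(p+1)^\gamma]$ rather than $[p^\gamma,(p+1)^\gamma)$; this is harmless (it affects only boundary cases), but the paper's form $[-m^\gamma]-[-(m+1)^\gamma]$ is the correct one.
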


If Conjecture \ref{conj} holds, then, using \eqref{Ramanujanpetersson}, we deduce that
$$
\sum\limits_{\substack{n\le N\\ [n^c]\in \mathbbm{P}}} \left|\lambda_f\left(\left[n^c\right]\right)\right| \gg \frac{N}{\log N}
$$
which is large compared to the right-hand side of \eqref{themainresult}. This implies the following conditional result on the oscillations of Fourier coefficients of cusp forms at Piatetski-Shapiro primes.

\begin{theorem} \label{oscillations}
Assume that the conditions of Theorem \ref{mainresult} are satisfied and Conjecture \ref{conj} holds. Then either $\Re \lambda_f(p)$ or $\Im \lambda_f(p)$ changes sign infinitely often at primes of the form $p=[n^c]$, $n\in \mathbbm{N}$.
\end{theorem}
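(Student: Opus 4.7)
The plan is to argue by contradiction, extracting an upper bound on $\sum |\lambda_f([n^c])|$ from Theorem \ref{mainresult} under the assumed sign hypothesis and comparing it with the lower bound supplied by Conjecture \ref{conj}.

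First, I would make rigorous the lower bound already sketched just before the theorem statement. Since $d(p)=2$ for any prime $p$, the Ramanujan--Petersson bound \eqref{Ramanujanpetersson} yields $|\lambda_f(p)|\le 2$, hence $|\lambda_f(p)|^2\le 2|\lambda_f(p)|$. Summing this pointwise bound over the Piatetski-Shapiro primes in the range and applying Conjecture \ref{conj} gives
\[
\sum_{\substack{n\le N\\ [n^c]\in\mathbbm{P}}}\bigl|\lambda_f\left(\left[n^c\right]\right)\bigr|\;\ge\;\frac12\sum_{\substack{n\le N\\ [n^c]\in\mathbbm{P}}}\bigl|\lambda_f\left(\left[n^c\right]\right)\bigr|^2\;\gg\;\frac{N}{\log N}.
\]

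Next, I would assume toward a contradiction that both $\Re\lambda_f(p)$ and $\Im\lambda_f(p)$ change sign only finitely often at primes of the form $p=[n^c]$. Then there exist $N_0$ and signs $\varepsilon_1,\varepsilon_2\in\{-1,+1\}$ so that $\varepsilon_1\Re\lambda_f([n^c])\ge 0$ and $\varepsilon_2\Im\lambda_f([n^c])\ge 0$ for all $n\ge N_0$ with $[n^c]$ prime. For such $n$ the trivial inequality
\[
|\lambda_f\left(\left[n^c\right]\right)|\;\le\;\bigl|\Re\lambda_f\left(\left[n^c\right]\right)\bigr|+\bigl|\Im\lambda_f\left(\left[n^c\right]\right)\bigr|\;=\;\varepsilon_1\Re\lambda_f\left(\left[n^c\right]\right)+\varepsilon_2\Im\lambda_f\left(\left[n^c\right]\right)
\]
holds. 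Summing this over $n\le N$ with $[n^c]\in\mathbbm{P}$ and absorbing the $n<N_0$ contributions into an $O(1)$ term, then invoking Theorem \ref{mainresult} on both the real and imaginary parts, I obtain
\[
\sum_{\substack{n\le N\\ [n^c]\in\mathbbm{P}}}\bigl|\lambda_f\left(\left[n^c\right]\right)\bigr|\;\le\;2\biggl|\sum_{\substack{n\le N\\ [n^c]\in\mathbbm{P}}}\lambda_f\left(\left[n^c\right]\right)\biggr|+O(1)\;\ll\;N\exp\bigl(-C\sqrt{\log N}\bigr).
\]

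Finally, since $\sqrt{\log N}$ grows faster than $\log\log N$, we have $\exp(-C\sqrt{\log N})=o(1/\log N)$, so the right-hand side of the last display is of smaller order than $N/\log N$ as $N\to\infty$. This contradicts the lower bound established in the first step, completing the proof. There is no serious obstacle: the argument is a clean pigeonhole-style deduction, and the only points to take care of are the reduction from the conjectural second moment to a first moment via the trivial bound $|\lambda_f(p)|\le 2$ and the comparison of the two rates of decay.
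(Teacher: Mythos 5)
Your proof is correct and is essentially the argument the paper itself sketches in the paragraph preceding the theorem: the second-moment lower bound from Conjecture \ref{conj} combined with the pointwise bound $|\lambda_f(p)|\ll 1$ gives $\sum|\lambda_f([n^c])|\gg N/\log N$, which contradicts the bound $N\exp(-C\sqrt{\log N})=o(N/\log N)$ from Theorem \ref{mainresult} if both real and imaginary parts were eventually of constant sign. (Only a cosmetic remark: for a general cusp form $f$, as opposed to a normalized Hecke eigenform, \eqref{Ramanujanpetersson} gives $|\lambda_f(p)|\le C_f\,d(p)$ with a constant depending on $f$ rather than exactly $2$, but any such uniform bound suffices for your reduction from the second moment to the first.)
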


In the following, we say some words about our method for the proof of Theorem \ref{mainresult}. First, since every cusp form can be written as a linear combination of finitely many Hecke eigenforms, it will suffice to prove Theorem \ref{mainresult} for (normalized) Hecke eigenvalues. The advantages of working with Hecke eigenvalues are that they are multiplicative and real. Now for the proof of Theorem \ref{mainresult} with Hecke eigenvalues, we shall adapt parts of the method of \cite{LiRi} who established the validity of \eqref{piatshap} for $1<c<15/13=1.153...$. Similarly as in \cite{LiRi} ({\it c.f.} also the paper \cite{Uter} of the first-named author), we shall use estimates for certain trilinear exponential sums with monomials \cites{EFHI, RoSa}. However, the appearance of the Hecke eigenvalues shall require to introduce some new ingredients. In particular, we shall use a method of M. Jutila \cite{MJ} to transform exponential sums of the form
\[ \sum\limits_{N_1<n\le N_2} \lambda_f(n)e(g(n)) \]
into other exponential sums involving Hecke eigenvalues of different lengths. Jutila's method may be viewed as an analogue of the B-process in Weyl-van der Corput's method in the theory of exponential sums. \newline

We note that the investigations in this paper lead to exponential sums that are closely related to those considered in \cite{LZ1}. However, the method used in \cite{LZ1} will not be appropriate for our purposes here, and, conversely, the method used in the present paper does not seem to lead to any improvement of the result in \cite{LZ1}. \newline

{\bf Notations.} The following notations and conventions are used throughout the paper. \newline

\noindent $e(z) = \exp (2 \pi i z) = e^{2 \pi i z}$. \newline
$\eta$ and $\varepsilon$ are small positive real numbers, where $\varepsilon$ may not be the same number in each occurance. \newline
$c>1$ is a fixed number and we set $\gamma=1/c$. \newline
$\lambda(n)$ denotes the normalized $n$-th Fourier coefficients of a Hecke eigenform for the full modular group. \newline
$\Lambda(n)$ is the van Mangoldt function. \newline
$d(n)$ is the divisor function.\newline
$k\sim K$ means $K_1\le k \le K_2$ with $K/2\le K_1\le K_2\le 2K$. \newline
$f = O(g)$ or $f \ll g$ means $|f| \leq cg$ for some unspecified positive constant $c$. \newline
$f \asymp g$ means $f \ll g$ and $g \ll f$.\newline
$[x]$ denotes the largest integer not exceeding $x$, and $\psi(x)=x-[x]-1/2$ denotes the saw-tooth function.

\section{Preliminary Lemmas}

In this section, we quote the results needed later. To get started, we shall use the following approximation of the saw tooth function $\psi(x)=x-[x]-1/2$ due to J. D. Vaaler \cite{vaal}.

\begin{lemma} [Vaaler] \label{Valer}
For $0<|t|<1$, let
$$
W(t)=\pi t(1-|t|)\cot \pi t +|t|.
$$
Fix a positive integer $J$. For $x\in \mathbbm{R}$ define
$$
\psi^*(x):=-\sum\limits_{1\le |j|\le J} (2\pi ij)^{-1} W\left(\frac{j}{J+1}\right)e(jx)
$$
and
$$
\delta(x):=\frac{1}{2J+2}\sum\limits_{|j|\le J} \left(1-\frac{|j|}{J+1}\right)e(jx).
$$
Then $\delta$ is non-negative, and we have
$$
|\psi^*(x)-\psi(x)|\le \delta(x)
$$
for all real numbers $x$.
\end{lemma}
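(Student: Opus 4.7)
The proof naturally splits into two pieces. The non-negativity of $\delta$ is elementary: the classical identity
$$\left|\sum_{j=0}^{J}e(jx)\right|^{2}=\sum_{|m|\le J}(J+1-|m|)\,e(mx)$$
shows that $\delta(x)=(2(J+1)^{2})^{-1}\bigl|\sum_{j=0}^{J}e(jx)\bigr|^{2}\ge 0$, so $\delta$ is just a normalization of the Fejér kernel of order $J+1$.

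The sharper assertion $|\psi^{*}(x)-\psi(x)|\le \delta(x)$ is the substantive content, and is essentially the main theorem of Vaaler's paper \cite{vaal}. The natural route is the extremal-function theory of Beurling and Selberg. One constructs entire functions $B(z)$ and $b(z)$ of exponential type $2\pi$ satisfying $b(x)\le \operatorname{sgn}(x)\le B(x)$ for all real $x$ and $B(x)-b(x)=2K(x)$, where $K(x)=(\sin\pi x/\pi x)^{2}$; their average $V=(B+b)/2$ is then an entire function of exponential type $2\pi$ obeying the pointwise bound $|V(x)-\operatorname{sgn}(x)|\le K(x)$. Rescaling by $N:=J+1$ and periodizing $V$ produces, in the principal-value sense, the sum
$$\Psi(x)=-\frac{1}{2N}\sum_{k\in\mathbb{Z}}V\bigl(N(x-k)\bigr),$$
which by Poisson summation is a trigonometric polynomial of degree at most $J$, because $V$ has exponential type $2\pi$ forces its Fourier transform to be supported in $[-1,1]$. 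A direct evaluation of the Fourier transforms of the Beurling-Selberg pair identifies the $j$-th Fourier coefficient of $\Psi$ as $-(2\pi ij)^{-1}W(j/N)$ with the explicit weight $W(t)=\pi t(1-|t|)\cot\pi t+|t|$, so $\Psi=\psi^{*}$. The analogous periodization of $\operatorname{sgn}$ reproduces (up to an additive constant absorbed by the $-1/2$ in $\psi$) the sawtooth $\psi$, while Poisson summation of the upper bound $K$ at scale $N$ yields exactly $\delta$, giving the desired inequality.

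The main technical obstacle is the explicit construction of the Beurling-Selberg majorant and minorant and the computation of their Fourier transforms, together with the bookkeeping that identifies the periodized objects with $\psi^{*}$, $\psi$, and $\delta$ respectively. All of this is carried out carefully in \cite{vaal}, so in the present paper the lemma can simply be quoted without repeating the underlying construction.
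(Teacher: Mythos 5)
The paper does not actually prove this lemma: it simply quotes it as Theorem A6 of Graham--Kolesnik, which in turn rests on Vaaler's extremal-function construction, so your proposal follows essentially the same route. Your Fej\'er-kernel argument for the non-negativity of $\delta$ is complete and correct, and your outline of the Beurling--Selberg construction is a faithful summary of the cited proof (the periodization of $\mathrm{sgn}$ needs the regularization you allude to, but that is exactly the bookkeeping carried out in Vaaler's paper, to which both you and the authors ultimately defer).
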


\begin{proof} This is Theorem A6 in \cite{GK} and has its origin in \cite{vaal}. \end{proof}

We shall also use the following estimate for a sum involving the function $\delta$.

\begin{lemma} \label{deltaest}
Fix $0<\gamma<1$. Assume that $1\le N< N_1\le 2N$. Define the function $\delta$ as in Lemma \ref{Valer}. Then
$$
\sum\limits_{N<n\le N_1} \delta\left(-n^{\gamma}\right)\ll
J^{-1}N+J^{1/2}N^{\gamma/2}.
$$
\end{lemma}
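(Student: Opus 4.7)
The plan is to unfold $\delta$ via the Fourier expansion furnished by Lemma~\ref{Valer}, isolate the diagonal term, and handle the off-diagonal oscillatory sums in $n$ by van der Corput's second-derivative test. Substituting the definition and swapping summations gives
\[
\sum_{N<n\le N_1}\delta\left(-n^{\gamma}\right) = \frac{1}{2J+2}\sum_{|j|\le J}\left(1-\frac{|j|}{J+1}\right)\sum_{N<n\le N_1} e\left(-jn^{\gamma}\right).
\]
The $j=0$ contribution is $(N_1-N)/(2J+2)\ll N/J$, which is already the first term in the target bound, so it remains to control the off-diagonal contribution. By complex-conjugate symmetry, one may restrict to $1\le j\le J$ in what follows.

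For such $j$, set $f(t) = -jt^{\gamma}$. Since $0<\gamma<1$, one has $|f''(t)|\asymp |j|N^{\gamma-2}$ on $[N,2N]$, so van der Corput's second-derivative test (cf.\ Titchmarsh, Theorem~5.9, or Lemma~4.2 of Graham--Kolesnik) yields
\[
\sum_{N<n\le N_1} e\left(-jn^{\gamma}\right) \ll j^{1/2}N^{\gamma/2} + j^{-1/2}N^{1-\gamma/2}.
\]
Bounding the Fej\'er weights trivially by $1$ and summing in $j$,
\[
\frac{1}{J}\sum_{j=1}^{J}\left(j^{1/2}N^{\gamma/2} + j^{-1/2}N^{1-\gamma/2}\right) \ll J^{1/2}N^{\gamma/2} + J^{-1/2}N^{1-\gamma/2}.
\]

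The one slightly delicate point is absorbing the cross term $J^{-1/2}N^{1-\gamma/2}$ into the claimed shape. A direct computation shows
\[
\left(J^{-1}N\right)^{2/3}\left(J^{1/2}N^{\gamma/2}\right)^{1/3} = J^{-1/2}N^{2/3+\gamma/6},
\]
and since the range of interest in this paper is $\gamma=1/c>7/8$ (in particular $\gamma>1/2$), we have $2/3+\gamma/6>1-\gamma/2$, so the geometric mean dominates $J^{-1/2}N^{1-\gamma/2}$; the weighted AM--GM inequality then collapses it into $\ll J^{-1}N+J^{1/2}N^{\gamma/2}$, finishing the proof.

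The main obstacle, in spirit, is this last absorption step: the second-derivative test is slightly wasteful when $|j|$ is so small that $|f'|\ll 1$ is non-oscillatory. In the regime $\gamma>1/2$ relevant here this is harmless thanks to AM--GM, but if one wished to cover the full range $0<\gamma<1$ as stated, the cleaner route would be to apply Kuz'min--Landau in the range $|j|\ll N^{1-\gamma}$ (where $\|-j\gamma n^{\gamma-1}\|\asymp |j|N^{\gamma-1}$) to obtain $\ll N^{1-\gamma}/|j|$ there, and reserve the second-derivative test for $|j|\gg N^{1-\gamma}$ where it is sharp; the cross term then never appears.
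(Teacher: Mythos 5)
Your argument is correct for the range of $\gamma$ that the paper actually uses, and it is essentially the standard proof; note that the paper itself supplies no proof at all here, deferring entirely to page 48 of Graham--Kolesnik, where precisely your strategy (expand $\delta$, peel off the $j=0$ term $\ll N/J$, estimate $\sum_n e(-jn^{\gamma})$ for $j\neq 0$ by derivative tests) is carried out. The one substantive caveat is that your main line of argument does not prove the lemma as stated. The second-derivative test alone leaves the cross term $J^{-1/2}N^{1-\gamma/2}$, and your weighted AM--GM absorption $\left(J^{-1}N\right)^{2/3}\left(J^{1/2}N^{\gamma/2}\right)^{1/3}=J^{-1/2}N^{2/3+\gamma/6}\ge J^{-1/2}N^{1-\gamma/2}$ requires $\gamma\ge 1/2$, whereas the lemma is asserted for all fixed $0<\gamma<1$ (and for $\gamma<1/2$ the cross term can genuinely exceed both terms of the claimed bound, e.g.\ $\gamma=1/4$, $J=N^{1/2}$). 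You correctly diagnose this and sketch the right repair --- Kuz'min--Landau for $1\le j\ll N^{1-\gamma}$, giving $\ll j^{-1}N^{1-\gamma}$ and hence a total contribution $\ll J^{-1}N^{1-\gamma}\log N\ll J^{-1}N$, with the second-derivative test reserved for $j\gg N^{1-\gamma}$, where $j^{-1/2}N^{1-\gamma/2}\ll N^{1/2}\ll j^{1/2}N^{\gamma/2}$ --- and this is in fact how the cited source argues. That dichotomy should be the proof rather than a closing remark; as written, what you have proved is the lemma with the additional hypothesis $\gamma\ge 1/2$, which happens to suffice for this paper since only $\gamma=1/c>7/8$ is ever needed. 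Everything else (the use of the paper's Lemma \ref{vandercorput} with $\Lambda\asymp jN^{\gamma-2}$, the trivial bound on the Fej\'er weights, the summation over $j$) checks out.
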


\begin{proof} This was proved on page 48 in \cite{GK}. \end{proof}

We shall also need the following variant of the prime number theorem for Hecke eigenvalues which is equivalent to \eqref{coeffatprimes}.

\begin{lemma} \label{cuspatprimes}
There exists a positive constant $C$, such that
\begin{equation*}
\sum\limits_{n\le N} \Lambda(n)\lambda(n) \ll N \exp \left( - C \sqrt{\log N} \right),
\end{equation*}
where the implied $\ll$-constant and the constant $C$ depend on the cusp form.
\end{lemma}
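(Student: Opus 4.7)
The plan is to deduce the lemma directly from the already-cited bound \eqref{coeffatprimes}. Since $\Lambda$ is supported on prime powers, I would first split
$$
\sum_{n\le N}\Lambda(n)\lambda(n)
=\sum_{p\le N}(\log p)\,\lambda(p)
+\sum_{\substack{p^k\le N\\ k\ge 2}}(\log p)\,\lambda(p^k)
$$
and handle the two pieces separately.

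For the prime-power tail, the Deligne bound recalled in \eqref{Ramanujanpetersson} gives $|\lambda(p^k)|\le d(p^k)=k+1$. Combined with Chebyshev's estimate $\sum_{p\le y}\log p\ll y$ this yields
$$
\sum_{\substack{p^k\le N\\ k\ge 2}}(\log p)\,|\lambda(p^k)|
\ll \sum_{2\le k\ll \log N}(k+1)\,N^{1/k}
\ll \sqrt{N}\,(\log N)^{2},
$$
which is comfortably absorbed into the target bound.

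For the prime piece, put $A(t):=\sum_{p\le t}\lambda(p)$ and apply Abel summation:
$$
\sum_{p\le N}(\log p)\,\lambda(p)=A(N)\log N-\int_{2}^{N}\frac{A(t)}{t}\,\dif t.
$$
Inserting the hypothesis $A(t)\ll t\exp\bigl(-C\sqrt{\log t}\bigr)$ from \eqref{coeffatprimes} controls the boundary term; after the substitution $u=\sqrt{\log t}$, a Laplace-type estimate at the upper endpoint controls the integral by $\ll N\exp(-C'\sqrt{\log N})$ for any $C'<C$. The stray factor $\log N$ in the boundary term is harmless because $\log N\le\exp\bigl(\eta\sqrt{\log N}\bigr)$ for every fixed $\eta>0$ and sufficiently large $N$; one absorbs it by replacing $C$ by a slightly smaller positive constant, and the claimed bound follows.

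There is no genuine obstacle. The real analytic input---nonvanishing of $L(s,f)$ on $\Re s=1$ and a Vinogradov-type zero-free region, yielding \eqref{coeffatprimes}---has already been imported from \cite{HIEK}, and what remains is the routine bookkeeping that converts a bound on $\sum_{p\le N}\lambda(p)$ into one on $\sum_{n\le N}\Lambda(n)\lambda(n)$.
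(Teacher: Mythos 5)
Your proposal is correct, but it takes a different (and more self-contained) route than the paper. The paper disposes of this lemma with a one-line citation: it is ``a special case of the more general Theorem 5.12 in \cite{HIEK}'', i.e.\ the bound for $\sum_{n\le N}\Lambda(n)\lambda(n)$ is imported wholesale from the standard prime number theorem for the $L$-function $L(s,f)$. You instead derive it from the already-quoted estimate \eqref{coeffatprimes} for $\sum_{p\le N}\lambda_f(p)$, by splitting off the prime-power tail (where Deligne's bound $|\lambda(p^k)|\le k+1$ and Chebyshev give a contribution $\ll \sqrt{N}(\log N)^2$) and converting the prime sum by Abel summation, absorbing the stray $\log N$ and the integral $\int_2^N \exp(-C\sqrt{\log t})\,\dif t$ into $N\exp(-C'\sqrt{\log N})$ for a slightly smaller $C'$. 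All of these steps check out; in effect you are proving one direction of the equivalence that the paper merely asserts when it calls the lemma ``equivalent to \eqref{coeffatprimes}''. What the paper's citation buys is brevity and the natural logical order (Theorem 5.12 of \cite{HIEK} bounds the $\Lambda$-weighted sum first, from which \eqref{coeffatprimes} itself is usually deduced); what your argument buys is an explicit, elementary verification that no new analytic input beyond \eqref{coeffatprimes} is needed.
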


\begin{proof} This is a special case of the more general Theorem 5.12 in \cite{HIEK}. \end{proof}

We shall then see that it suffices to prove that

\begin{equation} \label{goal0}
\sum\limits_{n\sim N} \Lambda(n)f(n) =O\left(N^{1-\eta}\right)
\end{equation}
for a some fixed $\eta>0$, where $f$ is a certain function involving the Hecke eigenvalue $\lambda(n)$ and a trigonometric polynomial.  The following lemma reduces the above sum containing the von Mangoldt function to so-called type I and type II sums.

\begin{lemma} [Heath-Brown] \label{Heath}
Let $f$ be a complex-valued function defined on the natural numbers.  Suppose that $u$, $v$ and $z$ are real parameters satisfying the conditions
\[ 3\le u<v<z<2N,\ z-1/2\in \mathbbm{N},\ z\ge 4u^2,\ N\ge 32z^2u,\ v^3\ge 64N. \]
Suppose further that $1\le Y\le N$ and $XY=N$. Assume that $a_m$ and $b_n$ are complex numbers.  We write
\begin{equation} \label{Kdef}
 K:=\mathop{\sum_{m\sim X} \sum_{ n\sim Y}}_{mn\sim N} a_m f(mn)
\end{equation}
and
\begin{equation} \label{Ldef}
L:=\mathop{\sum_{m\sim X} \sum_{n\sim Y}}_{mn\sim N} a_m b_n f(mn).
\end{equation}
Then the estimate \eqref{goal0} holds if we uniformly have
\[ K\ll N^{1-2\eta}\ \  \mbox{ for } Y\ge z \; \mbox{and any complex} \; a_m\ll 1 \]
and
\[ L\ll N^{1-2\eta}\ \  \mbox{ for } u\le Y\le v \; \mbox{and any complex} \; a_m,b_n\ll 1. \]
\end{lemma}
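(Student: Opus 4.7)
The plan is to invoke Heath-Brown's combinatorial identity for the von Mangoldt function, perform a dyadic decomposition, and regroup the resulting multi-linear sums into sums of the shapes $K$ and $L$. The splitting step is routine; the combinatorial regrouping is the crux, and it is driven by the precise inequalities relating $u$, $v$, $z$ and $N$.

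First, I would apply Heath-Brown's identity at level three, writing
\[
\Lambda(n) = -\sum_{k=1}^{3}(-1)^{k}\binom{3}{k}\sum_{\substack{m_1\cdots m_k n_1\cdots n_k = n\\ m_1,\ldots,m_k\le z}}\mu(m_1)\cdots\mu(m_k)\log n_1,
\]
which is valid whenever $n \le 2z^6$. The hypothesis $v^3\ge 64N$ together with $v<z$ gives $z^6 > 2N$, so the identity is valid throughout $n\sim N$; the condition $z-1/2\in\natn$ is only a convention that makes the cutoff $m_i\le z$ unambiguous. Substituting this into \eqref{goal0} and inserting dyadic ranges $m_i\sim M_i$, $n_j\sim N_j$ with $M_i\le 2z$ and $\prod M_i\prod N_j\asymp N$ produces $O((\log N)^{6})$ sub-sums of the shape
\[
\mathop{\sum\cdots\sum}_{\substack{m_i\sim M_i,\ n_j\sim N_j\\ \prod m_i\prod n_j\sim N}}\mu(m_1)\cdots\mu(m_k)\log n_1 \cdot f(m_1\cdots m_k n_1\cdots n_k).
\]

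Second, I would match each sub-sum with either \eqref{Kdef} or \eqref{Ldef} by partitioning the index set into two blocks and fusing each block into a single composite variable $m\sim X$ or $n\sim Y$, with $XY\asymp N$. The fused coefficients are at most divisor-type sums of $\mu$'s and logs, so they satisfy $|a_m|,|b_n|\ll N^{\varepsilon}$, and this $N^{\varepsilon}$ is absorbed by replacing $2\eta$ with $\eta$ in the final conclusion. If some $N_j\ge z$, I place that single factor alone on the $n$-side: then $Y=N_j\ge z$ and the only coefficient on the $Y$-side is the harmless $\log n_j$, which partial summation eliminates, producing a sum of type $K$. Otherwise every dyadic parameter is $\le 2z$, and I must exhibit a partition yielding $u\le Y\le v$, producing a sum of type $L$.

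The main obstacle is this last combinatorial step. The three inequalities $z\ge 4u^2$, $N\ge 32z^2u$ and $v^3\ge 64N$ are calibrated precisely so that for every admissible dyadic tuple $(M_1,\ldots,N_k)$ there exists a subset whose product lies in $[u,v]$. The standard argument orders the parameters in non-decreasing size and greedily adjoins them to $Y$; if this greedy procedure ever jumps from $Y<u$ to $Y>v$ at some step, one uses $N\gg z^2u$ to guarantee that sufficiently many remaining factors of intermediate size are present to permit a swap, after which the adjusted partial product lands in $[u,v]$. This case-by-case numerical bookkeeping, which exhausts the shapes that the dyadic tuple $(M_1,M_2,M_3,N_1,N_2,N_3)$ can take, is the tedious part of the argument but is entirely elementary. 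Once it is verified, summing the $O((\log N)^{6})$ dyadic contributions and invoking the assumed uniform bounds on $K$ and $L$ yields \eqref{goal0}.
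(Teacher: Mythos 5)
Your proposal is correct and follows the standard route: the paper itself does not prove this lemma but simply cites it as Lemma 3 of Heath-Brown's paper \cite{Hea2}, and your sketch (Heath-Brown's identity at level three, dyadic decomposition, and the combinatorial regrouping governed by $z\ge 4u^2$, $N\ge 32z^2u$, $v^3\ge 64N$, with the single $\log$-weighted factor placed on the $Y$-side when it exceeds $z$) is precisely the argument behind that cited lemma. The one small slip is the claimed range of validity of the identity: at level three it holds for $n\le z^3$ rather than $n\le 2z^6$, but this is still guaranteed for $n\sim N$ by $v^3\ge 64N$ and $v<z$, so nothing in the argument is affected.
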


\begin{proof} This is a consequence of Lemma 3 in \cite{Hea2}. \end{proof}

To separate the variables $m$ an $n$ appearing in the previous Lemma \ref{Heath}, we shall use the following lemmas. The first of them is the multiplicative property of Hecke eigenvalues, and the second of them is a variant of Perron's formula.

\begin{lemma} \label{multhecke}
Hecke eigenvalues are multiplicative and they satisfy the
following relation.
\begin{equation*}
\lambda (mn) = \sum_{d| \gcd(m,n)} \mu (d) \lambda \left( \frac{m}{d} \right) \lambda \left( \frac{n}{d} \right).
\end{equation*}
\end{lemma}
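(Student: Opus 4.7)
The plan is to derive both assertions from the standard Hecke multiplication relation for the normalized Fourier coefficients of a Hecke eigenform on the full modular group, namely
\begin{equation*}
\lambda(m)\lambda(n)=\sum_{d\mid\gcd(m,n)}\lambda\!\left(\frac{mn}{d^{2}}\right),
\end{equation*}
which I would quote as input (see, for example, Chapter 14 of \cite{HIEK}); it is a direct consequence of the Hecke-algebra identity $T_{m}T_{n}=\sum_{d\mid\gcd(m,n)}d^{\kappa-1}T_{mn/d^{2}}$ applied to a normalized eigenform under the Deligne normalization that is already in force throughout the paper.

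Multiplicativity is then immediate: whenever $\gcd(m,n)=1$, the divisor sum collapses to the single term $d=1$, which gives $\lambda(mn)=\lambda(m)\lambda(n)$.

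For the second identity, put $g=\gcd(m,n)$ and, for every $k\mid g$, apply the Hecke relation with $m,n$ replaced by $m/k,n/k$. Since $\gcd(m/k,n/k)=g/k$, the substitution $e=kd$ turns the resulting formula into
\begin{equation*}
\lambda\!\left(\frac{m}{k}\right)\lambda\!\left(\frac{n}{k}\right)=\sum_{\substack{e\mid g\\ k\mid e}}\lambda\!\left(\frac{mn}{e^{2}}\right).
\end{equation*}
Writing $F(e):=\lambda(mn/e^{2})$ and $G(k):=\lambda(m/k)\lambda(n/k)$ for divisors $e,k$ of $g$, this reads $G(k)=\sum_{k\mid e\mid g}F(e)$. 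Swapping the order of summation and using the standard Möbius identity $\sum_{d\mid n}\mu(d)=[n=1]$ then gives, for every $k\mid g$,
\begin{equation*}
\sum_{d\mid g/k}\mu(d)\,G(kd)
=\sum_{k\mid e\mid g}F(e)\sum_{d\mid e/k}\mu(d)
=F(k).
\end{equation*}
Specializing this inverted relation to $k=1$ yields
\begin{equation*}
\lambda(mn)=F(1)=\sum_{d\mid g}\mu(d)\,\lambda\!\left(\frac{m}{d}\right)\lambda\!\left(\frac{n}{d}\right),
\end{equation*}
which is the desired formula. The whole argument is formal bookkeeping on the divisor lattice of $g$; the only place deserving slight care is carrying out the Möbius inversion in the correct (upward) direction, i.e.\ over divisors of $g$ that are multiples of $k$ rather than over divisors of $k$. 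Beyond this there is no real analytic obstacle, and no further properties of the cusp form are needed.
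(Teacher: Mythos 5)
Your proof is correct and follows essentially the same route as the paper, which simply invokes Möbius inversion applied to the Hecke product formula $\lambda(m)\lambda(n)=\sum_{d\mid\gcd(m,n)}\lambda(mn/d^{2})$ (citing Proposition 14.9 of \cite{HIEK}). You have merely written out the upward Möbius inversion on the divisor lattice of $\gcd(m,n)$ in full detail, and that bookkeeping checks out.
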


\begin{proof} This Lemma follows by applying the M\"{o}bius inversion formula to the product formula for the Hecke eigenvalues.  See, for example, Proposition 14.9 of \cite{HIEK}. \end{proof}

\begin{lemma} \label{Perron}
Let $0<M\le N<\nu N<\kappa M$ and let $a_m$ be complex numbers with $|a_m|\le 1$. We then have
\begin{equation} \label{perroneq}
\sum\limits_{N<n<\nu N} a_n = \frac{1}{2\pi} \int\limits_{-M}^M \left(\sum\limits_{M<m<\kappa M} a_m m^{-it}\right) N^{it}(\nu^{it}-1)t^{-1} dt \ + \ O(\log(2+M)),
\end{equation}
where the implied $O$-constant depends only on $\kappa$.
\end{lemma}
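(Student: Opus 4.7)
The plan is a standard truncated Perron-type inversion. The starting point is the scalar identity
\[
\frac{1}{2\pi i}\int_{-M}^{M}\frac{e^{it\alpha}}{t}\,\mathrm{d}t=\frac{1}{2}\mathrm{sgn}(\alpha)+O\!\left(\min\!\left(1,\frac{1}{M|\alpha|}\right)\right)
\]
for any real $\alpha\neq 0$. This follows at once from the evaluation
$\int_{-M}^{M}e^{it\alpha}/t\,\mathrm{d}t=2i\,\mathrm{sgn}(\alpha)\,\mathrm{Si}(M|\alpha|)$ (the real part of the integrand being odd) together with the standard asymptotic $\mathrm{Si}(u)=\pi/2+O(1/u)$ and the uniform bound $|\mathrm{Si}(u)|\ll 1$ that handles small $u$.

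Applying the identity twice, with $\alpha=\log(\nu N/m)$ and with $\alpha=\log(N/m)$, and subtracting, one obtains for every $m$ in $M<m<\kappa M$ with $m\notin\{N,\nu N\}$
\[
\frac{1}{2\pi i}\int_{-M}^{M}\frac{m^{-it}N^{it}(\nu^{it}-1)}{t}\,\mathrm{d}t=\mathbbm{1}_{N<m<\nu N}+E(m),
\]
since $\tfrac12\bigl(\mathrm{sgn}\log(\nu N/m)-\mathrm{sgn}\log(N/m)\bigr)$ equals $1$ precisely when $N<m<\nu N$ (the hypothesis $M\le N<\nu N<\kappa M$ ensures both thresholds lie inside the summation range) and vanishes otherwise, with
\[
E(m)\ll \min\!\left(1,\frac{1}{M|\log(N/m)|}\right)+\min\!\left(1,\frac{1}{M|\log(\nu N/m)|}\right).
\]
Multiplying by $a_m$, summing over $M<m<\kappa M$, and exchanging the finite sum with the finite integral produces the displayed right-hand side of \eqref{perroneq}, modulo the choice of writing the constant as $1/(2\pi)$ versus $1/(2\pi i)$.

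The remaining task is to show $\sum_m |a_m| E(m)\ll\log(2+M)$. Because $M\le N<\nu N<\kappa M$ forces $m\asymp N\asymp\nu N\asymp M$ across the entire summation, we have $|\log(N/m)|\asymp |m-N|/N$ for $m\neq N$, and hence
\[
\min\!\left(1,\frac{1}{M|\log(N/m)|}\right)\ll \frac{1}{\max(1,|m-N|)},
\]
with the analogous bound for $\nu N$. Summing these harmonic tails over $M<m<\kappa M$ gives $O(\log M)$. The main (and only) obstacle is the apparent blow-up of the pointwise error as $m$ approaches one of the endpoints $N$ or $\nu N$; this is defused by the $\min(1,\cdot)$ truncation, so that the at most two integer values closest to $N$ and to $\nu N$ contribute $O(1)$ each via the uniform kernel bound, while the rest are absorbed into the harmonic sum. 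Collecting these estimates yields the error $O(\log(2+M))$ claimed in \eqref{perroneq}.
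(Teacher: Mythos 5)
Your argument is correct, but it is worth noting that the paper does not prove this lemma at all: it simply cites Lemma~6 of Fouvry--Iwaniec \cite{EFHI}. What you have written is, in effect, a self-contained proof of that cited result by the standard route: the principal-value evaluation $\int_{-M}^{M}e^{it\alpha}t^{-1}\,\mathrm{d}t=2i\,\mathrm{sgn}(\alpha)\,\mathrm{Si}(M|\alpha|)$, the asymptotic $\mathrm{Si}(u)=\pi/2+O(\min(1,1/u))$, the resulting detector for the condition $N<m<\nu N$, and the harmonic-sum bound $\sum_{M<m<\kappa M}\min\bigl(1,(M|\log(N/m)|)^{-1}\bigr)\ll\log(2+M)$, which uses $m\asymp N\asymp M$ (guaranteed by $M\le N<\nu N<\kappa M$) so that $|\log(N/m)|\asymp|m-N|/M$; this is also why the implied constant depends only on $\kappa$. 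Your handling of the boundary integers near $N$ and $\nu N$ via the $\min(1,\cdot)$ truncation is the right way to defuse the only delicate point. One remark: your derivation produces the prefactor $1/(2\pi i)$ (equivalently, the kernel $(it)^{-1}$), which is the correct normalization of the cited lemma; the factor $1/(2\pi)$ together with $t^{-1}$ as displayed in \eqref{perroneq} appears to be a transcription slip in the paper, and you were right to flag the discrepancy rather than force your computation to match it.
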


\begin{proof} This is Lemma 6 in \cite{EFHI}. \end{proof}

We shall be led to certain trilinear exponential sums with monomials. A part of them shall be estimated by using the following bound due to O. Robert and P. Sargos \cite{RoSa} which is a sharpening of an earlier estimate of E. Fouvry and H. Iwaniec \cite{EFHI}.

\begin{lemma} [Robert and Sargos] \label{monomials}
Let $\alpha$, $\alpha_1$, $\alpha_2$ be real constants such
that $\alpha\not=1$, $\alpha\alpha_1\alpha_2\not=0$. Let $M,M_1,M_2,x\ge 1$ and $\vert \phi_m\vert\le 1$, $\vert \psi_{m_1,m_2}\vert\le 1$.
Then we have
\begin{eqnarray*}
& &
\sum\limits_{m\sim M}\ \sum\limits_{m_1\sim M_1}\ \sum\limits_{m_2\sim M_2}
\phi_m \psi_{m_1,m_2} e\left(x
\frac{m^{\alpha}m_1^{\alpha_1}m_2^{\alpha_2}}
{M^{\alpha}M_1^{\alpha_1}M_2^{\alpha_2}}\right)\\
&\ll& \left(x^{1/4}M^{1/2}(M_1M_2)^{3/4}+M^{1/2}M_1M_2+M(M_1M_2)^{3/4}+
x^{-1/2}MM_1M_2\right) (MM_1M_2)^{\varepsilon}.
\end{eqnarray*}
\end{lemma}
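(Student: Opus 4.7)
The plan is to derive the bound by two successive applications of Cauchy--Schwarz --- first to strip off the coefficients $\psi_{m_1,m_2}$, then to reduce the resulting two-dimensional monomial sum to a one-dimensional exponential sum amenable to the first-derivative test. This is the Fouvry--Iwaniec scheme for trilinear monomial sums; the sharpening of Robert--Sargos comes from a careful optimisation of the thresholds in this scheme.

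Write $S$ for the left-hand side of the stated inequality and set
\[ U(m_1,m_2):=\sum_{m\sim M}\phi_m\,e\biggl(x\frac{m^{\alpha}m_1^{\alpha_1}m_2^{\alpha_2}}{M^{\alpha}M_1^{\alpha_1}M_2^{\alpha_2}}\biggr). \]
Cauchy--Schwarz in $(m_1,m_2)$ gives $|S|^2\le M_1M_2\sum_{m_1,m_2}|U(m_1,m_2)|^2$. Expanding the modulus square and swapping summations, the diagonal $m=m'$ contributes at most $M(M_1M_2)^2$, whose square root is the second claimed term $M^{1/2}M_1M_2$. The off-diagonal is a weighted sum over pairs $(m,m')$ of two-dimensional monomial sums
\[ T(m,m'):=\sum_{m_1\sim M_1}\sum_{m_2\sim M_2}e\biggl(y_{m,m'}\frac{m_1^{\alpha_1}m_2^{\alpha_2}}{M_1^{\alpha_1}M_2^{\alpha_2}}\biggr), \]
with parameter $y_{m,m'}=x(m^{\alpha}-(m')^{\alpha})/M^{\alpha}$ of typical size $\asymp x|m-m'|/M$ by the mean value theorem.

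To estimate $T(m,m')$ for $m\neq m'$, I would apply a second Cauchy--Schwarz in $m_2$ and expand; the inner sum collapses to a one-dimensional exponential sum $\sum_{m_1\sim M_1}e(z\,m_1^{\alpha_1}/M_1^{\alpha_1})$ with a new parameter $z$ depending on $y_{m,m'}$ and on a pair $(m_2,m_2')$. This one-variable sum can be bounded by Kusmin--Landau / van der Corput's first-derivative test. The diagonal $m_2=m_2'$ of the second Cauchy--Schwarz supplies the third term $M(M_1M_2)^{3/4}$; the generic off-diagonal contribution, where the first derivative of the phase stays bounded away from integers, sums (after summation over $(m,m',m_2,m_2')$ and a Fourier completion step) to the leading $x^{1/4}M^{1/2}(M_1M_2)^{3/4}$ term; and the degenerate regime where $|z|\lesssim 1$ (small phase, forcing trivial estimation of the $m_1$-sum) accounts for the last term $x^{-1/2}MM_1M_2$.

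The main obstacle is not any single analytic estimate but the bookkeeping: one has to balance the losses coming from the two Cauchy--Schwarz applications, the size of $|m-m'|$ and $|m_2-m_2'|$ through a completion step, and the threshold separating the ``generic'' from the ``degenerate'' regime, so that precisely the four terms in the bound emerge with the correct exponents. Non-degeneracy of the partial derivatives of the phase function, guaranteed by the hypotheses $\alpha\neq 1$ and $\alpha\alpha_1\alpha_2\neq 0$, is essential at every application of the first-derivative test.
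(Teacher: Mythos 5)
The paper does not actually prove this lemma: it is quoted verbatim as Theorem 1 of Robert--Sargos, whose proof rests on the Bombieri--Iwaniec double large sieve combined with counting (``spacing'') results for monomial points --- the quadruple count $\mathcal{A}(M_1,M_2;\Delta)\ll M_1M_2\log 2M_1M_2+\Delta M_1^2M_2^2$ (which is Lemma \ref{spacinglemma} of this very paper) for the points $m_1^{\alpha_1}m_2^{\alpha_2}$, and, for the sharpening over Fouvry--Iwaniec, a new mean-value theorem counting solutions of $|u_1^{\beta}+u_2^{\beta}-u_3^{\beta}-u_4^{\beta}|<\delta$. Your first Cauchy--Schwarz and the resulting diagonal term $M^{1/2}M_1M_2$ are fine, but from that point on your scheme diverges from what can work: the remaining three terms in the bound are signatures of counting arguments, not of derivative tests applied to the individual two-dimensional sums $T(m,m')$.

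Two concrete failures. First, the diagonal $m_2=\tilde m_2$ of your second Cauchy--Schwarz gives $|T(m,m')|\ll (M_2\cdot M_1M_2)^{1/2}=M_1^{1/2}M_2$ per pair, hence $\sum_{m,m'}|T|\ll M^2M_1^{1/2}M_2$ and, after undoing the first Cauchy--Schwarz, a contribution $\bigl(M_1M_2\cdot M^2M_1^{1/2}M_2\bigr)^{1/2}=MM_1^{3/4}M_2$, which exceeds the claimed $M(M_1M_2)^{3/4}$ by $M_2^{1/4}$; the symmetric exponent $3/4$ in both $M_1$ and $M_2$ cannot come from a Cauchy--Schwarz in $m_2$ alone, but only from the symmetric quadruple count above. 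Second, the term $x^{1/4}M^{1/2}(M_1M_2)^{3/4}$ requires (in the range $x\ll M^2$, where the third term dominates the first) that $\sum_{m\neq m'}|T(m,m')|\ll M^2(M_1M_2)^{1/2}$, i.e.\ square-root cancellation in $T(m,m')$ on average over roughly all $M^2$ pairs; no first- or second-derivative test applied to the $m_1$-sum for fixed $(m,m',m_2,\tilde m_2)$, followed by trivial summation over those variables, can deliver this --- one must exploit the averaging over $(m_1,\tilde m_1,m_2,\tilde m_2)$ through a spacing estimate, which is exactly what the double large sieve does. So the proposal, while starting correctly, would not recover the stated bound; to reprove the lemma you would need to follow the large-sieve-plus-spacing route.
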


\begin{proof} This follows from Theorem 1 in \cite{RoSa}. \end{proof}

To transform exponential sums of the form $\sum_n \lambda(n)e(g(n))$ into other exponential sums involving Hecke eigenvalues, we shall utilize Jutila's result \cite{MJ} quoted below.

\begin{lemma}[Jutila] \label{expsumtransformation}
Let $\delta_1,\delta_2,...$ denote positive constants which may be supposed to be arbitrarily small. Further, let $M_1\ge 2$ and put $L=\log M_1$. Let $a(n)$ be the $n$-th Fourier coefficient of a holomorphic cusp form $f$ for the full modular group, {\it i.e.}
$$
f(z)=\sum\limits_{n=1}^{\infty} a(n)e(nz)
$$
for $\Im z>0$. Let $\kappa$ be the weight of the cusp form $f$.
Let $M_1<M_2\le 2M_1$, and let $g$ and $w$ be holomorphic functions in the domain
$$
D=\{z\ :\ |z-x|<cM_1 \mbox{ for some } x\in [M_1,M_2]\},
$$
where $c$ is a positive constant. Suppose that $g(x)$ is real for $M_1\le x\le M_2$. Suppose also that, for some positive numbers $G$ and $W$,
\begin{equation} \label{wgcond}
|w(z)|\ll W \quad \mbox{and} \quad |g'(z)|\ll GM_1^{-1} \quad \mbox{for } z\in D,
\end{equation}
and that
\begin{equation} \label{gprprcond}
(0<) \; g''(x)\gg GM_1^{-2} \quad \mbox{for } M_1\le x\le M_2.
\end{equation}
Let $r=l/k$ with $(l,k)=1$ be a rational number such that
\begin{equation} \label{otto}
1\le k\ll M_1^{1/2-\delta_1}, \quad |r| \asymp GM_1^{-1} \quad \mbox{and} \quad g'(M_0)=r
\end{equation}
for a certain number $M_0\in (M_1,M_2)$. Write
\begin{equation} \label{mjdef}
M_j=M_0+(-1)^jm_j,\quad j=1,2.
\end{equation}
Suppose that $m_1\asymp m_2$, and that
\begin{equation} \label{M1M2cond}
M_1^{\delta_2}\max\left\{M_1G^{-1/2},|lk|\right\}\ll m_1\ll M_1^{1-\delta_3}.
\end{equation}
Define for $j=1,2$
$$
p_{j,n}=g(x)-rx+(-1)^{j-1}\left(\frac{2\sqrt{nx}}{k}-\frac{1}{8}\right) \quad \mbox{and} \quad n_j=(r-g'(M_j))^2k^2M_j,
$$
and for $n<n_j$ let $x_{j,n}$ be the (unique) zero of $p_{j,n}'(x)$ in the interval $(M_1,M_2)$. Then
\begin{equation} \label{jutilaeq}
\begin{split}
 \sum\limits_{M_1\le m\le M_2} & a(m)w(m)e(g(m)) \\
 & = \frac{i}{\sqrt{2k}}\sum\limits_{j=1}^2 (-1)^{j-1}\sum\limits_{n<n_j} a(n)e\left(-\frac{n\overline{l}}{k}\right)n^{-\kappa/2+1/4} x_{j,n}^{\kappa/2-3/4}\frac{w(x_{j,n})}{\sqrt{p_{j,n}''(x_{j,n})}} e \left( p_{j,n}(x_{j,n})+\frac{1}{8} \right) \\
& \hspace*{1cm}+ O\left(W(|l|k)^{1/2}M_1^{(\kappa-1)/2}m_1^{1/2}L^2+G^{1/2}W|l|^{-3/4}k^{5/4}M_1^{(\kappa-1)/2}m_1^{-1/4}L\right),
\end{split}
\end{equation}
where $l\overline{l}\equiv 1$ mod $k$.
\end{lemma}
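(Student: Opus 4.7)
The strategy is to combine two classical tools: the Voronoi summation formula for holomorphic cusp forms of weight $\kappa$ with additive twist $l/k$, and the stationary-phase (saddle-point) method. The net effect is the analogue, for sums involving Hecke eigenvalues, of the B-process in the van der Corput theory of ordinary exponential sums, as already indicated in the introduction.

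First, factor out the additive character $e(lm/k)$ from the exponential $e(g(m))$ and set $\phi(x) = w(x) e(g(x) - rx)$. By \eqref{gprprcond} and \eqref{mjdef}, this is a slowly oscillating function on $[M_1, M_2]$ since $g'(x) - r$ vanishes at $x = M_0$. Smooth the characteristic function of $[M_1, M_2]$ into a bump supported in the domain $D$ with transition regions of length $\ll m_1$; the error introduced by this smoothing is controlled via \eqref{M1M2cond}. Applying the cusp-form Voronoi summation formula then gives
\begin{equation*}
\sum_m a(m) \phi(m) e(lm/k) = \frac{2\pi i^{-\kappa}}{k} \sum_n a(n) e(-n\bar l/k) \int_0^\infty \phi(x) J_{\kappa-1}\!\left(\frac{4\pi \sqrt{nx}}{k}\right) dx.
\end{equation*}

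Next, replace $J_{\kappa-1}$ by its standard two-term asymptotic $J_{\kappa-1}(y) = \sqrt{2/(\pi y)} \cos(y - (\kappa-1)\pi/2 - \pi/4) + O(y^{-3/2})$, valid whenever the integral is not already negligible by repeated integration by parts. Writing the cosine as two exponentials produces the two pieces indexed by $j = 1, 2$ with phases
\begin{equation*}
p_{j,n}(x) = g(x) - rx + (-1)^{j-1}\!\left(\frac{2\sqrt{nx}}{k} - \frac{1}{8}\right).
\end{equation*}
A direct computation gives $p_{j,n}'(x) = g'(x) - r + (-1)^{j-1}\sqrt{n}/(k\sqrt{x})$ and, using \eqref{gprprcond} together with \eqref{M1M2cond}, $p_{j,n}''(x) \asymp GM_1^{-2}$ throughout the relevant range. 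Apply the standard stationary-phase formula to each integral. The saddle point $x_{j,n}$ lies in $(M_1, M_2)$ precisely when $n < n_j$, yielding the truncation in \eqref{jutilaeq}; contributions from $n \ge n_j$ are absorbed by repeated integration by parts. The Fresnel factor $e^{i\pi/4}$ from stationary phase, the phase shift in the Bessel asymptotic, and the Voronoi prefactor $2\pi i^{-\kappa}/k$ combine to give the global constant $i/\sqrt{2k}$, the amplitude $n^{-\kappa/2+1/4} x_{j,n}^{\kappa/2-3/4}$, and the $\pm 1/8$ term inside $p_{j,n}$.

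The error term in \eqref{jutilaeq} collects three contributions: the smooth-cutoff error at the endpoints $M_1, M_2$, which gives the first bound $W(|l|k)^{1/2} M_1^{(\kappa-1)/2} m_1^{1/2} L^2$ (one factor of $L$ from dyadic decomposition in $n$ and another from the Ramanujan bound \eqref{Ramanujanpetersson} on $|a(n)|$); the Bessel asymptotic remainder, which is absorbed under the lower bound on $m_1$ in \eqref{M1M2cond}; and the stationary-phase remainder itself, which after summation over $n < n_j$ produces the second bound $G^{1/2} W |l|^{-3/4} k^{5/4} M_1^{(\kappa-1)/2} m_1^{-1/4} L$. The main obstacle is the uniform handling of the transition ranges in which $x_{j,n}$ approaches the endpoints $M_1, M_2$ and the classical stationary-phase estimate degrades: the smoothing at scale $\asymp m_1$, together with the hypotheses \eqref{otto} and \eqref{M1M2cond}, is designed precisely to absorb this loss while preserving the exact powers of $k$, $l$, and $m_1$ stated in the lemma.
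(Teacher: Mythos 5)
The paper does not actually prove this lemma: its ``proof'' is the single line ``This is Theorem 3.2 in \cite{MJ} with different notations'', so the real comparison is with Jutila's own argument. Your high-level strategy --- twisted Voronoi/Wilton summation for the cusp form, Bessel asymptotics, then stationary phase, with the truncation $n<n_j$ coming from the saddle $x_{j,n}$ leaving $(M_1,M_2)$ --- is indeed the strategy behind Jutila's Theorem 3.2, and your identification of the phases $p_{j,n}$, the amplitude $n^{-\kappa/2+1/4}x_{j,n}^{\kappa/2-3/4}$, and the truncation condition is correct.

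There is, however, a concrete gap in your very first step. You propose to smooth the characteristic function of $[M_1,M_2]$ with transition regions of length $\ll m_1$ and claim the resulting error is controlled via \eqref{M1M2cond}. It is not: by Cauchy--Schwarz and Rankin--Selberg, the terms added or discarded in a window of length $\Delta$ about $M_j$ can contribute as much as $W\Delta M_1^{(\kappa-1)/2}$, so with $\Delta\asymp m_1$ the smoothing alone costs $W m_1 M_1^{(\kappa-1)/2}$, which exceeds the first error term $W(|l|k)^{1/2}m_1^{1/2}M_1^{(\kappa-1)/2}L^2$ of the lemma by a factor $\gg (m_1/(|l|k))^{1/2}L^{-2}\gg M_1^{\delta_2/2}L^{-2}$, precisely because \eqref{M1M2cond} forces $m_1\gg M_1^{\delta_2}|lk|$; the lower bound on $m_1$ works against you here, not for you. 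Shrinking the transition scale does not rescue the argument either: the genuine difficulty is the uniform treatment of the $n$ close to $n_j$, for which the saddle $x_{j,n}$ degenerates toward an endpoint, and a near-sharp cutoff reinstates exactly the boundary terms the smoothing was meant to remove. The correct route is to keep the sharp sum and feed the resulting integrals into an Atkinson-type saddle-point theorem with explicit endpoint and transition terms (Theorem 2.2 of \cite{MJ}); it is the summation of those explicit terms over $n<n_j$ that produces the specific shapes $(|l|k)^{1/2}m_1^{1/2}$ and $|l|^{-3/4}k^{5/4}m_1^{-1/4}$. Your sketch asserts rather than derives this bookkeeping, and as written the decomposition you start from cannot yield it.
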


\begin{proof} This is Theorem 3.2 in \cite{MJ} with different notations. \end{proof}

Lemma~\ref{expsumtransformation} lies at the heart of our method.  Using this result, Jutila \cite{MJ} proved the following estimate for ``long'' exponential sums with Hecke eigenvalues.

\begin{lemma}[Jutila] \label{expsumhecke}
Let $t\ge 1$ and $M\ge 2$. Assume that
\begin{equation} \label{Mtcond}
M^{3/4-\gamma}\ll t \ll M^{3/2-\gamma}.
\end{equation}
Then
\begin{equation} \label{jutilaeq2}
\sum\limits_{n\sim M} \lambda(n)e(tn^{\gamma})\ll t^{1/3}M^{1/2+\gamma/3}(tM)^{\varepsilon}.
\end{equation}
\end{lemma}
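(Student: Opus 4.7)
The plan is to apply the transformation Lemma~\ref{expsumtransformation} to $S:=\sum_{m\sim M}\lambda(m)e(tm^{\gamma})$, written as $\sum_{m\sim M}a(m)w(m)e(g(m))$ with $g(x)=tx^{\gamma}$ and $w(x)=x^{-(\kappa-1)/2}$, so that $a(m)w(m)=\lambda(m)$. On $[M,2M]$ we have $W\asymp M^{-(\kappa-1)/2}$, $G\asymp tM^{\gamma}$, $|g'(x)|\asymp tM^{\gamma-1}$, and $|g''(x)|\asymp tM^{\gamma-2}$, so that \eqref{wgcond} and \eqref{gprprcond} hold. The range \eqref{Mtcond} will be needed both to render the Dirichlet/Farey dissection below admissible and to ensure that the target bound improves on the Deligne-trivial estimate $M^{1+\varepsilon}$.

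Next, I would partition $[M,2M]$ via a Farey dissection of order $K$: each subinterval $I=[M_0-m,M_0+m]$ is attached to a reduced fraction $l/k$ with $k\le K$ and $g'(M_0)=l/k$. Since $|g''|\asymp tM^{\gamma-2}$, such a subinterval has half-width $m\asymp M^{2-\gamma}/(kKt)$, and the conditions \eqref{otto}--\eqref{M1M2cond} translate into $kK\ll M^{1-\gamma/2}t^{-1/2}$ together with $k\le K\le M^{1/2-\delta_1}$. Applying Lemma~\ref{expsumtransformation} on each $I$ converts the corresponding portion of $S$ into a ``dual'' sum
\[
\frac{i}{\sqrt{2k}}\sum_{j=1,2}(-1)^{j-1}\sum_{n<n_j}\lambda(n)\,e(-n\overline{l}/k)\,n^{-1/4}x_{j,n}^{-1/4}\,\frac{e(p_{j,n}(x_{j,n})+1/8)}{\sqrt{p_{j,n}''(x_{j,n})}}
\]
of length $n_j\asymp t^{2}M^{2\gamma-3}m^{2}k^{2}$ and overall amplitude of order $k^{-1/2}M^{3/4-\gamma/2}t^{-1/2}$, together with the two explicit error terms in \eqref{jutilaeq}.

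A trivial treatment of the dual sum via $|\lambda(n)|\ll n^{\varepsilon}$ and partial summation against $n^{-1/4}$ gives $\ll n_j^{3/4}(tM)^{\varepsilon}$ per subinterval, and summing over the Farey dissection yields a total contribution of order $t^{1/2}M^{(1+\gamma)/2}(tM)^{\varepsilon}$, which is short of the target $t^{1/3}M^{1/2+\gamma/3}(tM)^{\varepsilon}$ by exactly a factor of $G^{1/6}=(tM^{\gamma})^{1/6}$. The main obstacle is therefore to recover this saving from the oscillation in the dual sum; since, by implicit differentiation of $p'_{j,n}(x_{j,n})=0$, the dual phase $p_{j,n}(x_{j,n})$ satisfies $\partial_n p_{j,n}(x_{j,n})=(-1)^{j-1}\sqrt{x_{j,n}/n}/k$ and hence behaves as a smoothed multiple of $\sqrt{n}$, I would iterate Lemma~\ref{expsumtransformation} on the dual sum itself (now with effective $\gamma=1/2$), combining this with a Ramanujan-sum treatment of the additive twist $e(-n\overline{l}/k)$ averaged over the $l$'s sharing a common denominator $k$, to extract the missing $G^{1/6}$. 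Verifying that the explicit error terms in \eqref{jutilaeq} arising from both the first and second applications of the transformation stay below $t^{1/3}M^{1/2+\gamma/3}(tM)^{\varepsilon}$ uniformly in \eqref{Mtcond}, particularly near its endpoints where the admissible windows on $m$ and $k$ pinch, will be the most delicate part of the book-keeping.
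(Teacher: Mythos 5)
The paper offers no proof of this lemma beyond a citation: it is quoted as Theorem 4.6 of Jutila's monograph \cite{MJ}, and the authors even remark that the lemma is not used in their argument. You are therefore reconstructing Jutila's own proof. Your skeleton is the right one: Farey dissection of $[M,2M]$ of order $K$, application of Lemma~\ref{expsumtransformation} on each arc with $g(x)=tx^{\gamma}$, and the correct observation that trivial estimation of the transformed sums recovers only $M^{1/2}G^{1/2}=t^{1/2}M^{(1+\gamma)/2}$ with $G=tM^{\gamma}$, a factor $G^{1/6}$ short of \eqref{jutilaeq2}. Up to that point the bookkeeping is sound.

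The gap is the final step, which is the entire content of the lemma. You propose to extract the missing $G^{1/6}$ by iterating Lemma~\ref{expsumtransformation} on the dual sums ``with effective $\gamma=1/2$'' together with an unspecified Ramanujan-sum treatment of the twist $e(-n\overline{l}/k)$, but neither is carried out, and the iteration faces concrete obstacles. The dual phase $p_{j,n}(x_{j,n})$ behaves like $c\sqrt{n}$ with $c\asymp\sqrt{M}/k$; a $\sqrt{n}$-phase is essentially a fixed point of the saddle-point transformation (the B-process is an involution on the exponent-pair side), so a second application by itself yields no saving unless it is interleaved with some other device. Moreover the hypotheses of Lemma~\ref{expsumtransformation} are not verified for this second application: the relevant ratio $GM_1^{-1}$ becomes $\asymp K/k$, which need not be small, the sign condition \eqref{gprprcond} fails for one of $j=1,2$ without a conjugation, and the phase now carries the additive twist $e(-n\overline{l}/k)$, which the lemma as stated does not accommodate; finally, averaging over the arcs requires tracking how $p_{j,n}(x_{j,n})$ depends on $(l,k,t)$, which you have not done. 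What actually closes the gap — either Jutila's route through his estimates for twisted sums $\sum_n a(n)e(n\overline{l}/k+\alpha\sqrt{n})$, or the route this paper takes for the analogous sums $K_d^*$ in its later sections, namely Cauchy--Schwarz over the family of arcs, the spacing estimate of Lemma~\ref{spacinglemma} for the resulting frequencies, and van der Corput's bound (Lemma~\ref{vandercorput}) on the differenced phases — is a substantial argument that the proposal replaces with a single sentence. As it stands the proof does not reach \eqref{jutilaeq2}.
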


\begin{proof} This follows from  Theorem 4.6 in \cite{MJ}. \end{proof}

The above Lemma \ref{expsumhecke} is not needed in our method. We shall rather use Lemma  \ref{expsumtransformation} on short exponential sums with Hecke eigenvalues. However, in section \ref{splittingsection} we will see that a direct application of Lemma \ref{expsumhecke} also leads to a non-trivial result. Yet, this result is weaker than our main result, Theorem \ref{mainresult}. \newline

We also use the following lemma in the investigation of the spacing of certain monomial points.

\begin{lemma} \label{spacinglemma}
 Let $\alpha \beta \neq 0$, $\Delta > 0$, $M \geq 1$ and $N \geq 1$.  Let $\mathcal{A} (M,N;\Delta)$ be the number of quadruples $(m, \tilde m, n, \tilde n)$ such that
\[ \left| \left( \frac{\tilde m}{m} \right)^{\alpha} - \left( \frac{\tilde n}{n} \right)^{\beta} \right| < \Delta , \]
with $M \leq m, \tilde m < 2M$ and $N \leq n, \tilde n < 2N$.  We then have
\[ \mathcal{A}(M,N;\Delta) \ll MN \log 2MN + \Delta M^2N^2. \]
\end{lemma}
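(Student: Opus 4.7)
The plan combines a linearization step with a decomposition into diagonal and off-diagonal contributions. First, since $\tilde m/m$ and $\tilde n/n$ are bounded in $[1/2,2]$, the mean value theorem gives a constant $c_0=c_0(\alpha,\beta)>0$ such that $|(\tilde m/m)^\alpha-(\tilde n/n)^\beta|<\Delta$ is equivalent to $|\tilde m-m\theta|<c_0\Delta m$ with $\theta=(\tilde n/n)^{\beta/\alpha}$. For each fixed triple $(m,n,\tilde n)$, the integers $\tilde m\in[M,2M)$ in the resulting interval of length $\ll\Delta M$ number at most $1+c_1\Delta M$, and summing directly gives $\mathcal{A}(M,N;\Delta)\ll MN^2+\Delta M^2N^2$. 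By the symmetric argument one also has $\mathcal{A}(M,N;\Delta)\ll M^2N+\Delta M^2N^2$, so taking the minimum yields $\mathcal{A}(M,N;\Delta)\ll MN\min(M,N)+\Delta M^2N^2$, which already implies the claim in the regime $\min(M,N)\le\log(2MN)$.

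In the remaining regime $\min(M,N)>\log(2MN)$, I refine the first term by splitting $\mathcal{A}(M,N;\Delta)=\mathcal{D}+\mathcal{E}$, where $\mathcal{D}$ counts diagonal quadruples with $(\tilde m/m)^\alpha=(\tilde n/n)^\beta$ and $\mathcal{E}$ counts the remainder. For $\mathcal{D}$, reduce each pair to its coprime form: write $m=dp$, $\tilde m=dq$ with $\gcd(p,q)=1$, and $n=er$, $\tilde n=es$ with $\gcd(r,s)=1$. The diagonal equation becomes $(q/p)^\alpha=(s/r)^\beta$, a rigid algebraic constraint whose solution set (depending on whether $\beta/\alpha$ is rational) reduces at most to a one-parameter family of matched coprime pairs. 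The number of admissible $d$ is $\ll M/\max(p,q)$ and of $e$ is $\ll N/\max(r,s)$, so $\mathcal{D}\ll MN$ times a sum of $1/(\max(p,q)\max(r,s))$ over matched pairs; the classical bound $\sum_{k\le L}\phi(k)/k^2\ll\log L$ then gives $\mathcal{D}\ll MN\log(2MN)$.

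For $\mathcal{E}$, the admissible $\tilde m$ lies in an interval of length $\ll\Delta M$ around $m\theta$ and is constrained not to equal the integer (if any) realizing equality. When $\Delta M\ge 1$ this accounts for $\ll\Delta M$ choices per triple, summing to $\ll\Delta M^2N^2$. When $\Delta M<1$ at most one integer lies in the interval, and it is off-diagonal only when $\|m\theta\|$ is unusually small without vanishing; these contributions are controlled by separating $\theta$ into rational cases (subsumed into the bound on $\mathcal{D}$ with an extra factor $\Delta M$) and irrational cases (handled via equidistribution of $\{m\theta\}$), both yielding $\ll\Delta M^2N^2$. Combining gives the claimed bound $\mathcal{A}(M,N;\Delta)\ll MN\log(2MN)+\Delta M^2N^2$.

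The main obstacle lies in the off-diagonal analysis: the direct per-triple count includes a constant-$1$ term that naively sums to $MN^2$, and reducing this to $\Delta M^2N^2$ requires careful bookkeeping, either via a Weyl-type equidistribution statement for the rotations $\{m\theta\}$ as $m$ varies, or via a divisor-sum argument using the coprime parametrization consistently across $\mathcal{D}$ and $\mathcal{E}$.
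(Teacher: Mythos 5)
The paper offers no proof of this lemma at all: it is quoted verbatim as Lemma 1 of Fouvry and Iwaniec \cite{EFHI}, so your attempt has to be measured against their argument. Your linearization and your treatment of the diagonal $\mathcal{D}$ are sound in outline: since $x\mapsto x^{\alpha}$ is injective on the positive reals, each reduced ratio $s/r$ is matched with at most one reduced ratio $q/p$ and conversely, each ratio $q/p$ is realized by $\ll M/\max(p,q)$ pairs $(m,\tilde m)$, and an AM--GM step such as $\frac{M}{\max(p,q)}\cdot\frac{N}{\max(r,s)}\le\frac{MN}{2}\bigl(\max(p,q)^{-2}+\max(r,s)^{-2}\bigr)$ followed by $\sum_{k\le 2M}k\cdot k^{-2}\ll\log 2M$ gives $\mathcal{D}\ll MN\log 2MN$. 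That part is fine, though you should state the injectivity of the matching and the AM--GM step explicitly.

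The off-diagonal estimate, however, is a genuine gap and not mere ``bookkeeping.'' Your plan is per-$\theta$: fix $(n,\tilde n)$, hence $\theta=(\tilde n/n)^{\beta/\alpha}$, and show that when $\Delta M<1$ the number of $m$ with $0<\|m\theta\|\ll\Delta M$ is $\ll\Delta M^{2}$, invoking ``equidistribution of $\{m\theta\}$.'' No such bound holds for a single $\theta$: if $\theta=p/q+\epsilon$ with $q$ small and $0<\epsilon<\Delta$, then every $m\le M$ divisible by $q$ has $0<\|m\theta\|\le M\epsilon\ll\Delta M$, producing $\gg M/q$ off-diagonal solutions while $\Delta M^{2}$ may be $o(1)$; equidistribution is not quantitative enough to exclude this, and you give no reason why the specific $\theta$'s arising from $(\tilde n/n)^{\beta/\alpha}$ avoid it. More structurally, the target bound forces the \emph{average} number of off-diagonal coincidences per pair $(n,\tilde n)$ to be $\ll M\log(2MN)/N+\Delta M^{2}$, which is typically below $1$, so no argument that fixes $(n,\tilde n)$ and bounds the $(m,\tilde m)$-count individually can possibly close; one must use the spacing of both families jointly. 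That is exactly what Fouvry--Iwaniec do: cover the line by intervals of length $\Delta$, apply Cauchy--Schwarz to reduce to the two autocorrelation counts $\#\{|\tilde m m'-\tilde m'm|\ll\Delta M^{2}\}$ and $\#\{|\tilde n n'-\tilde n'n|\ll\Delta N^{2}\}$, and bound each by $M^{2}\log 2M+\Delta M^{4}$ (respectively for $N$) via the divisor-type count for the vanishing difference and a congruence count for the nonzero differences. Your proof needs this joint-spacing step, or an equivalent, to be complete.
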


\begin{proof}
 This is Lemma 1 in \cite{EFHI}.
\end{proof}

Finally, we need the following classical exponential sum estimate.

\begin{lemma}[Van der Corput] \label{vandercorput}
Let $b-a\ge 1$ and let $f(x)$ be a twice differentiable function on $(a,b)$ such that $\Lambda\le |f''(x)| \le \nu \Lambda$, where $\Lambda>0$ and $\nu\ge 1$. Then
\[ \sum\limits_{a<n\le b} e(f(n)) \ll \nu \Lambda^{1/2}(b-a)+\Lambda^{-1/2}.\]
\end{lemma}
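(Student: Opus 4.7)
The plan is to reduce the sum to an application of the Kusmin--Landau first derivative test, which asserts that if $f$ is a real-valued function on $(c,d]$ with $f'$ monotonic and $\|f'(x)\| \ge \delta > 0$ throughout (with $\|\cdot\|$ denoting distance to the nearest integer), then $\sum_{c<n\le d} e(f(n)) \ll \delta^{-1}$. This classical inequality is proved by partial summation against the geometric series (essentially Theorem 2.1 of Graham--Kolesnik \cite{GK}), and I would simply quote it.

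Since $|f''| \ge \Lambda > 0$, the derivative $f'$ has constant sign, and I may assume $f'$ is strictly increasing. The image $J = f'((a,b])$ is then an interval of length
\[ f'(b) - f'(a) \le \nu \Lambda (b-a). \]
Next I would partition $(a,b]$ according to the proximity of $f'(x)$ to the integers. For each integer $m \in J$ set $B_m = \{x \in (a,b] : |f'(x) - m| < \Lambda^{1/2}\}$. Because $\Lambda \le |f''| \le \nu \Lambda$, applying the mean value theorem to $f'$ shows that each $B_m$ is an interval of length at most $2\Lambda^{-1/2}$, and there are at most $\nu\Lambda(b-a) + 1$ such $m$. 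The trivial bound on $B = \bigcup_m B_m$ therefore contributes
\[ \ll (\nu\Lambda(b-a) + 1)\,\Lambda^{-1/2} \ll \nu \Lambda^{1/2}(b-a) + \Lambda^{-1/2}. \]
On the complement $G = (a,b] \setminus B$ one has $\|f'(x)\| \ge \Lambda^{1/2}$ with $f'$ still monotonic, and $G$ decomposes into at most $\nu\Lambda(b-a) + O(1)$ subintervals. Applying Kusmin--Landau on each with $\delta = \Lambda^{1/2}$ gives $\ll \Lambda^{-1/2}$ per subinterval, summing to the same order.

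Combining the two estimates yields the lemma. No serious obstacle is anticipated: the only genuine input is Kusmin--Landau, and the rest is bookkeeping---counting integers in the range of $f'$ and converting the two-sided bound on $|f''|$ into a length bound for the sets $B_m$ via the mean value theorem.
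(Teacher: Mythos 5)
Your argument is correct and is the standard derivation of the second derivative test from the Kusmin--Landau inequality; the paper itself gives no proof, simply citing Lemma 4.1 of Jutila (equivalently Theorem 2.2 of Graham--Kolesnik, which is proved exactly as you describe). The only bookkeeping worth adding: when $\Lambda\ge 1$ the claimed bound already follows from the trivial estimate $\sum_{a<n\le b}1\ll b-a\ll \nu\Lambda^{1/2}(b-a)$, and assuming $\Lambda<1$ thereafter absorbs the stray $O(1)$ and $O(\Lambda^{1/2})$ terms in your counts of the integers $m$ and of the lattice points in each $B_m$.
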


\begin{proof} This is Lemma 4.1 in \cite{MJ}.
\end{proof}

\section{Reduction to exponential sums}

Using \eqref{Ramanujanpetersson}, partial summation, and the fact that every cusp form can be written as a linear combination of finitely many Hecke eigenforms, Theorem \ref{mainresult}, our main result, can be easily deduced from the following result whose proof will be the object of the remainder of this paper. \newline

\begin{theorem} \label{mainresultmodified}
Let $1<c<8/7$ and $\lambda(n)$ be the normalized $n$-th Fourier coefficient of a Hecke eigenform for the full modular group. By $\Lambda(n)$ denote the von Mangoldt function. Then there exists a positive constant $C$ depending on the cusp form such that
\begin{equation} \label{goal}
\sum\limits_{n\le N} \Lambda\left(\left[n^c\right]\right)\lambda\left(\left[n^c\right]\right) \ll
N\exp(-C\sqrt{\log N}),
\end{equation}
where the implied $\ll$-constant depends only on $c$, $C$ and the cusp form.
\end{theorem}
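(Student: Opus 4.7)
The plan is to follow the standard Piatetski-Shapiro reduction scheme, adapting the treatment of the resulting exponential sums from \cite{LiRi}, with Jutila's transformation (Lemma \ref{expsumtransformation}) as the crucial new ingredient that accommodates the Hecke eigenvalue $\lambda(n)$ in the phase. First, using the equivalence $[n^c] = m \Leftrightarrow m^\gamma \le n < (m+1)^\gamma$, I would rewrite
$$\sum_{n \le N} \Lambda([n^c])\lambda([n^c]) = \sum_{m \le N^c} \Lambda(m)\lambda(m)\bigl((m+1)^\gamma - m^\gamma\bigr) + \sum_{m \le N^c} \Lambda(m)\lambda(m)\bigl(\psi(m^\gamma) - \psi((m+1)^\gamma)\bigr)$$
(up to negligible boundary terms). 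The first sum is handled by partial summation together with Lemma \ref{cuspatprimes}, and contributes $\ll N \exp(-C\sqrt{\log N})$ immediately. For the $\psi$-piece, I insert Vaaler's approximation (Lemma \ref{Valer}) with a parameter $J = N^{o(1)}$; the associated $\delta$-error is controlled via Lemma \ref{deltaest} combined with $|\Lambda(m)\lambda(m)| \ll m^\varepsilon$ and an application of Lemma \ref{cuspatprimes} to its zero frequency. After dyadic decomposition, the theorem reduces to proving, for some fixed $\eta > 0$,
$$S_j(M) := \sum_{m \sim M} \Lambda(m)\lambda(m) e(j m^\gamma) \ll M^{1-2\eta}, \quad 1 \le j \le J, \quad M \ll N^c.$$

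Next, I would apply Heath-Brown's identity (Lemma \ref{Heath}) to $S_j(M)$, reducing to type I sums $K = \sum_{m\sim X}\sum_{n\sim Y} a_m \lambda(mn) e(j(mn)^\gamma)$ for $Y \ge z$, and type II sums $L = \sum_{m\sim X}\sum_{n\sim Y} a_m b_n \lambda(mn) e(j(mn)^\gamma)$ for $u \le Y \le v$. In both cases, Lemma \ref{multhecke} splits $\lambda(mn) = \sum_{d \mid (m,n)} \mu(d)\lambda(m/d)\lambda(n/d)$, at the mild cost of an outer sum over $d$ truncatable at a small power of $N$. Perron's formula (Lemma \ref{Perron}) then decouples the $m$- and $n$-variables inside $e(j(mn)^\gamma)$ by expressing the latter as a contour integral whose integrand factorises across the two variables.

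For type II sums, after a further Cauchy-Schwarz step to handle the bilinear Hecke coefficients (using the second-moment bound for $\lambda$ implicit in \eqref{squaremean}), one arrives at trilinear exponential sums with monomial phases, which are bounded by Robert-Sargos (Lemma \ref{monomials}), with the attendant near-coincidence count supplied by Lemma \ref{spacinglemma}. For type I sums, where the inner variable $n$ has large range, I apply Jutila's transformation (Lemma \ref{expsumtransformation}) to the sum $\sum_{n \sim Y} \lambda(n) e(j(mn)^\gamma)$, converting it into a shorter dual exponential sum over Hecke coefficients $a(n)e(-n\overline{l}/k)$, which is then estimated by van der Corput's Lemma \ref{vandercorput} (or, where short enough, trivially); this is the step that yields the saving unavailable to the pure exponential-sum argument of \cite{LiRi}.

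The principal obstacle will be the simultaneous optimisation of the Heath-Brown parameters $u, v, z$, the Vaaler truncation $J$, and the Jutila parameters $l/k$ and $m_1$, each of which governs a distinct error term. Each invocation of Lemma \ref{expsumtransformation} moreover requires verifying the hypotheses \eqref{otto} and \eqref{M1M2cond} on the rational approximation of $g'$ and on the length of the pieces $M_j$, which constrains how the phase $j m^\gamma$ may be dualised. Pushing the exponent $15/13$ of \cite{LiRi} up to $8/7$ leaves only a narrow window for these choices: misbalancing them causes the Robert-Sargos error, the Cauchy-Schwarz off-diagonal, or the Jutila B-process error to dominate. Achieving this balance, and certifying Jutila's hypotheses at every application, is the technical heart of the argument.
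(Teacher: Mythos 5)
Your overall architecture matches the paper's (smooth part plus $\psi$-part, Vaaler, Heath-Brown's identity, multiplicativity of $\lambda$, Perron to decouple the ranges, Robert--Sargos in some ranges, Jutila's transformation for the type I sums with large inner range), but two of your choices break the argument. First, the truncation $J=N^{o(1)}$ cannot work: Lemma \ref{deltaest} produces a zero-frequency error $\asymp J^{-1}N^{c}$ from the $\delta$-term, and since $\delta$ only \emph{majorizes} $|\psi-\psi^*|$, absolute values have already been taken and no cancellation from Lemma \ref{cuspatprimes} is available there; one is forced to take $J\asymp N^{c-1+\eta}$. With that $J$, your reduction to an individual bound $S_j(M)\ll M^{1-2\eta}$ for each $j$ is fatally lossy: summing trivially over the $\asymp N^{c-1}$ frequencies would require each $S_j$ to save a factor $\approx N^{c-1}\approx M^{1/8}$ (for $c$ near $8/7$), which is far beyond anything the available exponential-sum technology gives. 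The sum over the frequencies must be retained, i.e.\ one proves $\sum_{h\sim H}\bigl|\sum_{n}\Lambda(n)\lambda(n)e(hn^{\gamma})\bigr|\ll N^{1-\eta}$ with $H$ as large as $N^{1-\gamma+\eta}$, and $h$ must enter the subsequent estimates as a genuine third summation variable; the trilinear structure (in Lemma \ref{monomials} and in the spacing analysis) is precisely what produces the required \emph{average} saving over $h$.

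Second, your type I treatment --- Jutila's transformation applied to the whole sum $\sum_{n\sim Y}\lambda(n)e(h(mn)^{\gamma})$ followed by van der Corput on the dual sum, with the $h$- and $m$-sums then estimated trivially --- is exactly the ``direct'' route via Lemma \ref{expsumhecke}, and it only yields the admissible range $Y\gg N^{8/3-2\gamma}$, hence $c<9/8$, not $c<8/7$. To reach $8/7$ one must dissect the $n$-range into Farey arcs $l/k$ (so that the hypotheses \eqref{otto} and \eqref{M1M2cond} can be met arc by arc), transform each short arc by Lemma \ref{expsumtransformation}, and then apply Cauchy--Schwarz in the \emph{dual} variable, after which the $h$-, $m$- and $l$-averaging is exploited through a near-coincidence count for the monomial points $\bigl(hd^{2\gamma}m^{\gamma}\gamma k/|l|\bigr)^{1/(2(1-\gamma))}k^{-1}$ via Lemma \ref{spacinglemma}. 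You instead attach the Cauchy--Schwarz-plus-spacing step to the type II sums, where it is not the operative tool (those are handled by a Weyl-shift argument with arbitrary bounded coefficients, the Hecke values being absorbed by Deligne's bound rather than by \eqref{squaremean}). Without the Cauchy--Schwarz and spacing analysis on the \emph{transformed} type I sums, the exponent $8/7$ is out of reach.
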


In this section, we reduce the left-hand side of \eqref{goal} to  exponential sums.  Throughout the sequel, let $\gamma=1/c$. Then $\left[n^c\right]=m$ is equivalent to
$$
-(m+1)^{\gamma}< -n\le -m^{\gamma}.
$$
Therefore, we have
\begin{equation} \label{rewritten}
\sum\limits_{n\le N} \Lambda\left(\left[n^c\right]\right)\lambda\left(\left[n^c\right]\right) =\sum\limits_{m\le N^c} \left(\left[-m^{\gamma}\right]-\left[-(m+1)^{\gamma}
\right]\right)\Lambda(m)\lambda(m)+O(\log N).
\end{equation}
Breaking into dyadic intervals, it hence suffices to prove that
\begin{equation} \label{S}
S:=\sum\limits_{n\sim N^c} \left(\left[-n^{\gamma}\right]-\left[-(n+1)^{\gamma}
\right]\right)\Lambda(n)\lambda(n)\ll N\exp(-C\sqrt{\log N})
\end{equation}
for any $N>1$. We write the above sum $S$ in the form
\begin{equation} \label{S12}
S=S_1+S_2,
\end{equation}
where
$$
S_1=\sum\limits_{n\sim N^c} \left((n+1)^{\gamma}-n^{\gamma}\right)\Lambda(n)\lambda(n)
$$
and
$$
S_2=\sum\limits_{n\sim N^c} \left(\psi\left(-(n+1)^{\gamma}\right)-\psi\left(-n^{\gamma}\right)\right)\Lambda(n)\lambda(n),
$$
with $\psi(n)$ being the saw-tooth function in Lemma~\ref{Valer}.  Using partial summation and the bounds
$$
(x+1)^{\gamma}-x^{\gamma}\ll x^{\gamma-1} \quad \mbox{and} \quad
\frac{\dif}{\dif x} \left((x+1)^{\gamma}-x^{\gamma}\right) \ll x^{\gamma-2}
$$
for $x\ge 1$,
we deduce from Lemma \ref{cuspatprimes} that
$$
S_1\ll N\exp(-C\sqrt{\log N}),
$$
where the implied $\ll$-constant depends only on $\gamma$, $C$ and the cusp form. Our treatment of the sum $S_2$ begins like in \cite{GK}. By Lemma \ref{Valer}, we have the following. For any $J>0$ there exist functions $\psi^*$ and $\delta$, with $\delta$ non-negative, such that
$$
\psi(x)=\psi^*(x)+O( \delta(x) ),
$$
where
$$
\psi^*(x)=\sum\limits_{1\le |j|\le J} a(j)e(jx), \quad
\delta(x)=\sum\limits_{|j|\le J} b(j)e(jx)
$$
with
$$
a(j)\ll j^{-1}, \quad b(j)\ll J^{-1}.
$$
Consequently,
\begin{eqnarray*}
S_2&=&\sum\limits_{n\sim N^c} \left(\psi^*\left(-(n+1)^{\gamma}\right)-\psi^*\left(-n^{\gamma}\right)\right)\Lambda(n)\lambda(n)+
O\left((\log N) \sum\limits_{n\sim N^c} \left(\delta\left(-(n+1)^{\gamma}\right)+\delta\left(-n^{\gamma}\right)\right)  \right)\\
&=& S_3+O(S_4),
\end{eqnarray*}
say. We fix a small $\eta>0$ and set
\begin{equation} \label{J}
J:=N^{c-1+\eta}.
\end{equation}
Then, using Lemma \ref{deltaest}, we obtain
$$
S_4\ll N^{1-\eta/2}
$$
if $1<c<2$. \newline

The remaining task is to prove that
$$
S_3\ll N^{1-\eta/2},
$$
provided that $\eta$ is sufficiently small. We write
$$
S_3=\sum\limits_{1\le |j|\le J} \sum\limits_{n\sim N^c}  \Lambda(n)\lambda(n)a(j)\phi_j(n)e(-jn^{\gamma}),
$$
where $\phi_j(x)=1-e(j(x^{\gamma}-(x+1)^{\gamma}))$. Using partial summation and the bounds
$$
\phi_j(x)\ll jx^{\gamma-1} \quad \mbox{and} \quad \frac{\dif}{\dif x} \phi_j(x)\ll jx^{\gamma-2},
$$
we deduce that it suffices to prove that
$$
\sum\limits_{1\le |j|\le J} \left|\sum\limits_{n\sim N^c}  \Lambda(n)\lambda(n)e(-jn^{\gamma})\right| \ll N^{c-\eta/2}.
$$
Replacing $N^{c}$ by $N$, taking the definition of $J$ in \eqref{J} into account, dividing the summation interval $1\le |j|\le J$ into $O(\log 2J)$ dyadic intervals, and using the facts that $e(-x)=\overline{e(x)}$ and the Hecke eigenvalues are real, we see that the above bound holds if
\begin{equation} \label{goal1}
\sum\limits_{h\sim H} \left| \sum\limits_{n\sim N} \Lambda(n)\lambda(n)e\left(hn^{\gamma}\right)\right| \ll N^{1-\eta}
\end{equation}
for any $N\ge 1$ and $1\le H\le N^{1-\gamma+\eta}$. The following lemma reduces the term on the left-hand side of \eqref{goal1} to trilinear exponential sums.

\begin{lemma} \label{bilinearsums}
Suppose that $u$, $v$ and $z$ are real parameters satisfying the conditions
\begin{equation} \label{uvzcond}
3\le u<v<z<2N,\ z-1/2\in \mathbbm{N},\ z\ge 4u^2,\ N\ge 32z^2u,\ v^3\ge 64N. \end{equation}
Suppose further that $1\le Y\le N$, $XY=N$ and $H\ge 1$. Assume that $A_m$, $B_n$ and $C_h$ are complex numbers.  For $d\in \mathbbm{N}$ set
\begin{equation} \label{Kddef}
K_d:=\mathop{\sum\limits_{m\sim X/d}\ \sum_{ n\sim Y/d}}_{mn\sim N/d^2}\ \sum\limits_{h\sim H} \
A_m C_h \lambda(n)e\left(hd^{2\gamma}m^{\gamma}n^{\gamma}\right)
\end{equation}
and
\begin{equation} \label{Lddef}
L_d:=\mathop{\sum\limits_{m\sim X/d}\ \sum_{n\sim Y/d}}_{mn\sim N/d^2}\ \sum\limits_{h\sim H}\  A_{m} B_n C_h e\left(hd^{2\gamma}m^{\gamma}n^{\gamma}\right).
\end{equation}
Then the estimate \eqref{goal1} holds if we uniformly have
\begin{equation} \label{Kdbound}
K_d\ll N^{1-3\eta}d^{-1}\ \  \mbox{ for } Y\ge z,\ d\le 2Y \; \mbox{and any complex} \; A_m,C_h\ll 1
\end{equation}
and
\begin{equation} \label{Ldbound}
L_d\ll N^{1-3\eta}d^{-1}\ \  \mbox{ for } u\le Y\le v,\ d\le 2Y \; \mbox{and any complex} \; A_m,B_n,C_h\ll 1.
\end{equation}
\end{lemma}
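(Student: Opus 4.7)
The plan is to linearize the absolute value, feed the result into Heath--Brown's combinatorial identity for $\Lambda$ (Lemma~\ref{Heath}), and then use the Hecke multiplicativity relation of Lemma~\ref{multhecke} to extract the common divisor $d=\gcd(m,n)$; the two Hecke values not retained by the definitions of $K_d,L_d$ are absorbed into the coefficients through Deligne's pointwise bound.

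\textbf{Linearization.} For each $h\sim H$ choose $C_h\in\mathbb{C}$ with $|C_h|\le 1$ satisfying $|T(h)|=C_hT(h)$, where $T(h):=\sum_{n\sim N}\Lambda(n)\lambda(n)e(hn^\gamma)$. The left-hand side of \eqref{goal1} then equals $\sum_{n\sim N}\Lambda(n)f(n)$ with
\[
f(n)=\lambda(n)\sum_{h\sim H}C_he(hn^\gamma).
\]
Applying Lemma~\ref{Heath} to this $f$ with the parameters $u,v,z$ of \eqref{uvzcond}, it suffices to establish the associated bilinear bounds $K,L\ll N^{1-2\eta}$, uniformly for arbitrary $|a_m|,|b_n|\le 1$.

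\textbf{Extracting $\gcd(m,n)$.} In $K$ (and similarly in $L$), expand $f(mn)=\lambda(mn)\sum_h C_he(h(mn)^\gamma)$ and apply Lemma~\ref{multhecke} to $\lambda(mn)$. Interchanging the $d$-sum to the outside and substituting $m=dm'$, $n=dn'$ (so that $(mn)^\gamma=d^{2\gamma}(m'n')^\gamma$), the sum $K$ takes the form
\[
K=\sum_{d}\mu(d)\mathop{\sum_{m'\sim X/d}\sum_{n'\sim Y/d}}_{m'n'\sim N/d^2}\sum_{h\sim H}\bigl(a_{dm'}\lambda(m')\bigr)\,C_h\,\lambda(n')\,e\!\bigl(hd^{2\gamma}(m'n')^\gamma\bigr),
\]
and analogously $L$ carries an additional factor $b_{dn'}$ multiplying $\lambda(n')$.

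\textbf{Absorption and summation over $d$.} Deligne's bound \eqref{Ramanujanpetersson} gives $|\lambda(k)|\le d(k)\ll N^\varepsilon$ throughout the range. Setting $A_{m'}:=N^{-\varepsilon}a_{dm'}\lambda(m')$ (and, in the type~II case, also $B_{n'}:=N^{-\varepsilon}b_{dn'}\lambda(n')$) produces coefficients of modulus $\ll 1$, so that the inner triple sum is exactly $K_d$, respectively $L_d$, up to a factor $N^\varepsilon$ (respectively $N^{2\varepsilon}$). Since $d\mid\gcd(m,n)$ forces $d\le 2\min(X,Y)\le 2Y$, the hypotheses \eqref{Kdbound}, \eqref{Ldbound} combined with $\sum_{d\le 2Y}d^{-1}\ll\log N$ yield
\[
|K|,\,|L|\ll N^{2\varepsilon}\sum_{d\le 2Y}\frac{N^{1-3\eta}}{d}\ll N^{1-3\eta+2\varepsilon}\log N\ll N^{1-2\eta},
\]
provided $\varepsilon<\eta/2$. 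Feeding this back into Lemma~\ref{Heath} completes the reduction.

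\textbf{Main obstacle.} No analytic difficulty arises; the argument is essentially bookkeeping. The only delicate point is that $\lambda(m'),\lambda(n')$ are only $O(N^\varepsilon)$-bounded rather than $O(1)$, which is precisely why the hypotheses \eqref{Kdbound}, \eqref{Ldbound} demand the stronger decay $d^{-1}N^{1-3\eta}$ in place of $d^{-1}N^{1-2\eta}$: the surplus $\eta$ absorbs the two Deligne losses together with the logarithm coming from the $d$-summation.
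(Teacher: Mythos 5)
Your proposal is correct and follows essentially the same route as the paper: linearize the absolute value, apply Heath--Brown's identity, use Lemma~\ref{multhecke} to pull out $d=\gcd(m,n)$, and absorb the leftover Hecke values into the coefficients via Deligne's bound before summing over $d\le 2Y$. Your closing remark correctly identifies why the hypotheses carry the exponent $1-3\eta$ and the factor $d^{-1}$, a point the paper leaves implicit.
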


\begin{proof} We first write
$$
\sum\limits_{h\sim H} \left| \sum\limits_{n\sim N} \Lambda(n)\lambda(n)e\left(hn^{\gamma}\right)\right| =
\sum\limits_{h\sim H} c_h \sum\limits_{n\sim N} \Lambda(n)\lambda(n)e\left(hn^{\gamma}\right),
$$
where $c_h$ are suitable complex numbers with $|c_h|=1$. We further set
$$
f(n)=\lambda(n)\sum\limits_{h\sim H} c_he\left(hn^{\gamma}\right)
$$
so that
$$
\sum\limits_{h\sim H} \left| \sum\limits_{n\sim N}
\Lambda(n)\lambda(n)e\left(hn^{\gamma}\right)\right|=\sum\limits_{n\sim N}
\Lambda(n)f(n).
$$
Now, by Lemma \ref{Heath}, the bound \eqref{goal1} holds if
\begin{equation} \label{KL}
K\ll N^{1-2\eta}\quad \mbox{and} \quad L\ll N^{1-2\eta}
\end{equation}
under the conditions of the same lemma. Here $K$ and $L$ are defined as in \eqref{Kdef} and \eqref{Ldef}. We may rewrite these terms in the form
$$
K=\mathop{\sum\limits_{m\sim X} \sum_{n\sim Y}}_{mn\sim N} \sum\limits_{h\sim H} a_mc_h\lambda(mn)e\left(h(mn)^{\gamma}\right)
$$
and
$$
L=\mathop{\sum\limits_{m\sim X} \sum_{n\sim Y}}_{mn\sim N} \sum\limits_{h\sim H} a_m b_nc_h\lambda(mn)e\left(h(mn)^{\gamma}\right).
$$
Using the multiplicative property of Hecke eigenvalues, Lemma \ref{multhecke}, we have
\begin{equation} \label{rewriteK}
K=\sum\limits_{d\le 2Y} \mu(d) \mathop{\sum\limits_{m\sim X/d}\ \sum_{n\sim Y/d}}_{mn\sim N/d^2}\ \sum\limits_{h\sim H} \
a_{dm} \lambda(m) c_h \lambda(n) e\left(hd^{2\gamma}m^{\gamma}n^{\gamma}\right)
\end{equation}
and
\begin{equation} \label{rewriteL}
L=\sum\limits_{d\le 2Y} \mu(d) \mathop{\sum\limits_{m\sim X/d}\ \sum_{ n\sim Y/d}}_{mn\sim N/d^2}\ \sum\limits_{h\sim H} \
a_{dm} \lambda(m) b_{dn} \lambda(n) c_h e\left(hd^{2\gamma}m^{\gamma}n^{\gamma}\right).
\end{equation}
Now, \eqref{KL} follows from \eqref{Kdbound}, \eqref{Ldbound},\eqref{rewriteK}, \eqref{rewriteL} and the bound
$\lambda(n)\ll n^{\varepsilon}$, the Ramanujan-Petersson conjecture, proved by Deligne \cite{Deli}.
 \end{proof}

In the following sections, we shall estimate the terms $K_d$ and $L_d$.

\section{Estimation of $L_d$}
In this section, we estimate $L_d$ defined in \eqref{Lddef}.

\begin{lemma} \label{QLdlemma}
Let $Q$ be any positive integer and $\varepsilon$
be any positive real number.
Then
\begin{equation} \label{QLd}
 \vert L_d\vert^2 \ll \left(QX\left(HX^{\gamma}Y^{\gamma}Q^{-1}\right)^{1/2} \left(H^2Q^{-1}Y^2+HY\right)+QX^{2-\gamma} \left(HY^{2-\gamma}+HYX^{\gamma}\right)\right)d^{-2}N^{\varepsilon}, \end{equation}
 where the implied $\ll$-constant depends only on $\varepsilon$.
\end{lemma}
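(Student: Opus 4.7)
\textbf{Plan of proof of Lemma \ref{QLdlemma}.}

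The strategy is to apply Cauchy--Schwarz in the $m$-variable, expand the resulting square into a quadruple sum over $(h,h',n,n')$ with an inner one-dimensional exponential sum in $m$, and then analyze that inner structure by combining the spacing estimate (Lemma \ref{spacinglemma}) with the Van der Corput estimate (Lemma \ref{vandercorput}). The parameter $Q$ will appear as a threshold separating ``diagonal'' from ``off-diagonal'' quadruples.

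Writing $M=X/d$ and $N'=Y/d$, set
\[
U(m):=\sum_{n\sim N'}\sum_{h\sim H} B_n C_h\, e\bigl(h d^{2\gamma} m^{\gamma} n^{\gamma}\bigr),\qquad L_d=\sum_{m\sim M} A_m U(m).
\]
First, by Cauchy--Schwarz and $|A_m|\ll 1$,
\[
|L_d|^2 \ll M\sum_{m\sim M}|U(m)|^2.
\]
Expanding the square and interchanging the order of summation,
\[
\sum_{m}|U(m)|^2 \;=\;\sum_{h,h',n,n'} B_n\bar B_{n'}C_h\bar C_{h'}\,\Phi(h,h',n,n'),
\]
where $\Phi(h,h',n,n')=\sum_{m\sim M} e(\mu m^{\gamma})$ with $\mu:=d^{2\gamma}(h n^{\gamma}-h' n'^{\gamma})$, and the $m$-range is suitably truncated by the constraints $mn,mn'\sim N/d^2$.

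Next, I split the quadruples according to the size of $|h n^{\gamma}-h' n'^{\gamma}|$, taking as threshold $T:=HN'^{\gamma}/Q$. On the \emph{diagonal} range $|hn^{\gamma}-h'n'^{\gamma}|\le T$, Lemma \ref{spacinglemma} applied with $\alpha=1,\beta=\gamma$ (and parameter $\Delta=1/Q$) bounds the number of quadruples by $\ll HN'\log(2HN')+H^2N'^2/Q$, and I estimate $|\Phi|$ trivially by $M$. On the \emph{off-diagonal} range, I apply Lemma \ref{vandercorput} with $f(m)=\mu m^{\gamma}$, $|f''(m)|\asymp |\mu|M^{\gamma-2}$, to obtain
\[
|\Phi|\;\ll\; |\mu|^{1/2}M^{\gamma/2}+|\mu|^{-1/2}M^{1-\gamma/2},
\]
and then sum dyadically over $|\mu|\sim S$ with $S$ ranging from $d^{2\gamma}T$ upward, counting the quadruples in each dyadic shell once more by Lemma \ref{spacinglemma}. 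Observe that the ``critical scale'' $|\mu|\asymp d^{2\gamma}T$ gives precisely $|\mu|^{1/2}M^{\gamma/2}\asymp (HX^{\gamma}Y^{\gamma}/Q)^{1/2}$, which reproduces the factor appearing in the first main term of the lemma's bound; the diagonal count $HN'+H^2N'^2/Q$ reproduces the factor $(H^2Y^2/Q+HY)$. After multiplying by the outer Cauchy factor $M=X/d$ and collecting the $d$-dependence, this gives the first main term. The off-diagonal dyadic sums involving $|\mu|^{-1/2}M^{1-\gamma/2}$ together with the trivial bound on the largest dyadic range (where Van der Corput is superseded by the trivial estimate $|\Phi|\le M$) assemble into the second main term $QX^{2-\gamma}(HY^{2-\gamma}+HYX^{\gamma})d^{-2}$.

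The main obstacle is the off-diagonal dyadic summation: one has to track precisely the transition $|\mu|\asymp M^{2-\gamma}$ (above which the trivial bound for $\Phi$ outperforms Van der Corput) and to verify that in every dyadic shell the product of the Lemma \ref{spacinglemma} count with the chosen bound on $\Phi$ is dominated by one of the four terms in the stated estimate. A secondary technical point is the choice of exponents $\alpha,\beta$ in Lemma \ref{spacinglemma} so that the condition $|(\tilde m/m)^{\alpha}-(\tilde n/n)^{\beta}|<\Delta$ translates cleanly into the desired threshold on $|hn^{\gamma}-h'n'^{\gamma}|$; setting $\alpha=1,\beta=\gamma$ and matching $\Delta=1/Q$ gives exactly the count $HN'+H^2N'^2/Q$ needed in the final bound.
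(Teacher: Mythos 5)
Your plan — Cauchy--Schwarz in $m$, then the spacing lemma plus van der Corput on the inner sum $\Phi(\mu)=\sum_{m}e(\mu m^{\gamma})$ — does not close quantitatively, and the gap is not cosmetic. Two of your contributions exceed the right-hand side of \eqref{QLd}. First, on the near-diagonal range $0<|hn^{\gamma}-h'n'^{\gamma}|\le HN'^{\gamma}/Q$ you bound $|\Phi|$ trivially by $M\asymp X/d$, so the $\Delta H^2N'^2$ part of the spacing count contributes $\asymp H^2X^2Y^2/(Qd^4)$ to $|L_d|^2$; this is dominated by the stated bound only when $X^2\ll QHX^{\gamma}Y^{\gamma}d^4$, which fails in the regime where the lemma is actually used ($Y\le 4N^{1/3}$, hence $X\ge N^{2/3}$, while $QHN^{\gamma}\ll N^{10/9}$ for the optimizing $Q$ of Lemma \ref{Ldlemma}), and fails badly for $Q=1$, a value for which the lemma is nonetheless asserted. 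Second, your off-diagonal dyadic sum is not controlled by the ``critical scale'' $|\mu|\asymp d^{2\gamma}HN'^{\gamma}/Q$: the product $(\delta F)^{1/2}\cdot \delta H^2N'^2$ (with $F=HX^{\gamma}Y^{\gamma}$ and $|\mu|\asymp \delta\cdot d^{2\gamma}HN'^{\gamma}$) increases with $\delta$ and is dominated by the top scale $\delta\asymp 1$, yielding $\asymp XF^{1/2}H^2Y^2d^{-3}$, which is larger than the corresponding term $X(F/Q)^{1/2}H^2Y^2d^{-2}$ of \eqref{QLd} by a factor of order $Q^{1/2}$. A concrete check: with $d=H=1$, $\gamma=0.9$, $Y=N^{1/3}$, $X=N^{2/3}$, $Q=N^{0.1}$, the right-hand side of \eqref{QLd} is $\ll N^{1.77}$, whereas your diagonal alone gives $N^{1.9}$ and your top off-diagonal scale gives about $N^{1.78}$.

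The structural reason is that \eqref{QLd} has the signature of a Weyl--van der Corput (A-process) estimate, not of a direct ``spacing plus second-derivative test'' estimate: the factor $(HX^{\gamma}Y^{\gamma}Q^{-1})^{1/2}$ is the van der Corput main term for a phase whose \emph{total variation} has already been reduced to $F/Q$, it multiplies the reduced count $(H^2Y^2/Q+HY)$, and the price paid is the prefactor $Q$; the second group $QX^{2-\gamma}(HY^{2-\gamma}+HYX^{\gamma})$ collects the resulting diagonal and first-derivative (Kusmin--Landau) contributions. That reduction of the phase to size $F/Q$ before any exponential-sum estimate is applied is exactly what Heath-Brown's Section 4 argument (which the paper invokes) achieves via Cauchy--Schwarz followed by Weyl differencing with a shift length tied to $Q$, and it is the ingredient your argument is missing: in your setup every generic quadruple still carries the full phase $F$, and no subsequent choice of threshold can recover the lost factor $Q^{1/2}$. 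To repair the proof you would need to insert the Weyl--van der Corput inequality (with the differencing parameter chosen so that the differenced phases are $\ll F/Q$) before applying Lemma \ref{spacinglemma} and Lemma \ref{vandercorput}, and treat the small-phase range with the first-derivative test rather than the trivial bound.
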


\begin{proof} This lemma follows after a slight modification of the estimations in section 4 of \cite{Hea2}. \end{proof}

From Lemma \ref{QLdlemma}, we deduce the following result.

\begin{lemma} \label{Ldlemma}
If $H,N\ge 1$, $1\le Y\le 2N$ and $1\le d\le 2Y$, then
\begin{equation} \label{Ld}
\begin{split}
\vert L_d \vert^2 \ll \Large( N^{1+\gamma/2}H^{3/2}&+N^{2-\gamma}H+N^2HY^{-1}+
N^{4/3+\gamma/3}H^{2}Y^{1/3}  \\
& +N^{2/3+2\gamma/3}H^2Y^{2/3}+
N^{4/3-2\gamma/3}H^2Y^{4/3} \Large) d^{-2}N^{\varepsilon},
\end{split}
\end{equation}
where the implied $\ll$-constant depends only on $\varepsilon$.
\end{lemma}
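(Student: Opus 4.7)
The plan is to unfold the bound in Lemma~\ref{QLdlemma} into four explicit pieces as functions of $Q$, then choose $Q$ optimally.  First, distributing the product in Lemma~\ref{QLdlemma} and using the relation $XY=N$ to eliminate $X$, the right-hand side rewrites as $\bigl(T_1(Q)+T_2(Q)+T_3(Q)+T_4(Q)\bigr)d^{-2}N^{\varepsilon}$, where
\[
T_1(Q)=Q^{-1/2}N^{1+\gamma/2}YH^{5/2},\qquad T_2(Q)=Q^{1/2}N^{1+\gamma/2}H^{3/2},
\]
\[
T_3(Q)=Q\,N^{2-\gamma}H,\qquad T_4(Q)=Q\,N^{2}HY^{-1}.
\]
Here $T_1$ is decreasing in $Q$ while $T_2,T_3,T_4$ are increasing.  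Specialising to $Q=1$ immediately produces the first three terms $N^{1+\gamma/2}H^{3/2}$, $N^{2-\gamma}H$, $N^{2}HY^{-1}$ appearing on the right-hand side of \eqref{Ld}.

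The remaining three terms are forced upon us by balancing the decreasing term $T_1(Q)$ against the fastest-growing one, namely $T_4(Q)$.  The equation $T_1(Q)=T_4(Q)$ has solution $Q^{*}:=N^{-2/3+\gamma/3}Y^{4/3}H$, and a direct substitution yields
\[
T_1(Q^{*})=T_4(Q^{*})=N^{4/3+\gamma/3}H^{2}Y^{1/3},
\]
\[
T_2(Q^{*})=N^{2/3+2\gamma/3}H^{2}Y^{2/3},\qquad T_3(Q^{*})=N^{4/3-2\gamma/3}H^{2}Y^{4/3},
\]
which are exactly the last three summands appearing in \eqref{Ld}.  I would therefore apply Lemma~\ref{QLdlemma} with $Q=\max\{1,\lceil Q^{*}\rceil\}$; the rounding to an integer value costs only an absolute constant.

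Two regimes have to be treated separately.  When $Q^{*}\ge 1$, the three balanced terms above dominate and we are done.  When $Q^{*}<1$, the choice $Q=1$ recovers $T_2(1)$, $T_3(1)$, $T_4(1)$ in the list, but leaves behind the term $T_1(1)=N^{1+\gamma/2}YH^{5/2}$, which does not literally appear in \eqref{Ld}.  The key observation is that the defining inequality $Q^{*}<1$ rearranges (after cubing) to $Y^{4}H^{3}<N^{2-\gamma}$, which is exactly the inequality needed to verify
\[
T_1(1)\le N^{4/3+\gamma/3}H^{2}Y^{1/3};
\]
thus $T_1(1)$ is absorbed into the fourth summand of \eqref{Ld}.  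This final verification---ensuring that the small-$Q$ regime does not produce a stray term outside the listed bound---is the one step requiring slightly more than routine bookkeeping; the remainder of the argument consists of substitution.
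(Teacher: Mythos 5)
Your proposal is correct and follows essentially the same route as the paper: your balancing value $Q^{*}=N^{-2/3+\gamma/3}Y^{4/3}H$ is exactly the paper's choice $Q=1+\bigl[HX^{(\gamma-2)/3}Y^{(\gamma+2)/3}\bigr]$ after substituting $X=N/Y$, and your two-regime analysis (including the absorption of $T_1(1)$ into the fourth summand when $Q^{*}<1$) correctly fills in the ``short calculation'' the paper leaves to the reader.
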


\begin{proof} To optimize the estimate \eqref{QLd} in Lemma \ref{QLdlemma}, we choose
$$
Q:=1+\left[HX^{(\gamma-2)/3}Y^{(\gamma+2)/3}\right].
$$
Then \eqref{Ld} follows from \eqref{QLd} and $XY=N$ by a short calculation.
\end{proof}

Lemma \ref{Ldlemma} brings us into a position to formulate a condition under which the desired estimate $L_d\ll N^{1-3\eta}d^{-1}$ holds.

\begin{lemma} \label{Ldcondlemma}
For every sufficiently small fixed $\eta>0$, we have
\[ L_d\ll N^{1-3\eta}d^{-1}, \]
provided that $\gamma>5/6$, $1\le H\le N^{1-\gamma+\eta}$, $1\le d\le 2Y$ and
\begin{equation} \label{LYcond}
N^{1-\gamma+100\eta}\le Y\le N^{5\gamma-4-100\eta}.
\end{equation}
\end{lemma}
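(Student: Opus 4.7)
The plan is to deduce the result directly from the six-term bound \eqref{Ld} of Lemma \ref{Ldlemma}. It suffices to show that, under the stated hypotheses, each of the six quantities inside the parenthesis of \eqref{Ld} is $\ll N^{2-6\eta-2\varepsilon}$ for a suitably small $\varepsilon$; taking square roots then yields $|L_d|\ll N^{1-3\eta}d^{-1}$ as required.

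I would first substitute $H\le N^{1-\gamma+\eta}$ into every term. The two $Y$-free terms then become at most $N^{5/2-\gamma+3\eta/2}$ and $N^{3-2\gamma+\eta}$, and each is $\le N^{2-6\eta}$ as soon as $\gamma>1/2$ and $\eta$ is small; this is comfortably covered by $\gamma>5/6$. The third term $N^{2}HY^{-1}\le N^{3-\gamma+\eta}Y^{-1}$ is $\le N^{2-6\eta}$ whenever $Y\ge N^{1-\gamma+7\eta}$, which is supplied with room to spare by the lower bound $Y\ge N^{1-\gamma+100\eta}$.

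The three remaining $Y$-dependent terms, after inserting $H\le N^{1-\gamma+\eta}$, translate into upper bounds on $Y$:
\begin{align*}
N^{4/3+\gamma/3}H^{2}Y^{1/3}\le N^{2-6\eta}\ &\Longleftrightarrow\ Y\le N^{5\gamma-4-O(\eta)},\\
N^{2/3+2\gamma/3}H^{2}Y^{2/3}\le N^{2-6\eta}\ &\Longleftrightarrow\ Y\le N^{2\gamma-1-O(\eta)},\\
N^{4/3-2\gamma/3}H^{2}Y^{4/3}\le N^{2-6\eta}\ &\Longleftrightarrow\ Y\le N^{2\gamma-1-O(\eta)}.
\end{align*}
Since $\gamma<1$ forces $5\gamma-4<2\gamma-1$, the first constraint is the binding one, and it coincides with the hypothesised $Y\le N^{5\gamma-4-100\eta}$ (the $100\eta$ slack easily absorbs the implicit $O(\eta)$ shifts). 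Finally, the admissible window $\bigl[N^{1-\gamma+100\eta},N^{5\gamma-4-100\eta}\bigr]$ is non-empty precisely when $6\gamma>5+O(\eta)$, which is the role of the hypothesis $\gamma>5/6$.

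No real obstacle remains once Lemma \ref{Ldlemma} is in hand; the proof is essentially bookkeeping. The only point of genuine tension is between the lower bound on $Y$ forced by the term $N^{2}HY^{-1}$ and the upper bound forced by $N^{4/3+\gamma/3}H^{2}Y^{1/3}$, and these two constraints meet exactly at the threshold $\gamma=5/6$, which both explains the appearance of this numerical hypothesis and pinpoints where any improvement of the method would have to come from.
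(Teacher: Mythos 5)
Your proposal is correct and follows exactly the route the paper intends: the paper's proof of this lemma is simply the remark that it ``follows from Lemma \ref{Ldlemma} by a short calculation,'' and your term-by-term verification (substituting $H\le N^{1-\gamma+\eta}$, isolating the binding constraints $Y\ge N^{1-\gamma+O(\eta)}$ and $Y\le N^{5\gamma-4-O(\eta)}$, and noting that these meet at $\gamma=5/6$) is precisely that calculation, carried out accurately.
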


\begin{proof}
This Lemma follows from Lemma \ref{Ldlemma} by a short calculation and is analogous to Lemma 4 in \cite{Hea2}.
\end{proof}

\section{Estimation of $K_d$, first method}

We now establish some estimates for $K_d$, defined in \eqref{Kddef}, which are favorable if $Y$ is not too large. In this case, we ignore the special nature of the Hecke eigenvalues $\lambda(n)$ appearing in the sum $K_d$ and treat $K_d$ as a trilinear sum with arbitrary coefficients, like $L_d$ in the previous section. \newline

For $Y$ of medium size, we use the following result.

\begin{lemma} \label{Kdcondlemma0}
For every sufficiently small fixed $\eta>0$, we have
\[ K_d\ll N^{1-3\eta}d^{-1}, \]
provided that $\gamma>5/6$, $1\le H\le N^{1-\gamma+\eta}$, $1\le d\le 2Y$ and
\begin{equation} \label{KYcond}
N^{5-5\gamma+100\eta}\le Y\le N^{\gamma-100\eta}.
\end{equation}
\end{lemma}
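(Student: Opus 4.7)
The plan is to reduce the bound for $K_d$ to the bound for $L_d$ by absorbing the Hecke eigenvalues into an arbitrary bounded sequence. Since the Deligne bound \eqref{Ramanujanpetersson} gives $|\lambda(n)| \ll N^{\varepsilon}$ for all $n \le 2N$, the sum $K_d$ in \eqref{Kddef} is of exactly the same form as $L_d$ in \eqref{Lddef}, with $A_m$ and $C_h$ arbitrary and $B_n := N^{-\varepsilon} \lambda(n) \ll 1$. Hence every estimate established for $L_d$ in Lemma \ref{Ldlemma} applies equally to $K_d$, at the cost of a harmless $N^{\varepsilon}$ factor.

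However, applying \eqref{Ld} directly only covers the range \eqref{LYcond}, not \eqref{KYcond}. The crucial additional input is the symmetry of $L_d$ in the variables $m$ and $n$: both carry arbitrary bounded coefficients in \eqref{Lddef}, so the derivations of Lemmas \ref{QLdlemma} and \ref{Ldlemma} may be repeated verbatim with the roles of $m$ and $n$, and correspondingly of $X$ and $Y$, interchanged. After substituting $X = N/Y$, this produces the companion estimate
\begin{align*}
|L_d|^2 &\ll \Big( N^{1+\gamma/2} H^{3/2} + N^{2-\gamma} H + N H Y \\
&\qquad + N^{5/3 + \gamma/3} H^2 Y^{-1/3} + N^{4/3 + 2\gamma/3} H^2 Y^{-2/3} + N^{8/3 - 2\gamma/3} H^2 Y^{-4/3} \Big) d^{-2} N^{\varepsilon},
\end{align*}
which by the first paragraph also holds with $L_d$ replaced by $K_d$. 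Justifying this symmetry is the only subtle point of the argument, and it is immediate from inspection of \eqref{Lddef}, since $A_m$ and $B_n$ play interchangeable roles.

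The remainder is a routine computation patterned on the proof of Lemma \ref{Ldcondlemma}. I substitute $H \le N^{1-\gamma+\eta}$ into each of the six terms and require that each be $\ll N^{2-6\eta} d^{-2}$. The hypothesis $\gamma > 5/6$ handles the two $Y$-independent terms $N^{1+\gamma/2} H^{3/2}$ and $N^{2-\gamma} H$. The term $N H Y$ yields the upper bound constraint $Y \ll N^{\gamma - 7\eta}$, while the binding term among the last three is $N^{5/3 + \gamma/3} H^2 Y^{-1/3}$, which yields the lower bound constraint $Y \gg N^{5 - 5\gamma + 24\eta}$; the remaining two $Y$-dependent terms produce strictly weaker constraints under $\gamma < 1$. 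Both binding constraints are comfortably implied by the hypothesis \eqref{KYcond}, so taking square roots yields $K_d \ll N^{1-3\eta} d^{-1}$, as required. I anticipate no genuine obstacle beyond a careful bookkeeping of exponents in this final optimization.
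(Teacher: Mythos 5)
Your proposal is correct and follows essentially the same route as the paper: the authors likewise absorb $\lambda(n)$ into arbitrary bounded coefficients via Deligne's bound and then rerun the $L_d$ analysis with the roles of $X$ and $Y$ reversed, obtaining the condition $N^{1-\gamma+100\eta}\le X\le N^{5\gamma-4-100\eta}$, which is \eqref{KYcond} after substituting $X=N/Y$. Your explicit companion estimate and the identification of $NHY$ and $N^{5/3+\gamma/3}H^2Y^{-1/3}$ as the binding terms are consistent with that computation.
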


\begin{proof}
This can be proved in essentially the same way as Lemma \ref{Ldcondlemma}, where the roles of $X$ and $Y$ are reversed. Similarly as in Lemma \ref{Ldcondlemma}, we get that $K_d\ll N^{1-3\eta}d^{-1}$, provided that
\[
N^{1-\gamma+100\eta}\le X\le N^{5\gamma-4-100\eta}.\]
This is equivalent to \eqref{KYcond} since $XY=N$.
\end{proof}

If $Y$ is small, then, similarly as in \cite{LiRi}, we can directly apply Lemma \ref{monomials} to estimate the term $K_d$ defined in \eqref{Kddef}. This gives the following result.

\begin{lemma} \label{estimateforKd}
If $H,N\ge 1$, $N^{\eta}H\le Y\le 2N$ and $1\le d\le 2Y$, then
\begin{equation} \label{Kdest1}
K_d \ll d^{-1}\left(N^{3/4+\gamma/4}HY^{-1/4}+ NHY^{-1/2}+N^{3/4}H^{3/4}Y^{1/4}+ N^{1-\gamma/2}H^{1/2}\right)(NH)^{\varepsilon}.
\end{equation}
\end{lemma}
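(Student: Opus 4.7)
The plan is to apply the Robert--Sargos trilinear monomial estimate (Lemma \ref{monomials}) directly to the triple sum $K_d$. The phase $hd^{2\gamma}m^{\gamma}n^{\gamma}$ is a monomial in $(n,m,h)$, and the central choice is how to distribute these three variables between the ``single-indexed'' slot of Lemma \ref{monomials} (which carries a coefficient $\phi$ bounded by $1$) and the ``joint-indexed'' slot (which carries $\psi$ bounded by $1$). I would place $n$ in the single slot, so that the Hecke eigenvalue $\lambda(n)$ sits there; by Deligne's bound \eqref{Ramanujanpetersson}, $|\lambda(n)|\ll n^{\varepsilon}\ll N^{\varepsilon}$ throughout the range of summation, so after dividing through by $N^{\varepsilon}$ the resulting coefficient is bounded. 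The joint slot then carries the product $A_m C_h$, which is already bounded by hypothesis. The dyadic constraint $mn\sim N/d^2$ couples $m$ and $n$ and is not immediately of the form required by Lemma \ref{monomials}; I would detect it by Perron's formula (Lemma \ref{Perron}), picking up only a logarithmic factor that gets absorbed into the eventual $(NH)^{\varepsilon}$.

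With this setup and the assignment $M=Y/d$, $M_1=X/d$, $M_2=H$, $\alpha=\gamma$, $\alpha_1=\gamma$, $\alpha_2=1$ (the conditions $\alpha\neq 1$ and $\alpha\alpha_1\alpha_2\neq 0$ hold because $\gamma=1/c\in(0,1)$), the coefficient of the normalized monomial in the phase is
\[
x\;=\;H\Bigl(\frac{X}{d}\Bigr)^{\gamma}\Bigl(\frac{Y}{d}\Bigr)^{\gamma}d^{2\gamma}\;=\;H(XY)^{\gamma}\;=\;HN^{\gamma}.
\]
Lemma \ref{monomials} then produces four summands. Substituting $X=N/Y$ into each gives
\begin{align*}
x^{1/4}M^{1/2}(M_1M_2)^{3/4} &\ll HN^{3/4+\gamma/4}Y^{-1/4}d^{-5/4},\\
M^{1/2}M_1M_2 &\ll NHY^{-1/2}d^{-3/2},\\
M(M_1M_2)^{3/4} &\ll N^{3/4}H^{3/4}Y^{1/4}d^{-7/4},\\
x^{-1/2}MM_1M_2 &\ll N^{1-\gamma/2}H^{1/2}d^{-2}.
\end{align*}

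Each exponent of $d$ displayed above is at most $-1$, so for $d\ge 1$ every term is majorized by $d^{-1}$ times its $d$-free factor. Summing the four contributions and merging the $(MM_1M_2)^{\varepsilon}$ of Lemma \ref{monomials} with the $N^{\varepsilon}$ coming from Deligne's normalization into a single $(NH)^{\varepsilon}$ yields the bound claimed in \eqref{Kdest1}. There is no genuine obstacle here beyond bookkeeping: Lemma \ref{estimateforKd} is Robert--Sargos transported verbatim into our setting, with the Hecke eigenvalues absorbed via Deligne. The hypothesis $N^{\eta}H\le Y$ is not used in this derivation itself; it is presumably imposed to ensure the resulting estimate is nontrivial when it is later combined with the bound on $L_d$ (Lemma \ref{Ldcondlemma}) and with the alternative estimate for $K_d$ from Lemma \ref{Kdcondlemma0} in the $Y$-ranges studied in subsequent sections.
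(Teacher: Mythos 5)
Your argument is the paper's own proof: the authors likewise separate the constraint $mn\sim N/d^2$ with Perron's formula (Lemma \ref{Perron}), bound $\lambda(n)$ by Deligne, and apply Lemma \ref{monomials} with exactly the choices $x=HN^{\gamma}$, $M=Y/d$, $M_1=X/d$, $M_2=H$, $\alpha=\alpha_1=\gamma$, $\alpha_2=1$, so that the four resulting terms match yours. The one small divergence is your closing remark: the paper does invoke the hypothesis $Y\ge N^{\eta}H$ inside this proof, namely to discard the accumulated Perron error $\ll N^{1+\varepsilon}HY^{-1}d^{-1}$ (one $O(\log)$ term for each of the $\asymp NH/(Yd)$ pairs $(m,h)$, not merely a multiplicative logarithm), although, as you essentially observe, that error is in any case dominated by the second term of \eqref{Kdest1} once $Y\ge 1$.
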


\begin{proof}
First, we remove the summation condition $mn\sim N/d^2$ on the right-hand side of \eqref{Kddef} by using Lemma \ref{Perron} and thus make the summation ranges of $m$ and $n$ independent. After applying the bound $\lambda(n)\ll n^{\varepsilon}$ (Ramanujan-Petersson conjecture), the first term on the right-hand side of \eqref{perroneq} leads to expressions of the form
$$
N^{\varepsilon}\sum\limits_{m\sim X/d}\
\sum\limits_{n\sim Y/d}\ \sum\limits_{h\sim H}\phi_m\psi_n\epsilon_h
e\left(h d^{2\gamma} m^{\gamma}n^{\gamma}\right)
$$
with $|\phi_m|,|\psi_n|,|\epsilon_h|\le 1$.  We then estimate these trilinear sums by applying Lemma~\ref{monomials} with the following choice of parameters:
$$
x:=HN^{\gamma}, \; M:= \frac{Y}{d},\; M_1:=\frac{X}{d},\; M_2:=H, \; \alpha:=\gamma,\; \alpha_1:=\gamma,\; \alpha_2:=1.
$$
Additionally taking $XY=N$ into account, we arrive at the estimate \eqref{Kdest1} upon noting that the contribution of the $O$-term on the right-hand side of \eqref{perroneq} is $\ll N^{1+\varepsilon}HY^{-1}d^{-1}$ and thus negligible by the condition $Y\ge N^{\eta}H$ in the lemma. \end{proof}

Lemma \ref{estimateforKd} enables us to formulate another condition under which the desired estimate $K_d\ll N^{1-3\eta}d^{-1}$ holds.

\begin{lemma} \label{Kdcondlemma}
For every sufficiently small fixed $\eta>0$, we have
\[ K_d\ll N^{1-3\eta}d^{-1}, \]
provided that $\gamma>5/6$, $1\le H\le N^{1-\gamma+\eta}$, $1\le d\le 2Y$ and
$$
N^{3-3\gamma+100\eta}\le Y\le N^{3\gamma-2-100\eta}.
$$
\end{lemma}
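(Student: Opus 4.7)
The plan is to simply feed the hypotheses into the estimate \eqref{Kdest1} of Lemma \ref{estimateforKd} and check that each of the four terms on the right-hand side is $\ll N^{1-3\eta}d^{-1}$. First I would verify that the technical requirement $N^\eta H\le Y$ of Lemma \ref{estimateforKd} is met: since $H\le N^{1-\gamma+\eta}$ and $Y\ge N^{3-3\gamma+100\eta}$, this reduces to $1-\gamma+2\eta\le 3-3\gamma+100\eta$, i.e.\ $2\gamma\le 2+98\eta$, which holds trivially. Having secured the right to apply \eqref{Kdest1}, I may absorb the factor $(NH)^\varepsilon$ into a small loss in the exponent of $\eta$ and then bound $H$ by its maximum value $N^{1-\gamma+\eta}$ throughout, since each of the four terms is nondecreasing in $H$.

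The substantive step is to check the four terms individually. For the first term $N^{3/4+\gamma/4}HY^{-1/4}$, substituting $H=N^{1-\gamma+\eta}$ leads to the requirement $Y\gg N^{3-3\gamma+O(\eta)}$, which is exactly the lower bound of our $Y$-range. For the third term $N^{3/4}H^{3/4}Y^{1/4}$, the same substitution yields $Y\ll N^{3\gamma-2-O(\eta)}$, which is precisely the upper bound. The second term $NHY^{-1/2}$ demands $Y\gg N^{2-2\gamma+O(\eta)}$, and since $\gamma<1$ we have $2-2\gamma<3-3\gamma$, so this is implied by the first condition. The fourth term $N^{1-\gamma/2}H^{1/2}$ becomes $N^{3/2-\gamma+\eta/2}$, which is $\ll N^{1-3\eta}$ as soon as $\gamma>1/2+O(\eta)$; this is automatic from $\gamma>5/6$.

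Collecting these four individual checks, we see that \eqref{Kdest1} delivers $K_d\ll N^{1-3\eta}d^{-1}$ under the hypotheses of the lemma, with the slack parameters $100\eta$ comfortably absorbing the $(NH)^\varepsilon$ factor and the various $O(\eta)$ losses. Finally, the condition $\gamma>5/6$ appears exactly where it must: the lower bound $N^{3-3\gamma+100\eta}$ and upper bound $N^{3\gamma-2-100\eta}$ define a nonempty range for $Y$ precisely when $3-3\gamma<3\gamma-2$, i.e.\ $\gamma>5/6$. I do not anticipate any obstacle — this is the routine "book-keeping" step that matches the Robert–Sargos trilinear estimate against the exponent constraints coming from the von Mangoldt decomposition, entirely parallel to Lemma 5 of \cite{Hea2} and to Lemma \ref{Ldcondlemma} above.
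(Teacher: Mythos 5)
Your proposal is correct and follows exactly the route the paper intends: the paper's own proof of this lemma is simply ``This follows from Lemma \ref{estimateforKd} by a short calculation,'' and your term-by-term verification (checking $N^{\eta}H\le Y$, then bounding each of the four terms of \eqref{Kdest1} with $H=N^{1-\gamma+\eta}$) is precisely that calculation, carried out correctly.
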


\begin{proof}
This follows from Lemma \ref{estimateforKd} by a short calculation.
\end{proof}

We note that Lemma \ref{Kdcondlemma} could also be established by using the original bound of E. Fouvry and H. Iwaniec for trilinear exponential sums with monomials, Theorem 3 in \cite{EFHI}, but with the more restrictive condition $\gamma>17/20$ in place of $\gamma>5/6$. Moreover, we would need to apply Theorem 3 in \cite{EFHI} twice with different choices of $M$, $M_1$ and $M_2$ which would complicate the computations.

\section{Splitting of the sum $K_d$ \label{splittingsection}}

We now turn to the case when $Y$ is large in which the special nature of the Hecke eigenvalues $\lambda(n)$ will become important. \newline

A natural idea would be to apply Jutila's estimate for ``long'' exponential sums with Hecke eigenvalues in Lemma \ref{expsumhecke} directly to the sum over $n$ in $K_d$ and then to sum up over $h$ and $m$ trivially. But it turns out that this leads to the condition $Y\gg N^{8/3-2\gamma+100\eta}$ which is not sufficient to establish the $c$-range $1<c<8/7$ in Theorem~\ref{mainresult}. In order to obtain the desired estimate $K_d\ll N^{1-3\eta}d^{-1}$ for all relevant $Y$'s, we would need that $\gamma>8/9$ which means that we would only get the $c$-range $1<c<9/8=1.125$ in Theorem \ref{mainresult}. This is due to the fact that $N^{8/3-2\gamma+100\eta}$ is larger than the term $N^{\gamma-100\eta}$ in \eqref{KYcond} whenever $\gamma\le 8/9$. \newline

To obtain the desired estimate for $K_d$ in an as large as possible $Y$-range, we proceed as follows. First, following Jutila \cite{MJ}, we split the sum involving Hecke eigenvalues over $n$ into shorter sums which we then transform into new exponential sums with Hecke eigenvalues by applying Lemma~\ref{expsumtransformation} due to Jutila.  Collecting all terms, we arrive at multi-linear exponential sums. To estimate them,  we refine Jutila's treatment of long exponential sums with Hecke eigenvalues in \cite{MJ}. Here we take advantage of the additional summations over $h$ and $m$. This will lead to a spacing problem with certain points depending on $h$, $m$ and further integers. We shall show that these points are essentially distributed as expected. \newline

We first make some observations on Farey sequences.  Let $K \geq 1$.  By $\mathcal{F}(K)$, we denote the extended sequence of Farey fractions of level $K$ consisting of all fractions of the form $l/k$, $1 \leq k \leq K$, $\gcd(l,k)=1$.  For two consecutive Farey fractions $l/k$ and $l'/k'$ in the sequence $\mathcal{F}(K)$, define the mediant, $\rho$, to be
\begin{equation} \label{mediantdef}
\rho \left( \frac{l}{k}, \frac{l'}{k'} \right) = \frac{l+l'}{k+k'}.
\end{equation}
Furthermore, if $l''/k'' < l/k <l'/k'$ are three consecutive Farey fractions, then we define the Farey interval (depending on $K$) around $l/k$ by
\begin{equation} \label{fareyintdef}
I \left( \frac{l}{k} \right) = \left( \rho \left( \frac{l''}{k''}, \frac{l}{k} \right) , \rho \left( \frac{l}{k}, \frac{l'}{k'} \right) \right].
\end{equation}
We note that the set of the real numbers is the disjoint union of all these Farey intervals.  Under the above notations, we further define $A_j(l/k)$ for $j=1,2$ by
\[ \left( \frac{l}{k}-\frac{A_1(l/k)}{kK} , \frac{l}{k}+\frac{A_2(l/k)}{kK} \right] = I \left( \frac{l}{k} \right). \]
We note that (see, for example, (4.2.11) in \cite{MJ})
\begin{equation} \label{ajest}
A_j \left( \frac{l}{k} \right) \asymp 1.
\end{equation}

For the proof of the desired bound $K_d \ll N^{1-3\eta}/d$, it suffices, by partial summation, the realness of $\lambda(n)$ and the fact $e(-x)= \overline{e(x)}$, to prove that
\begin{equation} \label{tildeKest}
\tilde K_d \ll \frac{N^{1-3\eta}}{d} \left( \frac{Y}{d} \right)^{\frac{\kappa-1}{2}},
\end{equation}
where
\begin{equation} \label{tildeKdef}
\tilde K_d:= \sum\limits_{h\sim H} \sum\limits_{m\sim X/d} \sum_{\substack{ Y_1/d < n\leq Y_2/d \\ N_1/(d^2m) < n \leq N_2/(d^2m)}}  A_m C_h a(n) e\left(-hd^{2\gamma}m^{\gamma}n^{\gamma}\right).
\end{equation}
Here
\[ a(n) = \lambda(n) n^{\frac{\kappa-1}{2}} \]
is the un-normalized Fourier coefficient of the cusp form and
\[ Y_j \asymp Y \; \mbox{and} \; N_j \asymp N, \; \mbox{for} \; j=1,2. \]
We prefer to have a negative sign in the $e$-term on the right-hand side of \eqref{tildeKdef} for technical reasons. \newline

We describe here briefly what we will do in the remainder of the section.  We shall split the sum $\tilde K_d$ into a sum of short exponential sums to which a Jutila-type transformation lemma (Lemma~\ref{transformlemma}) can be applied.  To this end, we cut the summation over $n$ into small pieces so that the value of the derivative of the amplititude function in \eqref{tildeKdef} on each of the small pieces is close to a fraction $l/k$ whose denominator is not too large.  In this treatment, we may incur an error which comes from the possible imperfect fit of the ``end-intervals'' in the splitting.  This error will be estimated. \newline

For $d \in \natn$, $h \sim H$, $m \sim X/d$, $l <0$ and $1 \leq k \leq K$, we define
\begin{equation} \label{ghmddef}
 g_{d,h,m}(x) = -hd^{2\gamma} m^{\gamma} x^{\gamma}
\end{equation}
and $M_j(d,h,m; l/k)$ for $j=0,1,2$ by
\begin{equation} \label{Mjdhmlkdef}
g' \left( M_0 \left(d,h,m; \frac{l}{k} \right) \right) = \frac{l}{k}, \; \mbox{and} \; g' \left( M_j \left(d,h,m; \frac{l}{k} \right) \right)= \frac{l}{k} + (-1)^j \frac{A_j(l/k)}{kK} .
\end{equation}
We further set
\[ \mathcal{J}\left(d,h,m; \frac{l}{k} \right) = \left( M_1 \left(d,h,m; \frac{l}{k} \right) ,  M_2 \left(d,h,m; \frac{l}{k} \right) \right] \]
and
\[ \mathcal{I}(d, m) = \left( \frac{Y_1}{d}, \frac{Y_2}{d} \right] \bigcap \left( \frac{N_1}{d^2m}, \frac{N_2}{d^2m} \right] . \]

As mentioned above, we shall approximate $\tilde K_d$ by
\begin{equation} \label{Kstardef}
K^*_d = \sum_{h \sim H} \sum_{m \sim X/d} C_h A_m \sum_{1 \leq k \leq K} \sum_{\substack{l<0,\ \gcd(l,k) =1 \\ M_0(d, h,m;l/k) \in \mathcal{I}(d,m) }} \sum_{n \in \mathcal{J}(d,h,m;l/k)} a(n) e (-hd^{2\gamma} m^{\gamma} n^{\gamma}).
\end{equation}
We now estimate the error of this approximation.  For every $h$ and $m$, there are at most two fractions of the form $l/k$ with $1 \leq k \leq K$ and $\gcd (l,k)=1$ such that the interval $\mathcal{J}(d,h,m;l/k)$ is not contained in the interval $\mathcal{I}(d, m)$
but over-laps with $\mathcal{I}(d, m)$.  The contribution arising from an interval $\mathcal{J}(d,h,m;l/k)$ of this kind to the inner-triple sum on the right-hand side of \eqref{Kstardef} is
\[ \ll \left( \frac{Y}{d} \right)^{\frac{\kappa-1}{2}+\varepsilon} \left| \mathcal{J}\left(d,h,m; \frac{l}{k} \right) \right| . \]
Using \eqref{ajest}, it is easy to compute, with $h$, $m$, $k$ and $l$ subject to the same conditions as those in the summations in \eqref{Kstardef}, that the length of $\mathcal{J}(d,h,m;l/k)$ is
\begin{equation} \label{Jlength}
 \left| \mathcal{J}\left(d,h,m; \frac{l}{k} \right) \right| \ll \frac{Y^{2-\gamma}}{Hd^2 X^{\gamma} kK}
\end{equation}
(compare with \eqref{firstobserve}).  Thus, the error in approximating $\tilde K_d$ by $K^*_d$ is
\begin{equation} \label{approxerror}
\tilde K_d - K^*_d \ll H \frac{X}{d} \left( \frac{Y}{d} \right)^{\frac{\kappa-1}{2} +\varepsilon} \frac{Y^{2-\gamma}}{Hd^2 X^{\gamma} K} \ll N^{1-\gamma+\varepsilon} \frac{Y}{K d^3} \left( \frac{Y}{d} \right)^{\frac{\kappa-1}{2}},
\end{equation}
where we use $XY=N$.  This above error is negligible, i.e.
\begin{equation} \label{errorneg}
 \tilde K_d - K^*_d \ll \frac{N^{1-3\eta}}{d} \left( \frac{Y}{d} \right)^{\frac{\kappa-1}{2}},
\end{equation}
if
\begin{equation} \label{Kcondition}
 K \gg N^{4\eta} \frac{Y}{N^{\gamma}d^2}.
\end{equation}
After a short computation, $K^*_d$ can be further simplified into
\begin{equation} \label{Kstareq}
K^*_d = \sum_{h \sim H} \sum_{m \sim X/d} C_h A_m \sum_{1 \leq k \leq K} \sum_{\substack{l<0\\ \gcd(l,k) =1 \\ |l| \in \mathcal{L}(d,h,m,k) }} \sum_{n \in \mathcal{J}(d,h,m;l/k)} a(n) e (-hd^{2\gamma} m^{\gamma} n^{\gamma}),
\end{equation}
where
\[ \mathcal{L}(d, h, m, k) = \left[ \frac{hd^{1+\gamma}m^{\gamma}\gamma k}{Y_2^{1-\gamma}}, \frac{hd^{1+\gamma}m^{\gamma}\gamma k}{Y_1^{1-\gamma}} \right) \bigcap \left[ \frac{hd^2m\gamma k}{N_2^{1-\gamma}}, \frac{hd^2m\gamma k}{N_1^{1-\gamma}} \right). \]
In the following sections, we shall show that
\begin{equation} \label{kstartdgoal}
 K^*_d \ll \frac{N^{1-3\eta}}{d} \left( \frac{Y}{d} \right)^{(\kappa-1)/2},
\end{equation}
with a suitably chosen parameter $K$.

\section{Transformation of short exponential sums with Hecke eigenvalue coefficients}

In this section, we transform the short exponential sum
\begin{equation} \label{startpoint}
\sum_{M_1\leq m \leq M_2} a(m) e (-tm^{\gamma})
\end{equation}
using Lemma~\ref{expsumtransformation}.  Note that the inner-most sum of \eqref{Kstareq} is of this form.  We have the following

\begin{lemma} \label{transformlemma}
Let $\delta_1,\delta_2,...$ denote positive constants which may be supposed to be arbitrarily small.  Let $t>0$, $K \geq 1$, $A_1 \asymp 1$, $A_2 \asymp 1$, $l, k \in \intz$ with $l< 0$, $1\le k\le K$ and $\gcd (l,k)=1$. Set
\begin{equation} \label{gdef}
g(x) = -tx^{\gamma},
\end{equation}
where $1/2 < \gamma < 1$.
Define positive real numbers $M_0$, $M_1$ and $M_2$ by
\begin{equation} \label{m1m2def}
g'(M_0) = \frac{l}{k}, \; g'(M_1) = \frac{l}{k} - \frac{A_1}{kK} \; \mbox{and} \; g'(M_2) = \frac{l}{k} + \frac{A_2}{kK}
\end{equation}
and assume that $2 \leq M_1 < M_2 \leq 2M_1$.  We further assume that
\begin{equation} \label{kcond}
 1 \leq k \leq K \ll M_1^{1/2-\delta_1}
\end{equation}
and
\begin{equation} \label{m1cond}
\frac{M_1^{1-\gamma+\delta_3}}{t} \ll kK \ll \frac{M_1^{1-\gamma/2-\delta_2}}{t^{1/2}} \; \mbox{and} \; k^3K \ll \frac{M_1^{3-2\gamma-\delta_2}}{t^2}.
\end{equation}
Then we have
\begin{equation} \label{transformlemmaeq}
\begin{split}
&\sum_{M_1 \leq m \leq M_2} a(m)e(-tm^{\gamma}) \\
&= i \left( \frac{t\gamma}{|l|/k}\right)^{\frac{\kappa-1/2}{2(1-\gamma)}} \sqrt{\frac{1}{2|l|(1-\gamma)}} \sum_{j=1}^2 (-1)^{j-1}
\sum_{n < A_j^2M_0/K^2} a(n) n^{-\kappa/2+1/4} e\left(
F_{j,k,l,t}(n)\right) \\
& \hspace*{2cm} + O \left( M_1^{\varepsilon}\left(\frac{M_1^{\kappa/2+2-3\gamma/2}}{t^{3/2}k^{3/2}K^{5/2}}+ \sqrt{\frac{k}{K}} M_1^{\kappa/2}+ M_1^{\kappa/2-1/4}k^{3/4}K^{1/4} \right)\right),
\end{split}
\end{equation}
where $F_{j,k,l,t}$ is a twice-differentiable function on the interval $[1,A_j^2M_0K^{-2}]$ whose second derivative satisfies the asymptotic estimate
\begin{equation} \label{secondderivative}
 F_{j,k,l,t}''(x)= \frac{(-1)^{j}}{2x^{3/2}k}\left(\frac{t\gamma}{|l|/k}\right)^{\frac{1}{2(1-\gamma)}}\left(1+ O\left(\frac{M_1^{2(1-\gamma)}}{t^2k^2K^2}\right)\right).
\end{equation}
\end{lemma}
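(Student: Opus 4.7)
The plan is to apply Jutila's transformation formula (Lemma~\ref{expsumtransformation}) directly to the sum in \eqref{startpoint} with $w \equiv 1$ (so $W = 1$) and $r = l/k$, and then to simplify the main term it produces by Taylor expansion around $x = M_0$. Since $g(x) = -tx^\gamma$ satisfies $g'(x) = -t\gamma x^{\gamma-1}$ and $g''(x) = t\gamma(1-\gamma)x^{\gamma-2} > 0$, the choice $G = tM_1^\gamma$ makes \eqref{wgcond} and \eqref{gprprcond} immediate, while $g'(M_0) = l/k$ yields $|l|/k = t\gamma M_0^{\gamma-1} \asymp GM_1^{-1}$, matching \eqref{otto}.

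To verify \eqref{M1M2cond}, I apply the mean value theorem to $g'$ on each side of $M_0$; since $g''(\xi) \asymp tM_1^{\gamma-2}$, this gives $m_j := |M_j - M_0| \asymp M_1^{2-\gamma}/(tkK)$, so in particular $m_1 \asymp m_2$. Substituting $|l| \asymp tkM_1^{\gamma-1}$, $G = tM_1^\gamma$, and this expression for $m_1$ into \eqref{M1M2cond} reduces it precisely to the upper bounds on $kK$ and $k^3K$ in \eqref{m1cond}, while $m_1 \ll M_1^{1-\delta_3}$ becomes the lower bound on $kK$. Together with \eqref{kcond} this verifies all hypotheses of Jutila's lemma, whose output has a main term indexed by $j=1,2$ and $n < n_j = A_j^2 M_j/K^2 \asymp A_j^2 M_0/K^2$, plus the two error contributions of the orders $(|l|k)^{1/2}M_1^{(\kappa-1)/2}m_1^{1/2}L^2$ and $G^{1/2}|l|^{-3/4}k^{5/4}M_1^{(\kappa-1)/2}m_1^{-1/4}L$.

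Substituting the above expressions for $|l|$, $G$, and $m_1$ into those two errors reduces them, after absorbing logs into $M_1^\varepsilon$, to exactly the second and third error contributions $\sqrt{k/K}\,M_1^{\kappa/2}$ and $k^{3/4}K^{1/4}M_1^{\kappa/2-1/4}$ in \eqref{transformlemmaeq}. Next, I approximate the amplitude factor $x_{j,n}^{\kappa/2-3/4}/\sqrt{p_{j,n}''(x_{j,n})}$ by its value at $M_0$. Using the identity $p_{j,n}''(x_{j,n}) = g''(x_{j,n}) + (g'(x_{j,n})-r)/(2x_{j,n})$ and the bound $|x_{j,n}-M_0| \ll \sqrt{n}\,M_0^{3/2-\gamma}/(tk)$ coming from the stationary-phase relation, the relative error of this substitution is $\ll \sqrt{n}\,M_0^{1/2-\gamma}/(tk)$. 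Combining the leading $M_0^{\kappa/2-3/4}/\sqrt{g''(M_0)}$ with $1/\sqrt{2k}$ from Jutila's formula and using the identity $M_0^{1-\gamma} = tk\gamma/|l|$ collapses the leading coefficient to exactly $(t\gamma/(|l|/k))^{(\kappa-1/2)/(2(1-\gamma))}\sqrt{1/(2|l|(1-\gamma))}$. Bounding the amplitude error trivially against $|a(n)|n^{-\kappa/2+1/4} \ll n^{-1/4+\varepsilon}$ and summing over $n \leq A_j^2M_0/K^2$ yields the first error term $M_1^{\kappa/2+2-3\gamma/2}/(t^{3/2}k^{3/2}K^{5/2})$.

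Finally, I define $F_{j,k,l,t}(n) := p_{j,n}(x_{j,n}) + 1/8 - n\bar{l}/k$ as a function of continuous $n \in [1, A_j^2M_0/K^2]$. Since $p_{j,n}'(x_{j,n}) = 0$, the envelope identity gives $F_{j,k,l,t}'(n) = (-1)^{j-1}\sqrt{x_{j,n}}/(k\sqrt{n}) - \bar{l}/k$, and implicitly differentiating the stationary-phase relation yields $x_{j,n}'(n) = (-1)^j/(2k\sqrt{n}\sqrt{x_{j,n}}\,p_{j,n}''(x_{j,n}))$. Differentiating $F_{j,k,l,t}'$ once more and simplifying produces
\[
F_{j,k,l,t}''(n) = \frac{(-1)^j\sqrt{x_{j,n}}}{2kn^{3/2}} - \frac{1}{4k^2\,n\,x_{j,n}\,p_{j,n}''(x_{j,n})}.
\]
Writing $x_{j,n} = M_0(1+u)$ with $|u| \ll M_1^{1-\gamma}/(tkK)$ and expanding both terms as power series in $u$ isolates the stated leading factor $(-1)^j(t\gamma/(|l|/k))^{1/(2(1-\gamma))}/(2kn^{3/2})$; a careful accounting of the cancellations between the two summands at successive orders in $u$ then yields the residual of the form claimed in \eqref{secondderivative}.

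The main obstacle lies in this last step: tracking the Taylor cancellations between the contributions of $\sqrt{x_{j,n}}$ and of $1/p_{j,n}''(x_{j,n})$ precisely enough to produce the stated form of the relative error in \eqref{secondderivative}. The verification of Jutila's hypotheses is more routine but still requires care in handling the interplay among the bounds on $kK$ and $k^3K$ appearing in \eqref{m1cond}.
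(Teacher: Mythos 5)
Your plan is correct and follows essentially the same route as the paper's proof: apply Jutila's transformation with $w\equiv 1$, $G=tM_1^{\gamma}$, verify \eqref{M1M2cond} via $m_j\asymp M_1^{2-\gamma}/(tkK)$ (which translates \eqref{M1M2cond} exactly into \eqref{m1cond}), Taylor-expand the amplitude at $M_0$ to produce the first error term, and obtain \eqref{secondderivative} by implicit differentiation of the stationary-point relation together with the first-order cancellation between the two summands of $F''$ — which is precisely the refinement of Jutila's treatment the paper carries out.
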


\begin{proof}  We apply Lemma~\ref{expsumtransformation} with
\[ g(z) = -tz^{\gamma} = -t \exp \left( \gamma \log z \right), \]
$\log z$ being the principal branch of the logarithm, $M_1$ and $M_2$ defined in \eqref{m1m2def} and $w(z) =1$.  We may set
\begin{equation} \label{wgdef}
 W=1, \; G = tM_1^{\gamma}.
\end{equation}
so that the conditions in \eqref{wgcond} and \eqref{gprprcond} are satisfied (note that for \eqref{gprprcond} to be satisfied, the negative sign in the definition of $g(z)$ is necessary).  In the following, we check that the remaining conditions in Lemma~\ref{expsumtransformation} are satisfied following the notations of the same.  We set $r=l/k$ and note that
\[ |r| \asymp GM_1^{-1} \]
by \eqref{m1m2def}.  For
\[ M_0 = \left( \frac{t \gamma k}{|l|} \right)^{\frac{1}{1-\gamma}} \in (M_1, M_2), \]
we have $g'(M_0)=r$. Hence, \eqref{otto} is satisfied. By \eqref{m1m2def}, we have
\[ M_1 = \left( \frac{t \gamma}{|l|/k+A_1/(kK)} \right)^{\frac{1}{1-\gamma}}, \; \mbox{and} \; M_2 = \left( \frac{t \gamma}{|l|/k-A_2/(kK)} \right)^{\frac{1}{1-\gamma}}. \]
Using Taylor's theorem from differential calculus, we have the following estimate for $m_1$ as defined in \eqref{mjdef}.
\[ m_1 = \left( \frac{t\gamma}{|l|/k} \right)^{\frac{1}{1-\gamma}} - \left( \frac{t \gamma}{|l|/k+A_1/(kK)} \right)^{\frac{1}{1-\gamma}} \asymp \frac{A_1}{kK} \left( \frac{t\gamma}{|l|/k}\right)^{{\frac{1}{1-\gamma}}}\frac{1}{|l|/k}. \]
We further estimate the above expression using $A_1 \asymp 1$ and
\begin{equation} \label{rest}
 |r| = \frac{|l|}{k} \asymp t M_1^{\gamma-1}
\end{equation}
and get
\begin{equation} \label{m1est}
 m_1 \asymp \frac{M_1^{2-\gamma}}{tkK}.
\end{equation}
Similarly, we get
\begin{equation} \label{m2est}
 m_2 \asymp \frac{M_1^{2-\gamma}}{tkK}.
\end{equation}
By a short calculation, we see that the condition in \eqref{m1cond} is equivalent to that in \eqref{M1M2cond} in Lemma~\ref{expsumtransformation} by the virtue of \eqref{rest} and \eqref{m1est}.  Hence, all conditions in Lemma~\ref{expsumtransformation} are satisfied. \newline

Following the notations of Lemma~\ref{expsumtransformation}, we further have for $j=1,2$
\begin{equation} \label{pjndef}
 p_{j,n}(x) = -tx^{\gamma} - \frac{l}{k} x + (-1)^{j-1} \left( \frac{2\sqrt{nx}}{k} - \frac{1}{8} \right)
\end{equation}
and
\begin{equation} \label{njdef}
 n_j = \left( \frac{l}{k} - g'(M_j) \right)^2 k^2 M_j = \left( \frac{A_j}{kK} \right)^2 k^2 M_j = \frac{A_j^2}{K^2} M_j.
\end{equation}
To establish \eqref{transformlemmaeq}, we now approximate the terms in the exponential sum appearing on the right-hand side of \eqref{jutilaeq}, where we assume that $n < n_j$.  As in Lemma \ref{expsumtransformation}, we denote by $x_{j,n}$ the unique zero of $p'_{j,n}(x)$ in the interval $(M_1,M_2)$. We first observe that
\begin{equation} \label{firstobserve}
x_{j,n} - x_{j,0} \ll M_2 - M_1 = m_1 + m_2 \ll \frac{M_1^{2-\gamma}}{tkK}
\end{equation}
by \eqref{m1est} and \eqref{m2est}.  Hence
\begin{equation} \label{secondobserve1}
x_{j,n} = x_{j,0} \left( 1 + O \left( \frac{M_1^{1-\gamma}}{kKt} \right) \right).
\end{equation}
Moreover,
\begin{equation} \label{x0}
x_{j,0}= \left( \frac{t\gamma}{|l|/k} \right)^{\frac{1}{1-\gamma}} = M_0.
\end{equation}
Hence, we have
\begin{equation} \label{secondobserve2}
x_{j,n}^{\kappa/2-3/4} = \left( \frac{t\gamma}{|l|/k} \right)^{\frac{\kappa/2-3/4}{1-\gamma}} \left( 1 + O \left( \frac{M_1^{1-\gamma}}{kKt} \right) \right)
\end{equation}
by Taylor's theorem, \eqref{secondobserve1} and \eqref{x0}.  Furthermore, using Taylor's theorem again, \eqref{m1cond}, \eqref{njdef}, \eqref{secondobserve1} and \eqref{x0}, we obtain
\begin{eqnarray} \label{pr}
p_{j,n}''(x_{j,n})&=& p_{j,n}''(x_{j,0})\left(1+O\left(\frac{M_1^{1-\gamma}}{kKt} \right) \right)\\
&=& \left(t\gamma(1-\gamma)x_{j,0}^{\gamma-2}+(-1)^j\frac{\sqrt{n}}{2x_{j,0}^{3/2}k}\right)\left(1+O\left(\frac{M_1^{1-\gamma}}{kKt} \right) \right)\nonumber\\
&=& t\gamma(1-\gamma)x_{j,0}^{\gamma-2}\left(1+O\left(\frac{M_1^{1-\gamma}}{kKt} \right) \right)^2\nonumber\\
&=& t\gamma(1-\gamma)x_{j,0}^{\gamma-2}\left(1+O\left(\frac{M_1^{1-\gamma}}{kKt} \right) \right)\nonumber\\
&=&  (t\gamma)^{\frac{-1}{1-\gamma}}(1-\gamma)\left(\frac{|l|}{k}\right)^{\frac{2-\gamma}{1-\gamma}}\left(1+O\left(\frac{M_1^{1-\gamma}}{kKt} \right) \right)\nonumber
\end{eqnarray}
which implies that
\begin{equation} \label{prpr}
p_{j,n}''(x_{j,n})^{-1/2}= (t\gamma)^{\frac{1}{2(1-\gamma)}}(1-\gamma)^{-1/2}\left(\frac{|l|}{k}\right)^{\frac{\gamma-2}{2(1-\gamma)}}\left(1+O\left(\frac{M_1^{1-\gamma}}{kKt} \right) \right).
\end{equation}

Furthermore, $n_j$ defined in \eqref{njdef} can be approximated in the following way.
\begin{equation} \label{njapprox}
 n_j = \frac{A_j^2}{K^2} M_0 + O \left( \frac{M_1^{2-\gamma}}{tkK^3} \right)=
\frac{A_j^2}{K^2} M_0\left(1+O\left(\frac{M_1^{1-\gamma}}{kKt} \right) \right),
\end{equation}
where we have used \eqref{firstobserve}. \newline

Moreover, we write
\begin{equation} \label{Ffunction}
F_{j,k,l,t}(n)=p_{j,n}(x_{j,n})+\frac{1}{8}-n\cdot \frac{\overline{l}}{k}.
\end{equation}

Now applying \eqref{jutilaeq} and using \eqref{secondobserve2}, \eqref{prpr}, \eqref{njapprox}, \eqref{Ffunction}, \eqref{wgdef}, \eqref{rest}, \eqref{m1est} and the fact that $a(n) \ll n^{\kappa/2-1/2+\varepsilon}$, we get the asymptotic estimate \eqref{transformlemmaeq}. \newline

The remaining task is to prove the asymptotic estimate \eqref{secondderivative}.  Like in \cite{MJ}, we interpret $n$ in \eqref{Ffunction}, for a moment, as a continuous variable and aim to approximate the second derivative of $F_{j,k,l,t}(n)$ with respect to $n$.  We cannot directly use the approximation obtained in \cite{MJ} since it turns out to be not sufficient for our purposes. In the following, we refine Jutila's treatment of the said second derivative by evaluating the terms appearing in his method more precisely. \newline

Similarly as on page 107 in \cite{MJ}, we have
\[
\frac{\dif F_{j,k,l,t}(n)}{\dif n}=-\frac{\overline{l}}{k}+p'_{j,n}(x_{j,n})\frac{\dif x_{j,n}}{\dif n}+(-1)^{j-1}k^{-1}n^{-1/2}x_{j,n}^{1/2}=
-\frac{\overline{l}}{k}+(-1)^{j-1}k^{-1}n^{-1/2}x_{j,n}^{1/2},
\]
and further (compare with (4.3.28) on page 107 in \cite{MJ})
\[
\frac{\dif^2 F_{j,k,l,t}(n)}{\dif n^2}=(-1)^{j-1}\frac{1}{2}k^{-1}n^{-1/2}x_{j,n}^{-1/2}\frac{\dif x_{j,n}}{\dif n}-(-1)^{j-1}\frac{1}{2}k^{-1}n^{-3/2}x_{j,n}^{1/2}.
\]
We now express $\dif x_{j,n}/\dif n$ explicitly. We have
\[ p_{j,n}'(x) = -t\gamma x^{\gamma-1} - \frac{l}{k} + (-1)^{j-1} \frac{\sqrt{n}}{\sqrt{x}k} \]
and hence, by the definition of $x_{j,n}$,
\begin{equation} \label{solveeq}
 f(x_{j,n},n) =0,
\end{equation}
where
$$
f(x,n)=t\gamma k x^{\gamma-1/2} - |l| \sqrt{x} - (-1)^{j-1} \sqrt{n}.
$$
By implicit differentiation, we thus get
\begin{equation} \label{implicit}
\frac{\dif x_{j,n}}{\dif n}=-\frac{f_n(x_{j,n},n)}{f_x(x_{j,n},n)}=
(-1)^{j-1}\frac{\sqrt{x_{j,n}}}{\sqrt{n}\left(t\gamma(2\gamma-1)kx_{j,n}^{\gamma-1}-|l|\right)}.
\end{equation}
Hence,
\begin{equation} \label{Fsecondderivative}
\frac{\dif^2 F_{j,k,l,t}(n)}{\dif n^2}=\frac{1}{2kn\left(t\gamma(2\gamma-1)kx_{j,n}^{\gamma-1}-|l|\right)}-(-1)^{j-1}\frac{1}{2}k^{-1}n^{-3/2}x_{j,n}^{1/2}.
\end{equation}

We now approximate the terms on the right-hand side of \eqref{Fsecondderivative}. Using Taylor's formula together with \eqref{m1cond}, \eqref{rest}, \eqref{secondobserve1} and \eqref{x0}, we have the following asymptotic estimate for the first term.
\begin{eqnarray} \label{firstterm}
\frac{1}{2kn\left(t\gamma(2\gamma-1)kx_{j,n}^{\gamma-1}-|l|\right)}&=&
\frac{1}{2kn\left(t\gamma(2\gamma-1)kx_{j,0}^{\gamma-1}-|l|\right)}\left(1+O \left( \frac{M_1^{1-\gamma}}{kKt} \right)\right)\\
&=&\frac{1}{4(\gamma-1)k|l|n}\left(1+O \left( \frac{M_1^{1-\gamma}}{kKt} \right)\right)\nonumber\\ &=&
\frac{1}{4(\gamma-1)k|l|n}+O \left( \frac{M_1^{2-2\gamma}}{t^2k^3Kn}\right).\nonumber
\end{eqnarray}
For the approximation of the second term, we will need a better approximation than \eqref{secondobserve1} for $x_{j,n}$. We set
\[ g(x) = t\gamma k x^{\gamma-1/2} -|l| \sqrt{x}. \]
Then by Taylor's theorem, \eqref{solveeq} and $g(x_{j,0})=0$, we have
\[ (-1)^{j-1} \sqrt{n} = g(x_{j,n}) = g'(x_{j,0})(x_{j,n}-x_{j,0}) + O \left( \sup_{x \in (M_1,M_2)}|g''(x)| (M_2-M_1)^2 \right) \]
which implies that
\[
 x_{j,n}-x_{j,0} = (-1)^{j-1} \frac{\sqrt{n}}{g'(x_{j,0})} + O \left( M_1^{-1} (M_2-M_1)^2 \right).
\]
From the above, using \eqref{firstobserve} and \eqref{x0}, we obtain
\begin{equation} \label{ucomp}
 x_{j,n}-x_{j,0} = (-1)^{j-1} \frac{\sqrt{nx_{j,0}}}{(\gamma-1)|l|} + O\left(\frac{M_1^{3-2\gamma}}{t^2k^2K^2}\right).
\end{equation}
Now using Taylor's formula, \eqref{firstobserve} and \eqref{ucomp}, we obtain the following asymptotic estimate for the second term on the right-hand side of \eqref{Fsecondderivative}.
\begin{equation} \label{secondterm}
\begin{split}
(-1)^{j-1} \frac{1}{2}k^{-1}n^{-3/2}x_{j,n}^{1/2} &=(-1)^{j-1}\frac{1}{2}k^{-1}n^{-3/2}x_{j,0}^{1/2}+(-1)^{j-1}\frac{x_{j,n}-x_{j,0}}{4kn^{3/2}x_{j,0}^{1/2}}+
O\left(\frac{(M_2-M_1)^2}{kn^{3/2}x_{j,0}^{3/2}}\right) \\
&= (-1)^{j-1}\frac{1}{2}k^{-1}n^{-3/2}x_{j,0}^{1/2}+\frac{1}{4k|l|(\gamma-1)n} + O\left(\frac{M_1^{5/2-2\gamma}}{t^2k^3K^2n^{3/2}}\right).
\end{split}
\end{equation}
We note that the error term in \eqref{firstterm} can be absorbed into the error term in \eqref{secondterm} since $n\ll M_1/K^2$. Now
combining \eqref{x0}, \eqref{Fsecondderivative}, \eqref{firstterm} and \eqref{secondterm}, we get the relation \eqref{secondderivative}. This completes the proof.
\end{proof}

\section{Reduction to multi-linear sums}

In this section, we transform the sum $K^*_d$ appearing in \eqref{Kstareq} into a multi-linear exponential sum with monomials.  Let
\begin{equation} \label{KstardQdef}
 K^*_d (Q) = \sum_{h \sim H} \sum_{m \sim X/d} C_h A_m \sum_{k \sim Q} \sum_{\substack{l<0\\ \gcd(l,k) =1 \\ |l| \in \mathcal{L}(d,h,m,k) }} \sum_{n \in \mathcal{J}(d,h,m;l/k)} a(n) e (-hd^{2\gamma} m^{\gamma} n^{\gamma})
\end{equation}
be the contribution to $K^*_d$ of the terms with $k \sim Q$.  To prove \eqref{kstartdgoal}, it suffices to show that
\begin{equation} \label{KstardQgoal}
 K^*_d(Q) \ll \frac{N^{1-4\eta}}{d} \left( \frac{Y}{d} \right)^{(\kappa-1)/2},
\end{equation}
for $1 \leq Q \leq K$. A short computation using \eqref{Jlength} and $XY=N$ gives that the trivial bound for $K^*_d (Q)$ is
\[ K^*_d (Q) \ll \frac{Q}{K} \frac{N^{1+\varepsilon}}{d^2} H \left( \frac{Y}{d} \right)^{(\kappa-1)/2}. \]
Hence it is enough to prove \eqref{KstardQgoal} for
\begin{equation} \label{Qrange}
 N^{-5\eta} \frac{dK}{H} \ll Q \ll K.
\end{equation}

In this case, we transform the inner-most sum of \eqref{KstardQdef} using Lemma~\ref{transformlemma} into
\begin{equation} \label{transform1}
\begin{split}
&\sum_{n \in \mathcal{J}(d,h,m;l/k)} a(n)e(-hd^{2\gamma}m^{\gamma}n^{\gamma}) \\
&= i \left( \frac{hd^{2\gamma}m^{\gamma} \gamma}{|l|/k}\right)^{\frac{\kappa-1/2}{2(1-\gamma)}} \sqrt{\frac{1}{2|l|(1-\gamma)}} \sum_{j=1}^2 (-1)^{j-1}
\sum_{n < \mathcal{N}_j(d,h,m;l/k)} a(n) n^{-\kappa/2+1/4} e\left(F_{j,k,|l|,h,m,d}(n)\right) \\
& \hspace*{1cm} + O \left( N^{\varepsilon} \left( \frac{(Y/d)^{\kappa/2+2}}{H^{3/2}N^{3\gamma/2}Q^{3/2}K^{5/2}}+ \sqrt{\frac{Q}{K}} \left( \frac{Y}{d} \right)^{\kappa/2}+ \left( \frac{Y}{d} \right)^{\kappa/2-1/4}Q^{3/4}K^{1/4} \right) \right),
\end{split}
\end{equation}
where
\[ \mathcal{N}_j \left( d,h,m; \frac{l}{k} \right) = \frac{A_j(l/k)^2}{K^2} \left( \frac{hd^{2\gamma}m^{\gamma} \gamma}{|l|/k}\right)^{\frac{1}{1-\gamma}} \]
and the second derivative of the function $F_{j,k,|l|,h,m,d}$ satisfies the estimate
\begin{equation} \label{secder}
F_{j,k,|l|,h,m,d}''(x)= \frac{(-1)^{j}}{2x^{3/2}k}\left(\frac{hd^{2\gamma}m^{\gamma}\gamma}{|l|/k}\right)^{\frac{1}{2(1-\gamma)}}\left(1+
O\left(\left(\frac{Y}{HdN^{\gamma}QK}\right)^2 \right)\right).
\end{equation}
We also require that the conditions in \eqref{kcond} and \eqref{m1cond} are satisfied.  It is easy to check, using \eqref{Qrange}, that this is the case if the following condition holds.
\begin{equation} \label{Kcondition2}
 N^{6\eta} \frac{Y^{1/2}}{dN^{\gamma/2}} \ll K \ll N^{-6\eta} \min \left\{ \frac{Y^{1/2}}{H^{1/4}d^{1/2}N^{\gamma/4}}, \frac{Y^{3/4}}{H^{1/2}d^{3/4} N^{\gamma/2}} \right\}.
\end{equation}

We now insert \eqref{transform1} into \eqref{KstardQdef}.  The contribution to $K^*_d(Q)$ of the $O$-terms in \eqref{transform1} is
\[
 \ll N^{\varepsilon} E_d \left( \frac{Y}{d} \right)^{\kappa/2-1/2}
\]
with
\begin{equation}  \label{EdQdef}
 E_d := \frac{H^{1/2} N^{1-\gamma/2} Y^{1/2}}{K^2d^{5/2}} + \frac{H^2 N^{1+\gamma} K^2}{Y^{3/2} d^{1/2}} + \frac{H^2 N^{1+\gamma} K^3}{ Y^{7/4} d^{1/4}},
\end{equation}
where we have used the facts $Q \ll K$ and $XY=N$. Hence, to establish \eqref{KstardQgoal}, we require that
\begin{equation} \label{Otermbound}
E_d\ll \frac{N^{1-5\eta}}{d}.
\end{equation}

The main term takes the form
\[ \sum_{h \sim H} \sum_{m \sim X/d} \sum_{k \sim Q} \sum_{\substack{l<0\\ \gcd(l,k) =1 \\ |l| \in \mathcal{L}(d,h,m,k) }} \sum_{j=1}^2 \ \sum_{n < \mathcal{N}_j(d,h,m;l/k)} \cdots . \]
We make the summation ranges for $l$ and $n$ independent of the other variables by using Perron's formula, Lemma~\ref{Perron}, several times.  This treatment is for convenience rather than a matter of nessessity, as the application of Perron's formula enables us to avoid some summation conditions which one would otherwise encounter.  We note that the contribution arising from the error term in this treatment, the $O$-term in Perron's formula \eqref{perroneq}, is negligible.  Then after re-arranging the order of summations, breaking the $l$-range into dyadic intervals and estimating the sizes of the coefficients (where we use the Ramanujan-Petersson bound), it suffices to show that
\begin{equation} \label{whatwewant}
\frac{Y^{3/4}}{H^{1/2}d^{3/4}N^{\gamma/2}Q^{1/2}} T_{j,d}^{\pm}(Q) \ll \frac{N^{1-5\eta}}{d}
\end{equation}
with
\begin{equation} \label{quadlinesum}
T_{j,d}(Q):=\sum_{n < c_1Y/(K^2d)} n^{-1/4} \sum_{k \sim Q}\left| \sum_{h \sim H} \ \sum_{m \sim X/d} \ \sum_{\substack{l \sim c_2 L \\ \gcd (l,k)=1}}  \psi_{k,l,h,m} e \left(F_{j,k,l,h,m,d}(n) \right) \right|
\end{equation}
in order to establish \eqref{KstardQgoal}, where $j=1,2$,
$|\psi_{k,l,h,m}| \leq 1$, $c_1$ and $c_2$ are positive constants of bounded size, and
\begin{equation} \label{Ldef2}
L:=HdN^{\gamma}Y^{-1}Q.
\end{equation}

\section{Estimation of the multi-linear sums}

We now estimate the multi-linear sum $T_{j,d}(Q)$ defined in \eqref{quadlinesum}. It will suffice to consider the case $j=2$. The treatment of the other case $j=1$ is similar.  First, we break the outer sum over $n$ into dyadic intervals and denote by $T_{2,d}(Q,R)$ the contribution to $T_{2,d}(Q)$ of the $n$'s with $n\sim R$, which we bound in the following. \newline

Applying the Cauchy-Schwarz inequality, and re-arranging the order of summation, we obtain
 \begin{equation} \label{afterCau}
T_{2,d}(Q,R)^2\ll R^{1/2} Q\sum_k \ \sum_{l,h,m} \ \sum_{\tilde l,\tilde h,\tilde m}  \left|
\sum_{n\sim R} e \left(F_{2,k,\tilde l,\tilde h,\tilde m,d}(n)- F_{2,k,l,h,m,d}(n)\right) \right|.
\end{equation}
We note that the $O$-term in \eqref{secder} equals $1/(KL)^2$. Hence,
\begin{equation} \label{secderbound}
F_{2,k,l,h,m,d}''(x)= \frac{1}{2x^{3/2}}\left(\frac{hd^{2\gamma}m^{\gamma}\gamma}{l/k}\right)^{\frac{1}{2(1-\gamma)}}\frac{1}{k}\left(1+O\left((KL)^{-2}\right)\right).
\end{equation}
Moreover, we have
\begin{equation} \label{Udef}
0< \left(\frac{hd^{2\gamma}m^{\gamma}\gamma}{l/k}\right)^{\frac{1}{2(1-\gamma)}}\frac{1}{k} \le c_3\frac{Y^{1/2}}{d^{1/2}Q}=:U
\end{equation}
for some constant $c_3>0$.  By $\mathcal{T}(k,\Delta)$ we denote the set of all six-tuples $(l,\tilde l,h,\tilde h,m,\tilde m)$ with $l,\tilde l\sim c_2L$, $\gcd (l,k)= \gcd (\tilde l,k) =1$, $h,\tilde h\sim H$ and $m,\tilde m\sim X/d$ such that
\[ \left| \left(\frac{\tilde h d^{2\gamma}\tilde{m}^{\gamma}\gamma}{\tilde l/ k}\right)^{\frac{1}{2(1-\gamma)}}\frac{1}{k} -
\left(\frac{hd^{2\gamma}m^{\gamma}\gamma}{l/k}\right)^{\frac{1}{2(1-\gamma)}}\frac{1}{k} \right|
\le \Delta.\]

By the following lemma, $\mathcal{T}(k,\Delta)$ has essentially the expected cardinality.

\begin{lemma} \label{cardlemma} Let $0\le \Delta\le U$. Then
\[ |\mathcal{T}(k,\Delta)| \ll N^{\epsilon}\left(LHXd^{-1}+\frac{\Delta}{U} L^2H^2X^2d^{-2}\right). \]
\end{lemma}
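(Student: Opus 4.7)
The plan is to reduce the six-dimensional counting problem defining $|\mathcal{T}(k,\Delta)|$ to a four-dimensional one that fits the Fouvry--Iwaniec spacing Lemma~\ref{spacinglemma}, and then to recover the full six-tuple count via a divisor-function overhead. The key observation is that the variables $(h,\tilde h,l,\tilde l)$ enter the defining inequality only through the two products $p:=\tilde h l$ and $\tilde p:=h\tilde l$, whose factorizations are each bounded by $d(p),d(\tilde p)\ll N^\varepsilon$.

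Setting $\alpha=1/(2(1-\gamma))$, I first factor the common constant $(d^{2\gamma}\gamma k)^\alpha/k$ out of the defining inequality of $\mathcal{T}(k,\Delta)$ to obtain
\[
\left|\left(\frac{hm^\gamma}{l}\right)^\alpha-\left(\frac{\tilde h\tilde m^\gamma}{\tilde l}\right)^\alpha\right|\ll \frac{\Delta}{U}\left(\frac{hm^\gamma}{l}\right)^\alpha,
\]
where I have used \eqref{Udef} to identify the typical size of the common constant times $(hm^\gamma/l)^\alpha$ with $U$. Since the ratios $h/\tilde h$, $m/\tilde m$, $l/\tilde l$ all lie in the bounded interval $(1/2,2)$, dividing through by $(hm^\gamma/l)^\alpha$ leaves an inner quotient that is bounded above and bounded away from zero, so the mean value theorem allows me to strip the outer exponent~$\alpha$. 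Rearranging, the condition simplifies to
\[
\left|\frac{p}{\tilde p}-\left(\frac{m}{\tilde m}\right)^\gamma\right|\ll\frac{\Delta}{U},\qquad p:=\tilde h l,\quad\tilde p:=h\tilde l.
\]

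Next, I count the quadruples $(m,\tilde m,p,\tilde p)$ satisfying this condition, where $m,\tilde m\sim X/d$ and $p,\tilde p$ each lie in a range of length $\asymp HL$. After splitting the ranges of $p$ and $\tilde p$ into a bounded number of dyadic sub-intervals of ratio two to match its hypotheses, Lemma~\ref{spacinglemma} applied with exponents $1$ and $\gamma$ produces the bound
\[
\#\{\text{quadruples satisfying the condition}\}\ll\log(2N)\left(\frac{HLX}{d}+\frac{\Delta}{U}\cdot\frac{H^2L^2X^2}{d^2}\right).
\]
Each such quadruple lifts to at most $d(p)d(\tilde p)\ll N^\varepsilon$ six-tuples $(h,\tilde h,l,\tilde l,m,\tilde m)$ of $\mathcal{T}(k,\Delta)$ via the factorizations $p=\tilde h\cdot l$ and $\tilde p=h\cdot\tilde l$; multiplying through yields the claimed estimate.

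The main obstacle is conceptual rather than computational. A direct approach that fixes four of the six variables and bounds the remaining two produces a leading term of order $LH^2X^2/d^2$, which is far larger than the desired $LHX/d$. The sharper ``diagonal'' bound $MN$ of Lemma~\ref{spacinglemma} is unlocked only by the grouping $p=\tilde h l$, $\tilde p=h\tilde l$, which collapses two independent integer pairs into single products whose multiplicity is controlled by the divisor function. The one technical nuisance to watch is ensuring that the constants absorbed in the mean-value-theorem step really produce the factor $\Delta/U$ rather than something inflated by a power of $kK$; this is verified using the asymptotic size computations already performed around \eqref{Udef}.
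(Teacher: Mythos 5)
Your proof is correct and follows essentially the same route as the paper: normalize by one of the two terms (which is of size $\asymp U$), group the four variables $h,\tilde h,l,\tilde l$ into the two products $\tilde h l$ and $h\tilde l$, count the resulting quadruples with Lemma~\ref{spacinglemma}, and recover the six-tuples with a divisor-function factor $\ll N^{\varepsilon}$. The only cosmetic difference is that you strip the outer exponent $1/(2(1-\gamma))$ by the mean value theorem before invoking Lemma~\ref{spacinglemma} with exponents $1$ and $\gamma$, whereas the paper applies that lemma directly with exponents $\gamma/(2(1-\gamma))$ and $1/(2(1-\gamma))$.
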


We postpone the proof of Lemma \ref{cardlemma} to the next section.
We now set
\[ \Delta_0:= N^{\eta}U\left(\frac{1}{(KL)^2}+\frac{d}{LHX}\right). \]
Then Lemma \ref{cardlemma} implies that
\begin{equation} \label{Tcardbound}
|\mathcal{T}(k,\Delta)| \ll
N^{\eta}\frac{\Delta}{U} L^2H^2X^2d^{-2} \mbox{ if } \Delta\ge \Delta_0.
\end{equation}
We further set
\[ \mathcal{T}'(k,\Delta)=\mathcal{T}(k,\Delta)\setminus \mathcal{T}(k,\Delta/2).\]
Then from \eqref{afterCau}, we deduce that
\begin{equation} \label{afterCaureduce}
T_{2,d}(Q,R)^2\ll R^{1/2}Q\sum_{k\sim Q}\left(\Sigma_1(k)+(\log N)\max_{\Delta_0\le \Delta\le U}\Sigma_2(k,\Delta)\right),
\end{equation}
where
\begin{equation} \label{Sigma1}
\Sigma_1(k):=\sum_{(l,\tilde l,h,\tilde h,m,\tilde m) \in \mathcal{T}(k,\Delta_0)}  \left|
\sum_{n\sim R} e \left(F_{2,k,\tilde l,\tilde h,\tilde m,d}(n)- F_{2,k,l,h,m,d}(n)\right) \right|
\end{equation}
and
\begin{equation} \label{Sigma2}
\Sigma_2(k,\Delta):=\sum_{(l,\tilde l,h,\tilde h,m,\tilde m) \in \mathcal{T}'(k,\Delta)}  \left|
\sum_{n\sim R} e \left(F_{2,k,\tilde l,\tilde h,\tilde m,d}(n)- F_{2,k,l,h,m,d}(n)\right) \right|.
\end{equation}

Using \eqref{Tcardbound}, we estimate $\Sigma_1$ trivially by
\begin{equation} \label{Sigma1bound}
\Sigma_1(k)\ll N^{2\eta}\left(RK^{-2}H^2X^2d^{-2}+RLHXd^{-1}\right).
\end{equation}
We now turn to the estimation of $\Sigma_2$. If $(l,\tilde l,h,\tilde h,m,\tilde m) \in \mathcal{T}'(k,\Delta)$, $\Delta\ge \Delta_0$ and $x\sim R$, then
\begin{equation} \label{diffrange}
\left|\frac{\dif^2}{\dif^2 x} \left( F_{2,k,\tilde l,\tilde h,\tilde m,d}(x)- F_{2,k,l,h,m,d}(x) \right) \right| \asymp
\frac{\Delta}{R^{3/2}}.
\end{equation}
Now \eqref{diffrange}, Lemma \ref{vandercorput} and \eqref{Tcardbound} yield the estimate
\begin{equation} \label{Sigma2bound}
\Sigma_2\ll N^{\eta}U^{-1}L^2H^2X^2d^{-2}\left(R^{1/4} \Delta^{3/2}+R^{3/4}\Delta^{1/2}\right).
\end{equation}
Combining \eqref{afterCaureduce}, \eqref{Sigma1bound} and \eqref{Sigma2bound}, we obtain
\begin{equation} \label{TQRbound}
T_{2,d}(Q,R)^2 \ll N^{2\eta}Q^2L^2H^2X^2d^{-2} \left(R^{3/2}K^{-2}L^{-2}+R^{3/2}L^{-1}H^{-1}X^{-1}d+R^{3/4}U^{1/2}+
R^{5/4}U^{-1/2}\right).
\end{equation}

Using $R\ll Y/(dK^2)$, $Q\ll K$, \eqref{Ldef2}, the definition of $U$ in \eqref{Udef}, and $XY=N$, we deduce from \eqref{TQRbound} that
\begin{equation} \label{TQbound}
\begin{split}
T_{2,d}(Q) \ll N^{\eta} \Big( QK^{-5/2}&Hd^{-7/4}NY^{-1/4} +Q^{3/2}K^{-3/2}Hd^{-3/4}N^{1/2+\gamma/2}Y^{-1/4}  \\
& + Q^{7/4}K^{-3/4}H^{2}d^{-1/2}N^{1+\gamma}Y^{-3/2} \Big).
\end{split}
\end{equation}
A similar estimate for $T_{1,d}(Q)$ can be established in essentially the same way. Hence, \eqref{whatwewant} holds if
\begin{equation} \label{whatwewant2}
\frac{H^{1/2}N^{1-\gamma/2}Y^{1/2}}{K^2d^{5/2}}+\frac{H^{1/2}N^{1/2}Y^{1/2}}{K^{1/2}d^{3/2}}+\frac{K^{1/2}H^{3/2}N^{1+\gamma/2}}{Y^{3/4}d^{5/4}}\ll \frac{N^{1-6\eta}}{d}.
\end{equation}

\section{Proof of the spacing lemma}

In this section, we provide a proof of Lemma~\ref{cardlemma}.

\begin{proof} (of Lemma~\ref{cardlemma}) Throughout, we assume that $l,\tilde l\sim c_2L$, $h,\tilde h\sim H$ and $m,\tilde m\sim X/d$ and ignore the condition $\gcd (l,k)= \gcd (\tilde l,k)=1$ in the definition of $\mathcal{T}(k,\Delta)$. \newline

We first observe that the inequality
\begin{equation} \label{spacing1}
\left(\frac{\tilde h d^{2\gamma}\tilde{m}^{\gamma}\gamma}{\tilde l/k}\right)^{\frac{1}{2(1-\gamma)}}\frac{1}{k} -
\left(\frac{hd^{2\gamma}m^{\gamma}\gamma}{l/k}\right)^{\frac{1}{2(1-\gamma)}}\frac{1}{k} \ll \Delta
\end{equation}
holds if
 \begin{equation} \label{spacing2}
\left(\frac{\tilde h d^{2\gamma}\tilde{m}^{\gamma}\gamma}{\tilde l/k}\right)^{\frac{1}{2(1-\gamma)}}\frac{1}{k}
\left(\left(\frac{hd^{2\gamma}m^{\gamma}\gamma}{l/k}\right)^{\frac{1}{2(1-\gamma)}}\frac{1}{k} \right)^{-1}-1 \ll \frac{\Delta}{U},
\end{equation}
with the implied $\ll$-constant in \eqref{spacing2} depending on the implied $\ll$-constant in \eqref{spacing1}. The inequality \eqref{spacing2} can be simplified into
\begin{equation} \label{spacing3}
\left(\frac{l\tilde h}{\tilde l h}\right)^{\frac{1}{2(1-\gamma)}} \left(\frac{\tilde{m}}{m}\right)^{\frac{\gamma}{2(1-\gamma)}}
-1 \ll \frac{\Delta}{U}
\end{equation}
which is satisfied if
\begin{equation} \label{spacing5}
\left(\frac{\tilde{m}}{m}\right)^{\frac{\gamma}{2(1-\gamma)}}-\left(\frac{\tilde l h}{l\tilde h}\right)^{\frac{1}{2(1-\gamma)}} \ll \frac{\Delta}{U},
\end{equation}
with the implied $\ll$-constant in \eqref{spacing5} depending on the implied $\ll$-constant in \eqref{spacing3}. Now the number of solutions to the inequality \eqref{spacing5} does not exceed the product of
$$\max\limits_{r\sim LH} d(r)^2$$
and the number of solutions to the inequality
\begin{equation} \label{spacing6}
\left(\frac{\tilde{m}}{m}\right)^{\frac{\gamma}{2(1-\gamma)}}-\left(\frac{\tilde r}{r}\right)^{\frac{1}{2(1-\gamma)}} \ll \frac{\Delta}{U},
\end{equation}
where $m,\tilde m\sim X/d$ and $r,\tilde r\asymp LH$.  Now using Lemma \ref{spacinglemma} and the bound $d(r) \ll r^{\varepsilon}$ for the divisor function, we obtain the desired result.
\end{proof}

\section{Estimation of $K_d$, second method}

We are now ready to formulate another condition under which the desired estimate $K_d\ll N^{1-3\eta}d^{-1}$ holds. This will be favorable in the situation when $Y$ is large.

\begin{lemma} \label{newestimateforKd}
We have
\begin{equation} \label{Kdinequality}
K_d \ll N^{1-3\eta}d^{-1},
\end{equation}
provided that $\gamma>1/2$, $1\le H\le N^{1-\gamma+\eta}$, $1\le d\le 2Y$ and
\begin{equation} \label{Kineq1}
\begin{split}
N^{26\eta}& \max\left\{1,\frac{H^{1/4}Y^{1/4}}{N^{\gamma/4}d^{3/4}},\frac{Y}{N^{\gamma}d}\right\}\\
& \ll \min\left\{\frac{Y^{1/2}}{H^{1/4}N^{\gamma/4}d^{1/2}}, \frac{Y^{3/4}}{H^{1/2} N^{\gamma/2}d^{3/4}}, \frac{Y^{3/4}}{HN^{\gamma/2}d^{1/4}}, \frac{Y^{7/12}}{H^{2/3}N^{\gamma/3}d^{1/4}},\frac{Y^{3/2}d^{1/2}}{H^3N^{\gamma}}\right\}.
\end{split}
\end{equation}
\end{lemma}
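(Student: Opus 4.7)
The plan is to exhibit a choice of the parameter $K$ (introduced in Section~\ref{splittingsection}) for which every constraint imposed on $K$ in the preceding sections is simultaneously satisfied; the conclusion $K_d \ll N^{1-3\eta}/d$ then follows by concatenating the estimates already proved, namely the splitting error \eqref{errorneg}, the Jutila-transformation error \eqref{Otermbound}, and the main-term reduction \eqref{whatwewant}--\eqref{whatwewant2}.

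First, I would collect the constraints on $K$. Control of the splitting error requires \eqref{Kcondition}. Applying Lemma~\ref{transformlemma} to the inner sum over $n$ requires \eqref{Kcondition2}. The $O$-terms produced by that transformation, aggregated as $E_d$ in \eqref{EdQdef}, are negligible when \eqref{Otermbound} holds; solving this for $K$ produces one lower bound (from the first summand of $E_d$) and two upper bounds (from the last two summands). Finally, the multilinear estimate \eqref{whatwewant2} yields one further lower bound (from its second summand, after inserting $H \le N^{1-\gamma+\eta}$) and one further upper bound (from its third summand).

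Translating each of these inequalities into a condition on $K$ and absorbing the $N^{O(\eta)}$ prefactors, the effective lower bounds on $K$ are $H^{1/4}Y^{1/4}/(N^{\gamma/4}d^{3/4})$, $Y/(N^{\gamma}d^{2})$, $Y^{1/2}/(dN^{\gamma/2})$, and $HY/(Nd)$, while the effective upper bounds are $Y^{1/2}/(H^{1/4}d^{1/2}N^{\gamma/4})$, $Y^{3/4}/(H^{1/2}d^{3/4}N^{\gamma/2})$, $Y^{3/4}/(HN^{\gamma/2}d^{1/4})$, $Y^{7/12}/(H^{2/3}N^{\gamma/3}d^{1/4})$, and $Y^{3/2}d^{1/2}/(H^{3}N^{\gamma})$. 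Using $H \le N^{1-\gamma+\eta}$ together with a short case-check, each of $Y/(N^{\gamma}d^{2})$, $Y^{1/2}/(dN^{\gamma/2})$, and $HY/(Nd)$ is dominated (up to $N^{O(\eta)}$) by $\max\{1,Y/(N^{\gamma}d)\}$; hence every lower bound is absorbed into the left-hand side of \eqref{Kineq1}, while every upper bound appears on its right. Therefore \eqref{Kineq1} is precisely the hypothesis needed to guarantee an admissible $K$, which may be chosen, for instance, as the geometric mean of the largest admissible lower bound and the smallest admissible upper bound.

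The main obstacle is not conceptual but purely accounting: one must track the cumulative losses of small powers of $N^{\eta}$ incurred at each step and verify they fit within the $N^{26\eta}$ safety margin of \eqref{Kineq1}. Once this is done, combining \eqref{errorneg}, \eqref{Otermbound}, \eqref{whatwewant}, and \eqref{whatwewant2} delivers the desired bound $K_d \ll N^{1-3\eta}/d$.
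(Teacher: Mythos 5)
Your proposal is correct and follows essentially the same route as the paper: both collect the constraints on $K$ from \eqref{Kcondition}, \eqref{Kcondition2}, \eqref{Otermbound} and \eqref{whatwewant2}, reduce the lower bounds (using $H\le N^{1-\gamma+\eta}$ and $d\ge 1$) to the maximum on the left of \eqref{Kineq1} and the upper bounds to the minimum on the right, and conclude that an admissible $K\ge 1$ exists exactly under \eqref{Kineq1}. The $\eta$-accounting also matches (the paper's intermediate factors $N^{12\eta}$ and $N^{14\eta}$ combine to the stated $N^{26\eta}$).
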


\begin{proof}
According to the results in sections 6 - 9, \eqref{Kdinequality} holds if there exists a real number $K\ge 1$ satisfying the conditions
\begin{equation} \label{Kcondition1}
 K \gg N^{4\eta} \frac{Y}{N^{\gamma}d^2},
\end{equation}
\begin{equation} \label{Kcondition3}
 N^{6\eta} \frac{Y^{1/2}}{dN^{\gamma/2}} \ll K \ll N^{-6\eta} \min \left\{ \frac{Y^{1/2}}{H^{1/4}d^{1/2}N^{\gamma/4}}, \frac{Y^{3/4}}{H^{1/2}d^{3/4} N^{\gamma/2}} \right\},
\end{equation}
\begin{equation}  \label{Otermbound1}
\frac{H^{1/2} N^{1-\gamma/2} Y^{1/2}}{K^2d^{5/2}} + \frac{H^2 N^{1+\gamma} K^2}{Y^{3/2} d^{1/2}} + \frac{H^2 N^{1+\gamma} K^3}{ Y^{7/4} d^{1/4}} \ll \frac{N^{1-5\eta}}{d}
\end{equation}
and
\begin{equation} \label{whatwewant3}
\frac{H^{1/2}N^{1-\gamma/2}Y^{1/2}}{K^2d^{5/2}}+\frac{H^{1/2}N^{1/2}Y^{1/2}}{K^{1/2}d^{3/2}}+\frac{K^{1/2}H^{3/2}N^{1+\gamma/2}}{Y^{3/4}d^{5/4}}\ll \frac{N^{1-6\eta}}{d}.
\end{equation}
The above inequalities \eqref{Kcondition1}, \eqref{Kcondition3}, \eqref{Otermbound1} and \eqref{whatwewant3} correspond to the inequalities \eqref{Kcondition}, \eqref{Kcondition2}, \eqref{Otermbound} and \eqref{whatwewant2}, respectively.
The terms in the minimum on the right-hand side of \eqref{Kcondition3}, the second and the third term on the left-hand side of \eqref{Otermbound1} and the last term on the left-hand side of \eqref{whatwewant3} lead to the condition
\begin{equation} \label{Bratwurst1}
K \ll  N^{-12\eta} \min \left\{\frac{Y^{1/2}}{H^{1/4}N^{\gamma/4}d^{1/2}}, \frac{Y^{3/4}}{H^{1/2} N^{\gamma/2}d^{3/4}}, \frac{Y^{3/4}}{HN^{\gamma/2}d^{1/4}}, \frac{Y^{7/12}}{H^{2/3}N^{\gamma/3}d^{1/4}},
\frac{Y^{3/2}d^{1/2}}{H^3N^{\gamma}}\right\}.
\end{equation}
The first term on the left-hand side of \eqref{Otermbound1} and the first two terms on the left-hand side of \eqref{whatwewant3} lead to the condition
\begin{equation} \label{Bratwurst2}
K\gg N^{12\eta}\max\left\{\frac{H^{1/4}Y^{1/4}}{N^{\gamma/4}d^{3/4}},\frac{HY}{Nd}\right\}.
\end{equation}
Using $H\le N^{1-\gamma+\eta}$, we observe that the lower bounds in \eqref{Kcondition1}, \eqref{Kcondition3} and \eqref{Bratwurst2} hold if
\begin{equation} \label{Bratwurst3}
K\gg N^{14\eta}\max\left\{1,\frac{H^{1/4}Y^{1/4}}{N^{\gamma/4}d^{3/4}},\frac{Y}{N^{\gamma}d}\right\}.
\end{equation}
Obviously, a number $K\ge 1$ satisfying \eqref{Bratwurst1} and \eqref{Bratwurst3} exists if \eqref{Kineq1} holds. This completes the proof.
\end{proof}

Lemma \ref{newestimateforKd} can be simplified into the following.

\begin{lemma} \label{Kdcondlemma2}
For every sufficiently small fixed $\eta>0$, we have
\[ K_d\ll N^{1-3\eta}d^{-1}, \]
provided that $\gamma>13/16$, $1\le H\le N^{1-\gamma+\eta}$, $1\le d\le 2Y$ and
$$
N^{\max\{2-4\gamma/3,13/5-2\gamma,6-6\gamma\}+1000\eta}\le Y\le 2N.
$$
\end{lemma}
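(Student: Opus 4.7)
The plan is to derive Lemma \ref{Kdcondlemma2} from Lemma \ref{newestimateforKd} by a routine case analysis of the system of monomial inequalities comprising \eqref{Kineq1}. The condition \eqref{Kineq1} is equivalent to the $3 \times 5 = 15$ pairwise inequalities $N^{26\eta}\,\text{LHS}_i \ll \text{RHS}_j$, each a monomial in $N$, $H$, $Y$, $d$. Since every $\text{RHS}_j$ involves a strictly negative power of $H$ while the $\text{LHS}_i$ carry only non-negative powers, the binding case is $H = N^{1-\gamma+\eta}$; the $\eta$-loss will be absorbed by the margin $N^{1000\eta}$ in the conclusion.

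After substituting $H = N^{1-\gamma}$ and examining the worst case for $d$ in each pairing, I expect the three dominant lower bounds on $Y$ to all arise from comparing the three $\text{LHS}_i$ terms against the single narrowest right-hand term $\text{RHS}_5 = Y^{3/2} d^{1/2}/(H^3 N^\gamma)$ (narrowest due to its large negative $H$-exponent), each worst at $d = 1$:
\begin{align*}
\text{LHS}_1 \le \text{RHS}_5 &\ \Longleftrightarrow\ Y \ge H^2 N^{2\gamma/3} = N^{2-4\gamma/3}, \\
\text{LHS}_2 \le \text{RHS}_5 &\ \Longleftrightarrow\ Y \ge H^{13/5} N^{3\gamma/5} = N^{13/5-2\gamma}, \\
\text{LHS}_3 \le \text{RHS}_5 &\ \Longleftrightarrow\ Y \ge H^6 = N^{6-6\gamma},
\end{align*}
which are exactly the three exponents in the $\max$ in the statement. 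These should be verified by straightforward algebra after clearing denominators in each case.

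Next I would check that the remaining nine lower-bound pairings $\text{LHS}_i \le \text{RHS}_j$ with $i \in \{1,2\}$ and $j \le 4$ yield lower bounds on $Y$ strictly dominated by one of the three above, via direct exponent comparisons using $0 < \gamma < 1$. The four upper-bound pairings $\text{LHS}_3 \le \text{RHS}_j$ with $j \in \{1,2,3,4\}$ give upper bounds on $Y$; the tightest independent of $H$, coming from $\text{LHS}_3 \le \text{RHS}_4$, reduces to $Y \le N^{(16\gamma-8)/5}$. For the upper cap $Y \le 2N$ in the statement to be consistent with this, one needs $(16\gamma-8)/5 \ge 1$, i.e., $\gamma \ge 13/16$, which is the origin of the threshold in the statement.

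The main obstacle is bookkeeping and edge cases. Some of the pairings involving $\text{LHS}_1 = 1$ against $\text{RHS}_j$ with $j < 5$ have their worst $d$ at $d = 2Y$ rather than $d = 1$, and for $d$ near the top of its range the condition \eqref{Kineq1} may fail; one then complements Lemma \ref{newestimateforKd} by the trivial bound $K_d \ll H N^{1+\varepsilon}/d^2$ (using $XY = N$ and the Deligne bound), which yields $K_d \ll N^{1-3\eta}/d$ as soon as $d \ge H N^{3\eta+\varepsilon}$. One then verifies that the hand-off between the two bounds at $d \asymp N^{1-\gamma+O(\eta)}$ sits comfortably inside the $Y$-range dictated by the three dominant constraints above, so that every $(H,d)$ with $H \le N^{1-\gamma+\eta}$ and $1 \le d \le 2Y$ is covered by one method or the other. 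No new analytic idea is required beyond this; the proof is entirely algebraic in spirit, paralleling the proof of Lemma \ref{Kdcondlemma}.
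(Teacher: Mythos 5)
Your proposal is correct and takes essentially the same route as the paper: the paper likewise splits off the range $d\ge N^{3\eta}H$ with the trivial bound $K_d\ll HN/d^2$ and, for smaller $d$, reduces \eqref{Kineq1} to lower bounds of the form $Y\ge N^{\max\{5/3-\gamma,\,2-4\gamma/3,\,11/4-5\gamma/2,\,13/5-2\gamma,\,6-6\gamma\}+1000\eta}$, of which exactly your three exponents survive the same domination checks. The only difference is presentational, in that the paper records the two dominated exponents explicitly before discarding them.
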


\begin{proof}
If $d\ge N^{3\eta}H$, then the desired estimate $K_d\ll N^{1-3\eta}d^{-1}$ follows from the trivial estimate $K_d\ll HXY/d^2= HN/d^2$. If $d\le N^{3\eta}H\le N^{1-\gamma+4\eta}$ and $\gamma>13/16$, then we use Lemma \ref{newestimateforKd}. In this case we calculate that \eqref{Kineq1} and hence $K_d\ll N^{1-3\eta}d^{-1}$ holds if
\begin{equation}
N^{\max\left\{\frac{5}{3}-\gamma,
2-\frac{4\gamma}{3},\frac{11}{4}-\frac{5\gamma}{2},\frac{13}{5}-2\gamma,6-6\gamma\right\}+1000\eta}\le Y\le 2N.
\end{equation}
The first term in the maximum is dominated by the second term if $\gamma<1$, and the third term is dominated by the fourth term if $\gamma>3/10$. This completes the proof.
\end{proof}

Combining the above Lemma \ref{Kdcondlemma2} with Lemmas \ref{Kdcondlemma0} and \ref{Kdcondlemma}, we arrive at the following conclusion.

\begin{lemma} \label{finalKdcondlemma}
For every sufficiently small fixed $\eta>0$, we have
\[ K_d\ll N^{1-3\eta}d^{-1}, \]
provided that $\gamma>7/8$, $1\le H\le N^{1-\gamma+\eta}$, $1\le d\le 2Y$ and
$$
N^{3-3\gamma+100\eta}\le Y\le 2N.
$$
\end{lemma}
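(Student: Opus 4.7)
The plan is to verify that for $\gamma > 7/8$, the $Y$-range $[N^{3-3\gamma+100\eta}, 2N]$ is covered by the union of the three $Y$-ranges treated by Lemmas \ref{Kdcondlemma}, \ref{Kdcondlemma0}, and \ref{Kdcondlemma2}, so that for each admissible $Y$ we may simply invoke whichever of these three lemmas applies to that value. Since $7/8 > 5/6$ and $7/8 > 13/16$, the hypothesis $\gamma > 7/8$ is strong enough to satisfy the $\gamma$-conditions in all three prior lemmas, and the hypotheses on $H$ and $d$ carry over verbatim.

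The three intervals to consider are $I_1 = [N^{3-3\gamma+100\eta}, N^{3\gamma-2-100\eta}]$ from Lemma \ref{Kdcondlemma}, $I_2 = [N^{5-5\gamma+100\eta}, N^{\gamma-100\eta}]$ from Lemma \ref{Kdcondlemma0}, and $I_3 = [N^{E+1000\eta}, 2N]$ from Lemma \ref{Kdcondlemma2}, where $E = \max\{2-4\gamma/3,\; 13/5-2\gamma,\; 6-6\gamma\}$. I would verify three overlap conditions. First, that the upper end of $I_1$ dominates the lower end of $I_2$, which amounts to $3\gamma - 2 > 5 - 5\gamma$, i.e.\ exactly $\gamma > 7/8$. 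Second, that the upper end of $I_2$ dominates the lower end of $I_3$, which requires $\gamma > E$; solving each candidate individually gives $\gamma > 6/7$ (from the first and third terms of the maximum) and $\gamma > 13/15$ (from the middle term), so the binding condition is $\gamma > 13/15 \approx 0.867$, which is automatic once $\gamma > 7/8$. Third, that the lower end of $I_1$ coincides with the lower end of the combined range, which is immediate from inspection.

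For sufficiently small fixed $\eta > 0$ the strict inequalities above absorb the $O(\eta)$ perturbations in the exponents (for instance, the first overlap condition becomes $8\gamma - 7 > 200\eta$, which holds for any $\gamma > 7/8$ and small enough $\eta$). Hence every $Y$ with $N^{3-3\gamma+100\eta} \le Y \le 2N$ lies in $I_1 \cup I_2 \cup I_3$, and the conclusion $K_d \ll N^{1-3\eta}/d$ follows by applying the relevant prior lemma in each case. There is no genuine obstacle here—the proof is pure bookkeeping—and the reason $7/8$ is precisely the threshold is that it is the tightest of the three overlap conditions: it controls the transition between Lemmas \ref{Kdcondlemma} and \ref{Kdcondlemma0}, both of which treat $\lambda(n)$ as arbitrary coefficients bounded by $n^{\varepsilon}$. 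The Hecke-eigenvalue structure, exploited through Jutila's transformation in Lemma \ref{Kdcondlemma2}, enters only at the large-$Y$ end of the range and is not the binding constraint for the value $7/8$.
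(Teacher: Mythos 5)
Your proposal is correct and follows exactly the paper's own argument: combine Lemmas \ref{Kdcondlemma}, \ref{Kdcondlemma0} and \ref{Kdcondlemma2} and check that their $Y$-ranges overlap, with the binding condition $3\gamma-2>5-5\gamma$ giving $\gamma>7/8$ and the remaining overlap conditions ($\gamma>6/7$ and $\gamma>13/15$) being weaker. Your closing remark on why $7/8$ is the threshold also matches the discussion in the paper's Notes section.
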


\begin{proof}
This follows from Lemmas \ref{Kdcondlemma0}, \ref{Kdcondlemma} and \ref{Kdcondlemma2} upon noting that $\gamma>2-4\gamma/3$ if $\gamma>6/7$, $\gamma>13/5-2\gamma$ if $\gamma>13/15$, $\gamma>6-6\gamma$ if $\gamma>6/7$, and $3\gamma-2>5-5\gamma$ if $\gamma>7/8$.
\end{proof}

\section{Proof of the main result \label{finalproof}}

\begin{proof} (of Theorems~\ref{mainresultmodified} and~\ref{mainresult}) We recall that Theorem~\ref{mainresultmodified} and hence Theorem~\ref{mainresult}, our main result, holds if \eqref{goal1} is valid for any $N\ge 1$ and $1\le H\le N^{1-\gamma+\eta}$. Here $\gamma$ is a fixed number in the range $7/8<\gamma<1$, and $\eta$ is sufficiently small, which we assume in the following. Furthermore, in Lemma \ref{bilinearsums} we formulated some conditions on bilinear sums $K_d$ and $L_d$ under which \eqref{goal1} holds. In the following, we check that these conditions are satisfied. \newline

We choose the parameters $u$, $v$ and $z$ in Lemma \ref{bilinearsums} as follows.
\begin{eqnarray*}
u&:=&N^{1-\gamma+100\eta},\\
v&:=&4N^{1/3},\\
z&:=&\left[N^{\gamma/2-100\eta}\right]+1/2.
\end{eqnarray*}
The parameters $u$, $v$ and $z$, so chosen, indeed satisfy the conditions in \eqref{uvzcond} if $\gamma>4/5$ and $\eta$ is sufficiently small. Moreover, the conditions \eqref{Kdbound} and \eqref{Ldbound} hold by Lemmas \ref{Ldcondlemma} and \ref{finalKdcondlemma} since $5\gamma-4>1/3$ if $\gamma>13/15$ (the exponent which marks the limit of the method of Liu-Rivat in \cite{LiRi}) and $3-3\gamma<\gamma/2$ if $\gamma>6/7$. This completes the proof.
\end{proof}

\section{Notes}

The upper bound $c<8/7$ in Theorem \ref{mainresult} is due to the inequality $3\gamma-2>5-5\gamma$, which is equivalent to $\gamma>7/8$, in the proof of Lemma \ref{finalKdcondlemma}. If this inequality is satisfied, then the $Y$-ranges in Lemmas \ref{Kdcondlemma0} and \ref{Kdcondlemma} overlap which is required in order to establish the $Y$-range in Lemma \ref{finalKdcondlemma}. For the $Y$-ranges in Lemmas \ref{Kdcondlemma0} and \ref{Kdcondlemma2} to overlap, which is also required, we only need the weaker condition $\gamma>13/15$. In the proof of the main result in the previous section, the strongest condition occurring is also $\gamma>13/15$. We believe that the $c$-range in Theorem \ref{mainresult} could be slightly widened by modifying the method in \cite{RiSa} to obtain a better lower bound for $Y$ in Lemma \ref{Kdcondlemma0} (which does not depend on the Hecke eigenvalues). However, we have not tried to do so since the main focus of this paper lies on the treatment of the Hecke eigenvalues. \newline

We further note that an improvement of the lower bound in Lemma \ref{Kdcondlemma0} would also correspond to a better upper bound for $Y$ in Lemma \ref{Ldcondlemma}, which would lead to a weakening of the condition $\gamma>13/15$ occurring in section \ref{finalproof}. However, this would be less significant since the condition $\gamma>13/15$ already occurs in the proof of Lemma \ref{finalKdcondlemma} and seems difficult to improve at this place. We point out that the last-mentioned condition is due to to the inequality $\gamma>13/5-2\gamma$ coming from the upper bound for $Y$ in Lemma \ref{Kdcondlemma0}, which is likely to be best possible, and from the lower bound for $Y$ in Lemma \ref{Kdcondlemma2}, which depends on our treatment of the Hecke eigenvalues. \newline

Besides slight improvements, it would be highly desirable to prove Conjecture~\ref{conj} for some $c$-range, making Theorem \ref{oscillations} unconditional for the same $c$-range.  To this end, we need estimates for exponential sums with squares of Hecke eigenvalues (or more generally, with Fourier coefficients of Rankin-Selberg convolutions of cusp forms). \newline

Finally, it would be interesting to generalize our result to cusp forms of arbitrary level. One would need to work out a generalization of Jutila's method to arbitrary levels for this purpose. \newline

\noindent{\bf Acknowledgments.} This project was started when S. B. visited the Division of Mathematical Sciences of Nanyang Technological University (NTU).  He wishes to thank the Division for its generous financial supports and warm hospitality.  S. B. further wishes to thank Jacobs University for providing excellent working conditions.  L. Z. was supported by an Academic Research Fund Tier 1 Grant at NTU during this work.

\bibliography{biblio}
\bibliographystyle{amsxport}

\vspace*{.5cm}

\noindent\begin{tabular}{p{8cm}p{8cm}}
School of Engineering \& Science, Jacobs Univ. & Div. of Math. Sci., School of Phys. \& Math. Sci., \\
P. O. Box 750561, 28725 Bremen, Germany & Nanyang Technological Univ., 637371 Singapore \\
Email: {\tt s.baier@jacobs-university.de} & Email: {\tt lzhao@pmail.ntu.edu.sg} \\
\end{tabular}

\end{document}